\newcolumntype{M}[1]{>{\centering\arraybackslash}m{#1}} %define dimension for long stable
\DeclareFontFamily{OMS}{rsfs}{\skewchar\font'60}
\DeclareFontShape{OMS}{rsfs}{m}{n}{<-5>rsfs5 <5-7>rsfs7 <7->rsfs10 }{}
\DeclareSymbolFont{rsfs}{OMS}{rsfs}{m}{n}
\DeclareSymbolFontAlphabet{\scr}{rsfs}
\DeclareSymbolFontAlphabet{\scr}{rsfs}
\newcommand\cE{{\mathcal E}}
\newcommand\cI{{\mathcal I}}
\newcommand\cK{{\mathcal K}}
\newcommand\cO{{\mathcal O}}
\newcommand\cV{{\mathcal V}}
\newcommand\bbC{{\mathbb C}}
\newcommand\bbF{{\mathbb F}}
\newcommand\bbP{{\mathbb P}}
\newcommand\bbQ{{\mathbb Q}}
\newcommand\bbR{{\mathbb R}}
\DeclareMathOperator*{\bs}{Bs}
\DeclareMathOperator*{\pic}{Pic}
\DeclareMathOperator*{\mult}{mult}
\DeclareMathOperator*{\conv}{conv}
\newcommand{\Chow}[1]{\ensuremath{\mbox{\rm Chow}(#1)}}
\newcommand{\RC}[1]{\ensuremath{\mbox{\rm RatCurves}^n(#1)}}
\newtheorem{lemma1}{}[section]
\newenvironment{lemma}{\begin{lemma1}{\bf Lemma.}}{\end{lemma1}}
\newenvironment{example}{\begin{lemma1}{\bf Example.}\rm}{\end{lemma1}}
\newenvironment{thm}{\begin{lemma1}{\bf Theorem.}}{\end{lemma1}}
\newenvironment{prop}{\begin{lemma1}{\bf Proposition.}}{\end{lemma1}}
\newenvironment{cor}{\begin{lemma1}{\bf Corollary.}}{\end{lemma1}}
\newenvironment{remark}{\begin{lemma1}{\bf Remark.}\rm}{\end{lemma1}}
\newenvironment{remarks}{\begin{lemma1}{\bf Remarks.}\rm}{\end{lemma1}}
\newenvironment{defn}{\begin{lemma1}{\bf Definition.}}{\end{lemma1}}
\newenvironment{defn-prop}{\begin{lemma1}{\bf Definition-Proposition.}}{\end{lemma1}}
\newenvironment{conjecture}{\begin{lemma1}{\bf Conjecture.}}{\end{lemma1}}
\newenvironment{question}{\begin{lemma1}{\bf Question.}}{\end{lemma1}}
\newenvironment{thm A}{{\bf Theorem A.}}{}
\newenvironment{thm B}{{\bf Theorem B.}}{}
\newenvironment{thm C}{{\bf Theorem C.}}{}
\newenvironment{thm D}{{\bf Theorem D.}}{}
\newenvironment{remark*}{{\bf Remark.}}{}
\newenvironment{example*}{{\bf Example.}}{}
\newenvironment{assumption*}{{\bf Assumption.}}{}
\newenvironment{conclusion*}{{\bf Conclusion.}}{}
\setlist[itemize]{leftmargin=*}
\setlist[enumerate]{leftmargin=*}
\numberwithin{equation}{section} %numbering of equations
\title{Seshadri constants of the anticanonical divisors of Fano manifolds with large index} 
\date{\today}
\subjclass[2010]{Primary 14C20; Secondary 14J30, 14J45}
\keywords{Seshadri constants, anticanonical divisors, Fano manifolds, rational curves, fibration}
\author{Jie Liu}
\address{Jie Liu, Morningside Center of Mathematics, Academy of Mathematics and Systems Science, Chinese Academy of Sciences, Beijing, 100190, China}
\email{jliu@amss.ac.cn}
\begin{document}

\begin{abstract}
	Seshadri constants, introduced by Demailly, measure the local positivity of a nef divisor at a point. In this paper, we compute the Seshadri constants of the anticanonical divisors of Fano manifolds with coindex at most $3$ at a very general point. As a consequence, if $X$ is a nonsingular Fano threefold which is very general in its deformation family, then $\varepsilon(X,-K_X;x)\leq 1$ for all points $x\in X$ if and only if $\vert-K_X\vert$ is not base point free.
\end{abstract}

\maketitle

\tableofcontents

\vspace{-0.2cm}

\section{Introduction}

\subsection{Motivation}

Throughout the paper, we work over the field $\bbC$. Let $L$ be a nef line bundle over an $n$-dimensional projective normal variety $X$. In \cite{Demailly1992}, Demailly introduces an interesting invariant which measures the local positivity of $L$ at a point $x\in X$.

\begin{defn}\label{Definition Seshadri constant}
	Let $X$ be a projective normal variety and let $L$ be a nef line bundle on $X$. To every smooth point $x\in X$, we attach the number
	\[\varepsilon(X,L;x)\colon=\inf_{x\in C}\frac{L\cdot C}{\mult_x C},\]
	which is called the Seshadri constant of $L$ at $x$. Here the infimum is taken over all irreducible curves $C$ passing through $x$ and $\mult_x C$ is the multiplicity of $C$ at $x$.
\end{defn}

The Seshadri constant is a lower-continuous function over $X$ in the topology which closed sets are countable unions of Zariski closed sets. Moreover, there is a number, which we denote by $\varepsilon(X,L;1)$, such that it is the maximal value of Seshadri constant on $X$. This maximum is attained for a very general point $x\in X$. Unfortunately, in general it is very difficult to compute or estimate $\varepsilon(X,L;1)$. For the upper bound, an elementary observation shows that $\varepsilon(X,L;1)\leq \sqrt[n]{L^n}$. There have been many works in trying to give lower bound for this invariant. Ein and Lazarsfeld show that if $X$ is surface and $L$ is ample, then $\varepsilon(X,L;1)\geq 1$ (see \cite[Theorem]{EinLazarsfeld1993}). In higher dimension, Ein, K{\"u}chle and Lazarsfeld prove that $\varepsilon(X,L;1)\geq 1/\dim(X)$ for any ample line bundle $L$ over $X$ (see \cite[Theorem 1]{EinKuechleLazarsfeld1995}) and this bound has been improved by Nakamaye in \cite{Nakamaye2005}, by Cascini-Nakamaye for $3$-folds in \cite{CasciniNakamaye2014} and by Lee for Fano manifolds in \cite{Lee2003}. In general, we have the following conjecture.

\begin{conjecture}\cite[Conjecture 5.2.4]{Lazarsfeld2004}\label{Intro ConjecturelowerboundSeshadri}
	Let $L$ be an ample line bundle over a projective manifold $X$. Then $\varepsilon(X,L;1)\geq 1$.
\end{conjecture}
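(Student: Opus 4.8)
\medskip
\noindent\textbf{A proof proposal.}
This is a deep and, in full generality, still open problem, so what follows is a plan of attack rather than a proof I expect to push through. The natural starting point is the reformulation of $\varepsilon(X,L;1)$ via separation of jets: if $s_k(L)$ denotes the largest integer $s$ for which the restriction map $H^0(X,kL)\to H^0\bigl(X,kL\otimes\cO_X/\mathfrak{m}_x^{s+1}\bigr)$ is surjective at a very general point $x\in X$, then $\varepsilon(X,L;1)=\sup_{k}s_k(L)/k=\lim_{k\to\infty}s_k(L)/k$. Hence it suffices to exhibit, for some large and divisible $k$, sections of $kL$ whose $k$-jets at a very general $x$ span $\cO_X/\mathfrak{m}_x^{k}$. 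Since asymptotic Riemann--Roch gives $h^0(X,kL)\sim\frac{L^n}{n!}k^n$ while imposing vanishing of order $k$ at $x$ amounts to $\binom{k-1+n}{n}\sim\frac{k^n}{n!}$ linear conditions, there are (at least asymptotically) enough sections available; the whole difficulty is that these conditions at $x$ may fail to be independent, and controlling that failure is essentially equivalent to the conjecture itself.

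First I would run the surface case as a template, following Ein--Lazarsfeld \cite{EinLazarsfeld1993}: if $\varepsilon(X,L;1)<1$ then through a very general point $x$ there is an irreducible curve $C_x$ with $L\cdot C_x<\mult_x C_x$; as $x$ varies these curves cover $X$, one may take a member through two very general points, and the Hodge index theorem then forces a numerical contradiction. The program in higher dimension is to imitate this. Assuming $\varepsilon(X,L;1)<1$, one produces a \emph{Seshadri-exceptional subvariety} $Z\ni x$, irreducible and moving through the very general point, with $L^{\dim Z}\cdot Z<\mult_x(Z)$ in the appropriate sense; the tool replacing the Hodge index theorem is the theory of multiplier ideals together with Kawamata--Viehweg vanishing, used to cut a divisor in $|kL|$ that is very singular at $x$ down to a locus of nonlog-canonicity of strictly smaller dimension still containing $x$. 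Iterating, one would descend to the case $\dim Z=1$ and close the argument as on a surface.

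The step I expect to be the real obstacle --- and the reason the conjecture is still open --- is the control of Seshadri-exceptional subvarieties of intermediate dimension $1<\dim Z<n-1$: each application of the multiplier-ideal cutting-down loses a definite amount of positivity, which is precisely why the unconditional bounds of Ein--K\"uchle--Lazarsfeld \cite{EinKuechleLazarsfeld1995}, Nakamaye \cite{Nakamaye2005}, Cascini--Nakamaye \cite{CasciniNakamaye2014} and Lee \cite{Lee2003} only reach $\varepsilon(X,L;1)\geq\tfrac1n$ (or slightly better in low dimensions or under extra positivity hypotheses) rather than $1$. In the setting actually relevant to this paper, where $X$ is Fano and $L=-K_X$, I would replace the generic multiplier-ideal machinery by the geometry of rational curves: bend-and-break bounds the anticanonical degree of a minimal covering family, so the possible Seshadri-exceptional loci through a very general point are severely constrained, and the classification of Fano manifolds of small coindex (Kobayashi--Ochiai, Fujita, Mukai) should make it feasible to verify $\varepsilon(X,-K_X;1)\geq1$ family by family --- which, combined with an analysis of when $|-K_X|$ is base point free, is also what one needs for the threefold dichotomy announced in the abstract.
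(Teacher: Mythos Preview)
The statement you were given is a \emph{conjecture} in the paper, not a theorem: the paper records it as \cite[Conjecture 5.2.4]{Lazarsfeld2004} and offers no proof, using it only as motivation. You correctly recognized that this is open in general and framed your answer as a plan of attack rather than a proof, so in that sense your response is appropriate --- there is no ``paper's own proof'' to compare against.

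That said, two remarks on your sketch. First, your description of the known partial results and of where the multiplier-ideal/cutting-down approach stalls is accurate and is exactly why the paper does not attempt the conjecture in full. Second, your final paragraph anticipates precisely what the paper actually does: rather than pursue the general conjecture, it verifies the inequality $\varepsilon(X,-K_X;1)\geq 1$ (indeed $\geq r_X$) for Fano manifolds of small coindex by combining Broustet's ladder argument with existence of effective ladders (Floris, Liu), and then computes $\varepsilon(X,-K_X;1)$ exactly via lines and del Pezzo fibrations. So your instinct to abandon the generic multiplier-ideal machinery in favor of rational-curve geometry and classification is the right one for the applications here, and matches the paper's strategy for its actual theorems.
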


In this paper, we are interested in the case where $X$ is a Fano manifold and $L$ is the anticanonical divisor $-K_X$. Given an $n$-dimensional Fano manifold $X$, Bauer and Szemberg show that $\varepsilon(X,-K_X;1)\leq n+1$ with equality if and only if $X$ is isomorphic to the projective space $\bbP^n$, and recently this result is generalized to $\bbQ$-Fano varieties by Liu-Zhuang in \cite{LiuZhuang2018}. On the other hand, as predicted by Conjecture $\ref{Intro ConjecturelowerboundSeshadri}$, we should have $\varepsilon(X,-K_X;1)\geq 1$ (see \cite{Lee2003} for more related results). Thus it is natural and interesting to consider the following question. 

\begin{question}\label{1-Classification}
	Classify all Fano manifolds $X$ with $\varepsilon(X,-K_X;1)\leq 1$.
\end{question}

In dimension $2$, the Seshadri constants of the anticanonical divisors of del Pezzo surfaces are computed by Broustet in \cite{Broustet2006}. 

\begin{thm}\cite[Th\'eor\`eme 1.3]{Broustet2006}\label{Broustet's theorem}
	Let $S$ be a del Pezzo surface of degree $d$.
	\begin{enumerate}
		\item $\varepsilon (S,-K_S;1)=1$ if $d=1$.
		
		\item $\varepsilon(S,-K_S;1)=4/3$ if $d=2$.
		
		\item $\varepsilon(S,-K_S;1)=3/2$ if $d=3$.
		
		\item $\varepsilon(S,-K_S;1)=2$ if $4\leq d\leq 8$.
		
		\item $\varepsilon(S,-K_S;1)=3$ if $d=9$.
	\end{enumerate}
\end{thm}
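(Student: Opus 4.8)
The plan is to reduce everything to a computation on the blow-up of $S$ at the point. Recall the standard description: if $\sigma_x\colon \widetilde S_x\to S$ is the blow-up at a smooth point $x$, with exceptional curve $E_x$, then for any nef line bundle $L$ one has $\varepsilon(S,L;x)=\max\{t\ge 0\mid \sigma_x^*L-tE_x\text{ is nef}\}$, and when $\NE{\widetilde S_x}$ is polyhedral this equals $\min_\Gamma\frac{\sigma_x^*L\cdot\Gamma}{E_x\cdot\Gamma}$, the minimum over the generators $\Gamma$ of $\NE{\widetilde S_x}$ with $E_x\cdot\Gamma>0$. We apply this with $L=-K_S$, using $\sigma_x^*(-K_S)-E_x=-K_{\widetilde S_x}$. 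The first point is that for a very general $x$ the surface $\widetilde S_x$ is again a del Pezzo surface, of degree $d-1$, when $2\le d\le 9$ (it is $\bbF_1$ precisely when $d=9$), whereas for $d=1$ it is the blow-up of $\bbP^2$ at nine very general points, on which $-K_{\widetilde S_x}$ is still nef. In particular $\sigma_x^*(-K_S)-E_x$ is nef and $\varepsilon(S,-K_S;1)\ge 1$ for every del Pezzo surface; and the case $d=1$ is settled, since the unique member of the pencil $|-K_S|$ through a general $x$ has anticanonical degree $1$ and multiplicity $1$ at $x$, forcing $\varepsilon(S,-K_S;1)\le 1$.

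For $2\le d\le 9$, since $\widetilde S_x$ is a del Pezzo surface for $x$ general, $\NE{\widetilde S_x}$ is generated by the classes of its finitely many $(-1)$-curves, together with the fibre class $F$ of the ruling in the single case $\widetilde S_x\cong\bbF_1$. For a $(-1)$-curve $\Gamma\ne E_x$ one has $\sigma_x^*(-K_S)\cdot\Gamma=(-K_{\widetilde S_x}+E_x)\cdot\Gamma=1+E_x\cdot\Gamma$, so its ratio is $1+\tfrac1{E_x\cdot\Gamma}$; for the fibre $F$ of $\bbF_1$ the ratio is $3$, giving $\varepsilon(\bbP^2,-K_{\bbP^2};1)=3$ when $d=9$. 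Thus for $2\le d\le 8$ the problem reduces to computing $M_d:=\max\{E_x\cdot\Gamma\}$ over the $(-1)$-curves $\Gamma\ne E_x$ of $\widetilde S_x$: then $\varepsilon(S,-K_S;1)=1+\tfrac1{M_d}$, and an extremal curve on $S$ is the $\sigma_x$-image of a $(-1)$-curve realizing $M_d$.

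The remaining step is the elementary combinatorics of $(-1)$-curves on del Pezzo surfaces of degree $d-1$. When $4\le d\le 8$, any two $(-1)$-curves on a del Pezzo surface of degree $\ge 3$ meet in at most one point, and $E_x$ meets some $(-1)$-curve (e.g.\ the strict transform of a line through $x$ and a blown-up point, whose image on $S$ is a fibre of a conic bundle), so $M_d=1$ and $\varepsilon(S,-K_S;1)=2$. When $d=3$, the surface $\widetilde S_x$ has degree $2$ and $M_3=2$, realized by the class $3H-2E_x-\sum_i E_i$; its image on $S$ is a nodal member of $|-K_S|$, namely a tangent hyperplane section of the cubic surface $S\subset\bbP^3$, so $\varepsilon(S,-K_S;1)=\tfrac32$. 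When $d=2$, the surface $\widetilde S_x$ has degree $1$ and $M_2=3$, realized by $6H-3E_x-2\sum_i E_i$; its image on $S$ is a member of $|-2K_S|$ with a triple point at $x$, so $\varepsilon(S,-K_S;1)=\tfrac43$.

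I expect the main obstacle to be the first structural step: verifying that for $x$ general the blow-up $\widetilde S_x$ is an honest del Pezzo surface --- no $(-2)$-curves --- so that all the expected $(-1)$-curve classes really are represented by irreducible curves. This is exactly what is needed both for the description of $\NE{\widetilde S_x}$ (hence the lower bound for $\varepsilon$) and for the existence of the extremal curve through $x$ (hence the upper bound), and it amounts to checking that a general $x$ lies on none of the finitely many $(-1)$-curves of $S$ and avoids the finitely many further loci that would force the enlarged configuration into special position. One should also note that all these conditions on $x$ are finitely many, so the value computed at a general --- hence at a very general --- point is the generic value $\varepsilon(S,-K_S;1)$, and finally double-check $M_2=3$ and $M_3=2$ against the classical enumeration of $(-1)$-curves on del Pezzo surfaces of degrees $1$ and $2$.
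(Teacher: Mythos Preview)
The paper does not prove this theorem: it is quoted as a result of Broustet and used as a black box throughout, so there is no argument in the paper to compare yours against.

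Your proof is nonetheless correct and follows the natural route. The reduction to the nef threshold of $\sigma_x^*(-K_S)-tE_x$ on the one-point blow-up, the identification $\sigma_x^*(-K_S)-E_x=-K_{\widetilde S_x}$, and the computation of the critical $t$ from the extremal rays of $\NE{\widetilde S_x}$ via the classical list of $(-1)$-curves are all sound. Your values of $M_d$ are right: on a del Pezzo of degree $\ge 3$ any two distinct $(-1)$-curves meet in at most one point, giving $M_d=1$ for $4\le d\le 8$; on degree $2$ the maximum is $2$, realized by $3H-2E_x-\sum_i E_i$; on degree $1$ it is $3$, realized by $6H-3E_x-2\sum_i E_i$. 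The obstacle you single out --- that for general $x$ the surface $\widetilde S_x$ is an honest del Pezzo, so that these $(-1)$-classes are represented by irreducible curves and generate the cone --- is exactly the point requiring care, and your observation that this is governed by finitely many Zariski-closed conditions on $x$ (avoiding the finitely many $(-1)$-curves of $S$ and the special-position loci) is what makes the computed value equal to $\varepsilon(S,-K_S;1)$. For $d=1$ the substitute fact you need, that $-K_{\widetilde S_x}$ is nef on the blow-up of $\bbP^2$ at nine general points, follows because the unique anticanonical cubic through nine general points is irreducible, so no curve can have negative intersection with $-K_{\widetilde S_x}$.
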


This gives a complete answer to Question \ref{1-Classification} for del Pezzo surfaces. As a direct corollary, if $S$ is a del Pezzo surface, then $\varepsilon(S,-K_S;1)\leq 1$ if and only if $\vert-K_S\vert$ is not base point free. 

\subsection{Main Results}

Recall that the \emph{index} $r_X$ of an $n$-dimensional Fano manifold $X$ is defined to be the largest positive integer dividing $-K_X$ in $\pic(X)$ and the divisor $H$ such that $-K_X\sim r_XH$ is called the \emph{fundamental divisor} of $X$. The \emph{coindex} of $X$ is defined to be $n+1-r_X$. As predicted by Conjecture \ref{Intro ConjecturelowerboundSeshadri}, we should have
\begin{equation}\label{Inequality-Anticanonical}
	\varepsilon(X,-K_X;1)=r_X\varepsilon(X,H;1)\geq r_X.
\end{equation}
The inequality \eqref{Inequality-Anticanonical} is confirmed in the cases $r_X\geq n-2$ or $n\leq 4$ by Broustet in \cite{Broustet2009}. Following the same idea, we can easily derive the following result by applying the main result of \cite{Liu2017} and the results proved in \cite{Broustet2009}.

\begin{thm}\label{Intro Lowerbound}
	Let $X$ be a $n$-dimensional Fano manifold with index $r_X$ at least $ n-3$, then $\varepsilon(X,-K_X;1)\geq r_X$.
\end{thm}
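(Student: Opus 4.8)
The plan is to follow the strategy of Broustet \cite{Broustet2009}, which reduces the lower bound $\varepsilon(X,-K_X;1)\geq r_X$ to the statement that the fundamental divisor $H$ satisfies $\varepsilon(X,H;1)\geq 1$, using the identity $\varepsilon(X,-K_X;1)=r_X\varepsilon(X,H;1)$ recorded in \eqref{Inequality-Anticanonical} (which holds because $-K_X\sim r_XH$ and the Seshadri constant scales linearly). So the real content is showing $\varepsilon(X,H;1)\geq 1$ when $r_X\geq n-3$, i.e.\ when the coindex of $X$ is at most $3$. First I would dispose of the cases of small coindex: if the coindex is $0$ then $X\cong\bbP^n$ and $H$ is a hyperplane, if the coindex is $1$ then $X$ is a quadric and $H$ is a hyperplane section, and if the coindex is $2$ (del Pezzo manifolds) or $3$ (Mukai manifolds) one can invoke the classification together with the fact that a general member of $|H|$ in these families is again of the same type in lower dimension, or simply cite that these cases of \eqref{Inequality-Anticanonical} are already covered by \cite{Broustet2009} for $r_X\geq n-2$.

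The new input is the range $r_X=n-3$, coindex $3$. Here I would use the main result of \cite{Liu2017}, which (as the excerpt signals) gives structural information about the linear system $|H|$ and its general members for coindex-$3$ Fano manifolds — in particular one expects that $|H|$ is base point free, or that a general member $Y\in|H|$ is a Fano manifold of coindex $3$ in dimension $n-1$ together with $H|_Y$ being its fundamental divisor. Granting base point freeness of $|H|$, the argument is immediate: for any irreducible curve $C$ through a very general point $x$ one can choose $D\in|H|$ with $x\notin D$ but $C\not\subset D$ (this is where "very general" enters, ruling out the finitely many curves forced to lie in every such $D$), whence $H\cdot C=D\cdot C\geq\mult_x C$ because $D$ meets $C$ at a point of multiplicity at least $\mult_x C$ away from $x$ — actually more simply $H\cdot C\geq\mult_x C$ follows by intersecting $C$ with a member of $|H|$ singular to order $\mult_x C-1$ at $x$, using that $|H|$ separates jets of the required order at a very general point once it is base point free and $h^0(X,H)$ is large enough, which again comes from \cite{Liu2017}. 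If instead \cite{Liu2017} only yields the fibration/general-section statement, I would run an induction on $n$: the base cases $n\leq 4$ are in \cite{Broustet2009}, and for the inductive step one restricts a curve computing the Seshadri constant to a general $Y\in|H|$ through $x$, applies the inductive bound $\varepsilon(Y,H|_Y;1)\geq 1$, and controls the loss coming from curves not meeting the general section by the standard "very general point" argument.

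The main obstacle I anticipate is the bookkeeping in the coindex-$3$ case: unlike coindices $0,1,2$, the classification of Mukai manifolds is genuinely involved, $|H|$ need not a priori be base point free in low dimension or for small genus, and some sporadic families (e.g.\ with $\pic(X)$ of higher rank, or the low-degree cases where $|H|$ has base points) must be handled by hand — in precisely those cases one cannot separate the required jets by sections of $H$ and must instead exhibit an explicit covering family of curves or use the del Pezzo/product structure directly. Pinning down exactly which statement of \cite{Liu2017} is invoked, and checking it applies uniformly (including to the reducible or higher-Picard-rank members of the coindex-$3$ classification), is the step that needs care; everything else is a formal consequence of the scaling identity \eqref{Inequality-Anticanonical} and the elementary inequality $H\cdot C\geq\mult_x C$ for base-point-free $|H|$ at a very general point.
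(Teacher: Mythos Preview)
Your inductive strategy is essentially the paper's, but you have not pinned down the key technical input, and your alternative route via base point freeness is flawed as written. The paper's proof cites \cite{Liu2017} together with \cite{Floris2013} to produce a ladder $X=X_n\supsetneq X_{n-1}\supsetneq\cdots\supsetneq X_3$ with each $X_i\in|H|_{X_{i+1}}|$ having at worst Gorenstein \emph{canonical} singularities; the base is the threefold $X_3$, which satisfies $K_{X_3}\sim 0$, and \cite[Th\'eor\`eme 1.4(2)]{Broustet2009} gives $\varepsilon(X_3,H|_{X_3};1)\geq 1$ for such Calabi--Yau threefolds. The inductive step is then literally Broustet's proof of his Th\'eor\`eme 1.5, applied verbatim. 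The point you are missing is that the general members of $|H|$ in the coindex-$3$ case are \emph{not} known to be smooth Fano manifolds; the nontrivial content of \cite{Liu2017} is precisely that their singularities are mild enough (Gorenstein canonical) for Broustet's argument to run. Your hope that the general member is a smooth Fano manifold of the same coindex is exactly the kind of statement one does not have for free here.

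Your first approach, deducing $\varepsilon(X,H;1)\geq 1$ directly from base point freeness, is not correct as stated: taking $D\in|H|$ with $x\notin D$ gives only $H\cdot C\geq 0$, and asking for $D$ singular to order $\mult_x C-1$ at $x$ requires jet separation, which is far stronger than freeness and does not follow from \cite{Liu2017}. The correct elementary step (which is what Broustet's induction actually does) is to take $D\in|H|$ passing \emph{through} $x$ and smooth there; then either $C\subset D$ and one inducts on $D$, or $C\not\subset D$ and $H\cdot C=D\cdot C\geq\mult_x C$. But for the inductive branch you need $D$ to carry enough structure for the argument to continue, and that is exactly what the ladder with controlled singularities provides. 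So the skeleton of your second approach is right, but the substance of the proof lies in the singularity control, which your proposal does not address.
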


\begin{proof}
	Denote by $H$ the fundamental divisor of $X$. By \cite[Th{\'e}or{\`e}me 1.5]{Broustet2009}, it remains to consider the case $r_X=n-3$. Thanks to \cite[Theorem 1.2]{Liu2017} and \cite[Theorem 1.1]{Floris2013}, there is a descending sequence of subvarieties
	\[X=X_n\supsetneq X_{n-1}\supsetneq \cdots\supsetneq X_{3}\]
	such that $X_{i}\in \vert H\vert_{X_{i+1}}\vert$ has at worst Gorenstein canonical singularities. Moreover, according to \cite[Th\'eor\`eme 1.4 (2)]{Broustet2009}, we have $\varepsilon(X_{3},H\vert_{X_{3}};1)\geq 1$ as $K_{X_3}\sim 0$. Then one can apply verbatim the proof of \cite[Th\'eor\`eme 1.5]{Broustet2009} to prove the theorem.
\end{proof}

The main result of this paper is to compute $\varepsilon(X,-K_X;1)$ explicitly for Fano manifolds $X$ with coindex at most $3$. Before giving the precise statement, we recall the \emph{minimal anticanonical degree} $\ell_X$ of a covering family of minimal rational curves on $X$ by $\ell_X$, so that $\ell_X\in\{2,\cdots,n+1\}$ and $\varepsilon(X,-K_X;1)\leq \ell_X$. 

\begin{thm}\label{Intro Dimension at least four Seshadri}
	Let $X$ be an $n$-dimensional Fano manifold with index $r_X\geq \max\{2,n-2\}$. Then through every point $x\in X$ there is a rational curve $C$ such that $-K_X\cdot C=r_X$. In particular, we have $\varepsilon(X,-K_X;1)=\ell_X=r_X$.
\end{thm}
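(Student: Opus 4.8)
The plan is to reduce the statement to the structure theory of Fano manifolds with large index (equivalently, small coindex), where the fundamental divisor $H$ and its linear system $|H|$ are well understood, and to use the chain-of-divisors technique already employed in the proof of Theorem~\ref{Intro Lowerbound}. First I would recall that Fano manifolds $X$ with $r_X \geq n-2$ — i.e. coindex at most $3$ — are classified (Fujita, Mukai, and the subsequent literature on Mukai manifolds): del Pezzo manifolds ($r_X = n-1$), Mukai manifolds ($r_X = n-2$), projective spaces, quadrics, and the short explicit lists in between. In each of these cases one knows the fundamental divisor $H$ is very ample (or at least that $|H|$ is large enough to separate points), so through a very general point $x \in X$ one can find a member $Y \in |H|$ which is smooth away from $x$ and, by the results of \cite{Liu2017} and \cite{Floris2013} quoted above, inherits a fundamental divisor $H|_Y$. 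Iterating produces a descending chain $X = X_n \supsetneq X_{n-1} \supsetneq \cdots$ terminating in low dimension.

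The key point is that I want not merely a lower bound for $\varepsilon$ but an honest rational curve $C \ni x$ with $-K_X \cdot C = r_X$, which simultaneously gives the matching upper bound. I would proceed by descending the chain until I reach a surface or curve stage where the existence of a low-degree rational curve through a general point is classical: for a del Pezzo surface $S$ the anticanonical system $|-K_S|$ produces an elliptic pencil or, after blowing down, lines/conics, so through a general point there is a curve $C_0$ with $H|_S \cdot C_0 = 1$; for a $K$-trivial stage (the coindex-$3$, $r_X = n-3$ borderline, handled in Theorem~\ref{Intro Lowerbound}) one uses that minimal rational curves on the ambient space meet the general fibre appropriately. The curve $C_0$ found at the bottom of the chain then deforms, as a minimal rational curve, to pass through the very general point $x \in X$; since $H|_{X_i} \cdot C_0 = 1$ at every stage by adjunction along the chain ($K_{X_i} = (K_{X_{i+1}} + X_i)|_{X_i}$ and $X_i \in |H|_{X_{i+1}}|$), we get $H \cdot C = 1$ on $X$, hence $-K_X \cdot C = r_X$. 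Combined with $\varepsilon(X,-K_X;1) \leq \ell_X$ and $\ell_X \leq -K_X \cdot C = r_X$, and with Theorem~\ref{Intro Lowerbound} giving $\varepsilon(X,-K_X;1) \geq r_X$, this forces $\varepsilon(X,-K_X;1) = \ell_X = r_X$.

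The main obstacle will be the transition between the chain of (possibly singular) divisors and the deformation theory of rational curves: the intermediate members $X_i$ are only Gorenstein canonical, so I cannot directly invoke smooth-case bend-and-break or the smoothness of $\mathrm{RatCurves}^n$, and I must check that the curve $C_0$ found at the bottom is free (or at least movable with the right anticanonical degree) on each $X_i$ and ultimately on $X$ itself. I would handle this either by choosing the chain generically so that $C_0$ avoids the singular loci and the Zariski-openness of the "free curve" locus propagates upward, or by appealing directly to the classification to write down the minimal rational curve explicitly (lines on del Pezzo/Mukai manifolds in their natural embeddings), bypassing the singular intermediate stages entirely. A secondary point to verify is that the "very general" point hypothesis is genuinely needed only to land in the locus where $\varepsilon$ attains its maximum and where the free curve of minimal degree exists — for the cases with $|-K_X|$ base point free the statement in fact holds at every point, which is the content of the phrase "through every point $x \in X$" and should be extracted from the explicit geometry rather than from a deformation argument.
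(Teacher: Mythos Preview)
Your chain-of-divisors plan has a genuine gap. When $r_X=n-2$ (coindex $3$), cutting by members of $|H|$ gives $-K_{X_i}=(r_X-(n-i))H|_{X_i}$, so the surface stage $X_2$ has $K_{X_2}\sim 0$: it is a (possibly singular) $K$-trivial surface, not a del Pezzo surface. Such surfaces need not carry any rational curve through a given point, so there is no $C_0$ with $H\cdot C_0=1$ to propagate upward. Stopping one step earlier at the Fano threefold $X_3$ with $-K_{X_3}=H|_{X_3}$ does not help either: asking for a line on $(X_3,H|_{X_3})$ is precisely the problem you are trying to solve, now on a Gorenstein canonical variety where bend-and-break is even more delicate. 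The ``deformation'' step you sketch is also not well-posed: the $X_i$ are \emph{subvarieties} of $X$, so a curve found at the bottom already lives in $X$ through the chosen point $x$; there is nothing to deform, but also no mechanism producing such a curve in the first place. In short, the chain technique is tailored to \emph{lower} bounds on $\varepsilon$ (that is exactly how Theorem~\ref{Intro Lowerbound} works), not to manufacturing low-degree rational curves.

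The paper's route is much more direct and avoids the chain entirely for the upper bound. One takes a covering family $\cK$ of minimal rational curves on $X$ itself; Mori's bend-and-break bound gives $-K_X\cdot C\le n+1$ for general $[C]\in\cK$, with equality only for $\bbP^n$. Hence $H\cdot C\le n/r_X\le 2$, and if $r_X>n/2$ one gets $H\cdot C=1$ immediately. The borderline $r_X=n/2$ with $\rho(X)\ge 2$ is dispatched via \cite{CasagrandeDruel2015}, and the single remaining case $n=4$, $r_X=2$, $\rho(X)=1$ is handled by the explicit list of Mukai $4$-folds. This yields a covering family of lines, hence a line through \emph{every} point (Remark~\ref{covered by lines open property}); combining with Theorem~\ref{Intro Lowerbound} finishes. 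Your fallback idea of ``writing down lines explicitly via the classification'' is in the right spirit, but the paper shows that for most of the range no classification is needed: the inequality $r_X>n/2$ alone forces $H\cdot C=1$.
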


Now it remains to consider nonsingular Fano threefolds. The classification of polarized Fano threefolds $(X,L)$ with $\varepsilon(X,L;x)<1$ for some point $x\in X$ was studied by Lee in \cite{Lee2003,Lee2004a}. Moreover, if $X$ is a very general nonsingular Fano threefold with $\rho(X)=1$, then $\varepsilon(X,-K_X;1)$ is calculated by Ito via toric degenerations (see \cite[Theorem 1.8]{Ito2014}). The nonsingular Fano threefolds with $\rho\geq 2$ are classified by Mori-Mukai in \cite{MoriMukai1981/82,MoriMukai2003}. Given a nonsingular Fano threefold $X$, we identify it (or rather its deformation family) by the pair of numbers
\[\gimel(X)=\rho.N,\]
where $\rho$ is the Picard rank of the threefold $X$, and $N$ is its number in the classification tables in \cite{MoriMukai1981/82,Shafarevich1999,MoriMukai2003}. 

\begin{thm}\label{Intro Fano threefolds Picard number large Seshadri}
	Let $X$ be a nonsingular Fano threefold with $\rho(X)\geq 2$. 
	\begin{enumerate}
		\item $\varepsilon(X,-K_X;1)=1$ if and only if $X$ admits a del Pezzo fibration of degree $1$, or equivalently $\gimel(X)\in \{2.1, 10.1\}$.
		
		\item $\varepsilon(X,-K_X;1)=4/3$ if and only if $X$ admits a del Pezzo fibration of degree $2$, or equivalently $\gimel(X)\in\{2.2, 2.3, 9.1\}$.
		
		\item $\varepsilon(X,-K_X;1)=3/2$ if and only if $X$ admits a del Pezzo fibration of degree $3$, or equivalently $\gimel(X)\in\{2.4, 2.5, 3.2, 8.1\}$
		
		\item $\varepsilon(X,-K_X;1)=3$ if and only if $X$ is isomorphic to the blow-up of $\bbP^3$ along a smooth plane curve $C$ of degree $d\leq 3$, or equivalently $\gimel(X)\in\{2.28, 2.30, 2.33\}$.
		
		\item $\varepsilon(X,-K_X;1)=2$ otherwise.
	\end{enumerate}
\end{thm}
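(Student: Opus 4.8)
The plan is to split the computation according to the structure of the minimal rational curves on $X$, using the Mori--Mukai classification as a backbone. For a nonsingular Fano threefold $X$ with $\rho(X)\geq 2$, the key invariant is $\ell_X$, the minimal anticanonical degree of a covering family of minimal rational curves, which gives the a priori upper bound $\varepsilon(X,-K_X;1)\leq \ell_X$. First I would run through the Mori--Mukai tables and record, for each deformation family, the value of $\ell_X$ together with a minimal dominating family of rational curves; this is essentially bookkeeping, since for threefolds the minimal rational curves and their anticanonical degrees are well documented. The families with $\ell_X=2$ will be those carrying a covering family of lines (in the sense of some embedding) or conics; for these the upper bound forces $\varepsilon(X,-K_X;1)\leq 2$, and one needs a matching lower bound.

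The lower bound is the substantive half. For most families I would use Broustet's results (\cite{Broustet2009}, and the delta-genus / $\rho=1$ input quoted in the introduction) together with the following elementary principle: if through a very general point $x$ there passes an irreducible rational curve $C$ with $-K_X\cdot C=d$ and $\mult_x C=1$, and moreover one can show no curve through $x$ has ratio $(-K_X\cdot C')/\mult_x C'$ smaller than some explicit $\delta$, then $\varepsilon(X,-K_X;1)=\delta$. Concretely, cases (1)--(3) are the del Pezzo fibration cases: if $\pi\colon X\to\bbP^1$ is a del Pezzo fibration of degree $d\in\{1,2,3\}$, then a general fiber $S$ is a del Pezzo surface of that degree, $-K_X|_S = -K_S$, and Broustet's Th\'eor\`eme 1.3 (quoted above) gives $\varepsilon(S,-K_S;1)\in\{1,4/3,3/2\}$ respectively; since Seshadri constants can only drop under restriction to a subvariety through the point, $\varepsilon(X,-K_X;1)\leq \varepsilon(S,-K_S;x)$ for $x\in S$ general, so $\varepsilon(X,-K_X;1)\leq\{1,4/3,3/2\}$, and the reverse inequality follows by exhibiting the extremal curve (a section of a pencil, or an anticanonical curve on the fiber) and checking it computes the infimum globally, e.g.\ via the cone of curves and Mori theory for the threefold. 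The identification of exactly which families $\gimel(X)$ admit such a fibration is read off from the Mori--Mukai tables.

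For case (4), the blow-ups $\sigma\colon X\to\bbP^3$ along a smooth plane curve of degree $d\leq 3$ (families $2.28,2.30,2.33$): here one computes directly. The strict transforms of lines in $\bbP^3$ give a covering family; a general such line meets the center in zero points, has $-K_X$-degree equal to $-K_{\bbP^3}\cdot\ell = 4$ minus the correction from the exceptional divisor, and one checks $-K_X\cdot\widetilde\ell=3$ for the relevant lines — wait, more carefully: $-K_X=\sigma^*(-K_{\bbP^3})-E$ when blowing up a curve (with the usual discrepancy), and for a general line $\ell$ disjoint from $C$ one gets $-K_X\cdot\widetilde\ell=4$, while lines meeting $C$ transversally give smaller degree; the minimum over a covering family turns out to be $3$ (lines in the plane $\langle C\rangle$, or appropriate secant lines), giving $\ell_X=3$, and one shows no curve does better. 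The lower bound $\varepsilon\geq 3$ uses that $-K_X$ restricted to any irreducible surface/curve through a very general $x$ is sufficiently positive — this can be bootstrapped from Theorem \ref{Intro Lowerbound} (which gives $\geq r_X$, but $r_X=1$ here so that is too weak) and instead should use Broustet's finer threefold analysis plus the explicit geometry of these blow-ups. Finally, case (5) is the complement: for every remaining family one shows $\ell_X=2$ (a covering family of conics or lines exists) and $\varepsilon(X,-K_X;1)\geq 2$, the latter again by Broustet's lower bounds for Fano threefolds combined with the observation that none of these families admits a low-degree del Pezzo fibration or is one of the three blow-ups, so the obstruction curves forcing $\varepsilon<2$ are absent.

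The main obstacle I anticipate is organizational rather than conceptual: one must go family-by-family through roughly ninety deformation classes, in each case (i) produce a covering family of rational curves realizing $\ell_X$, and (ii) rule out the existence of a curve $C\ni x$ ($x$ very general) with $(-K_X\cdot C)/\mult_x C$ strictly below the claimed value. Step (ii) is where genuine work hides: an extremal curve could have positive multiplicity at $x$, or could fail to be covering, and excluding these requires either a global argument (the structure of $\NE{X}$ and the fact that any $-K_X$-extremal ray has length $\geq 1$, so a low-ratio curve would have to be very special) or a dimension count showing such curves do not sweep out a dense subset. I expect the truly delicate families to be the borderline ones adjacent to the del Pezzo fibration cases — where a del Pezzo fibration of degree $\leq 3$ almost but does not quite exist — and the $\rho=2$ blow-ups near families $2.28$--$2.33$, where the precise value of $\ell_X$ depends sensitively on the degree $d$ of the plane curve and on which lines of $\bbP^3$ lift to low anticanonical degree.
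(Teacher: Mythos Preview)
Your proposal has a genuine gap in the lower-bound direction, and it is precisely the step you flag as ``where genuine work hides.'' You plan to establish $\varepsilon(X,-K_X;1)\geq 2$ for the roughly eighty families in case~(5) by invoking ``Broustet's lower bounds for Fano threefolds combined with the observation that none of these families admits a low-degree del Pezzo fibration.'' But Broustet's result (Theorem~\ref{Intro Lowerbound} here) only gives $\varepsilon\geq r_X$, which is $1$ for the majority of these families; there is no cited result that upgrades this to $2$. And the absence of a del Pezzo fibration of degree $\leq 3$ does not, by itself, prevent some other covering family of curves from having ratio $(-K_X\cdot C)/\mult_x C<2$ --- you need a mechanism linking the two, and your proposal does not supply one. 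The same issue recurs in cases (2) and (3): restricting to a general fiber gives the \emph{upper} bound $\varepsilon\leq 4/3$ or $3/2$, but your ``reverse inequality'' by ``exhibiting the extremal curve and checking it computes the infimum globally'' is not an argument; the infimum is over \emph{all} curves through $x$, and a single witness cannot certify it.

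The paper's route avoids the family-by-family lower bound entirely by exploiting a structural fact due to Mori--Mukai: for every nonsingular Fano threefold with $\rho\geq 2$ and $\vert{-K_X}\vert$ free, the anticanonical divisor admits a \emph{free splitting} $-K_X=D_1+D_2$ with both $\vert D_i\vert$ base-point-free (Theorem~\ref{existence of free splitting}). One then analyzes an arbitrary irreducible curve $C$ through a very general $x$ against the two morphisms $g_i$ induced by $\vert D_i\vert$: if $C$ is contracted by neither, freeness gives $-K_X\cdot C\geq 2\mult_x C$; if $g_i$ is a fibration in curves, adjunction gives equality; and if $g_i$ is a fibration in surfaces, one lands on a del Pezzo fiber $S$ and invokes Theorem~\ref{Broustet's theorem}. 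This yields Theorem~\ref{Morphisms-Splittings}: $\varepsilon<2$ forces one of the $g_i$ to be a del Pezzo fibration of degree $\leq 3$, uniformly, with no case analysis. The classification problem is thereby reduced to determining which families admit such a fibration --- a finite check, but one that the paper also organizes structurally via the $(\clubsuit)$ blow-up description rather than going through all ninety entries. For case~(4) the paper does not compute with lines at all; it quotes \cite{CasagrandeDruel2015} to characterize $\ell_X\geq 3$ and \cite{LiuZhuang2018} for the value $\varepsilon=3$. Your sketch for that case, as you yourself notice mid-paragraph, does not close.
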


Combined with \cite[Theorem 1.8]{Ito2014} and \cite[Theorem 2.4.5]{Shafarevich1999} (cf. Corollary \ref{1-Non-Basepoint-Freeness}), Theorem \ref{Intro Fano threefolds Picard number large Seshadri} provides an answer to Question \ref{1-Classification} for nonsingular Fano threefolds modulo genericity assumptions.

\begin{cor}\label{Intro Seshadri=1 classification}
	Let $X$ be a nonsingular Fano threefold which is very general in its deformation family. Then $\varepsilon(X,-K_X;1)\leq 1$ if and only if $\vert -K_X\vert$ is not base point free.
\end{cor}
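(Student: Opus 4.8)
The plan is to assemble Corollary~\ref{Intro Seshadri=1 classification} from the three sources that together cover every deformation family of nonsingular Fano threefold, sorted by Picard rank. For $\rho(X)\geq 2$, Theorem~\ref{Intro Fano threefolds Picard number large Seshadri} does all the work: part (1) says $\varepsilon(X,-K_X;1)=1$ exactly when $X$ admits a degree-$1$ del Pezzo fibration, i.e. $\gimel(X)\in\{2.1,10.1\}$, while parts (2)--(5) all give $\varepsilon(X,-K_X;1)\geq 4/3>1$. So for $\rho\geq 2$ the condition $\varepsilon(X,-K_X;1)\leq 1$ is equivalent to $\varepsilon(X,-K_X;1)=1$, which is equivalent to $\gimel(X)\in\{2.1,10.1\}$; I then need only check against the Mori--Mukai / Iskovskikh classification tables that $2.1$ and $10.1$ are precisely the two families in Picard rank $\geq 2$ for which the anticanonical linear system is not base point free. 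For $\rho(X)=1$ there are two sub-cases. If $X$ is very general with $\rho(X)=1$, Ito's computation \cite[Theorem 1.8]{Ito2014} gives $\varepsilon(X,-K_X;1)$ in terms of the genus/index, and inspecting his list shows $\varepsilon(X,-K_X;1)=1$ occurs exactly for the family $V_1$ (the anticanonical double cover of the Veronese cone, degree $(-K_X)^3=2$, or equivalently a sextic hypersurface in $\bbP(1,1,1,2,3)$); combined with the standard fact (see \cite[Theorem 2.4.5]{Shafarevich1999}) that among $\rho=1$ Fano threefolds $|-K_X|$ fails to be base point free exactly for this same family $V_1$, the two conditions match.

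Concretely, the argument is a bookkeeping proof: first record that by Theorem~\ref{Intro Dimension at least four Seshadri} and the threefold index classification, the only interesting range is $r_X\in\{1,2\}$, and $r_X=2$ (resp. $r_X\geq 3$) forces $\varepsilon(X,-K_X;1)\geq 2$ by inequality~\eqref{Inequality-Anticanonical} together with Broustet's lower bound $\varepsilon(X,H;1)\geq 1$ for the fundamental divisor in that range — so $\varepsilon(X,-K_X;1)\leq 1$ already implies $r_X=1$. Then split into $\rho\geq 2$ (apply Theorem~\ref{Intro Fano threefolds Picard number large Seshadri}) and $\rho=1$ (apply \cite[Theorem 1.8]{Ito2014} under the genericity hypothesis), and in each case cross-reference the base-point-freeness statement: for $\rho\geq 2$ one uses that a degree-$1$ del Pezzo fibration has a base curve forced by the generator of the relative Picard group, and for $\rho=1$ one quotes \cite[Theorem 2.4.5]{Shafarevich1999}. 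Finally observe that whenever $|-K_X|$ \emph{is} base point free, $|-K_X|$ contains a smooth surface $S\in|-K_X|$ (Bertini, since by adjunction $S$ is a $K3$ surface when $r_X=1$ or a del Pezzo surface when $r_X\geq 2$), and a general member gives $\varepsilon(X,-K_X;1)\geq \varepsilon(S,-K_X|_S;1)$ strictly exceeding $1$ by Theorem~\ref{Broustet's theorem} in the del Pezzo case and by Ein--Lazarsfeld's surface bound in the $K3$ case — which re-derives the ``only if'' direction uniformly — though in practice it is cleaner simply to read it off the tables.

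The main obstacle is not any single hard estimate but rather the completeness and consistency of the cross-referencing: one must be certain that the three input theorems, whose hypotheses and numbering conventions differ (Mori--Mukai numbering in Theorem~\ref{Intro Fano threefolds Picard number large Seshadri} versus Iskovskikh's $V_d$ notation in \cite{Ito2014,Shafarevich1999}), are being glued along a genuine partition of all deformation families, and that the ``very general in its deformation family'' hypothesis is exactly what is needed to invoke \cite[Theorem 1.8]{Ito2014} for $\rho=1$ (Ito's toric-degeneration computation only controls the very general member) and is harmless for $\rho\geq 2$ where Theorem~\ref{Intro Fano threefolds Picard number large Seshadri} is already stated for all members — so the final hypothesis in the corollary is dictated by the weakest of the inputs. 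Subject to that careful bookkeeping, no new geometry is required and the proof is a short paragraph.
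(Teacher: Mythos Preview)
Your overall strategy is exactly the paper's: split by Picard rank, invoke Theorem~\ref{Intro Fano threefolds Picard number large Seshadri} for $\rho\geq 2$, invoke Ito's computation \cite[Theorem 1.8]{Ito2014} for very general $\rho=1$, and cross-reference with \cite[Theorem 2.4.5]{Shafarevich1999} (reproduced in the paper as Theorem~\ref{Non-Basepoint-Free}) for the base-point-freeness side. The paper gives no more than the one-line pointer preceding the corollary, so a correctly executed bookkeeping paragraph would be fully acceptable.

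However, your execution of the $\rho=1$ case contains a factual error. You assert that among $\rho=1$ families there is one (which you call ``$V_1$'') for which both $\varepsilon(X,-K_X;1)=1$ and $\vert-K_X\vert$ is not base point free. This is wrong on both counts. Theorem~\ref{Non-Basepoint-Free} lists \emph{all} nonsingular Fano threefolds with $\bs\vert-K_X\vert\neq\emptyset$, and the only families are $\gimel(X)\in\{2.1,10.1\}$, both with $\rho\geq 2$; for every Fano threefold with $\rho=1$ the anticanonical system is base point free. Correspondingly, Ito's table gives $\varepsilon(X,-K_X;1)>1$ for every very general $\rho=1$ family. (Your description of ``$V_1$'' is also internally inconsistent: the sextic in $\bbP(1,1,1,2,3)$ is the index-$2$ del Pezzo threefold with $(-K_X)^3=8$, not $2$; the family with $(-K_X)^3=2$ is the index-$1$ sextic in $\bbP(1^4,3)$, and for it $\vert-K_X\vert$ is free.) Your two mistaken claims happen to cancel, so your conclusion for $\rho=1$ is accidentally correct, but the argument as written rests on false statements and should be repaired: simply say that for $\rho=1$ both sides of the equivalence are always false.

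Separately, your proposed ``uniform'' alternative for the only-if direction is flawed: for $x\in S\subset X$ one has $\varepsilon(X,-K_X;x)\leq \varepsilon(S,-K_X\vert_S;x)$, not the reverse inequality you use, because the infimum defining $\varepsilon$ on $X$ is taken over a larger set of curves. (The correct device, as in Broustet's argument behind Theorem~\ref{Intro Lowerbound}, is to split curves through $x$ into those contained in $S$ and those not, yielding $\varepsilon(X,-K_X;1)\geq\min\{1,\varepsilon(S,-K_X\vert_S;1)\}$, which by itself does not give a strict inequality.) Since you yourself flag this as optional, dropping it costs nothing.
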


\subsection{Further developments}

One may ask if Corollary \ref{Intro Seshadri=1 classification} still holds without the genericity assumption. In fact, the genericity assumption is only used to apply the result of Ito. In the case where $-K_X$ is very ample, by \cite[Lemma 2.2]{Chan2010}, $\varepsilon(X,-K_X;1)\leq 1$ if and only if $(X,-K_X)$ is covered by lines, which never happens if $r_X=1$. If $X$ is a nonsingular Fano threefold with $\rho(X)=1$, then $-K_X$ is very ample except $\gimel(X)\in \{1.1,1.2\}$ (see \cite[Proposition 4.1.11]{Shafarevich1999}). If $\gimel(X)\in\{1.1, 1.2\}$ and $-K_X$ is not very ample, then $X$ is actually a smooth complete intersection in a weighted projective space (cf. Proposition \ref{Threefolds-Picard-Number-One} and Remark \ref{Remark-Weighted-Complete-Intersection}). We are thus led to ask the following question.

\begin{question}
	Let $X$ be a smooth Fano weighted complete intersection in a weighted projective space, and let $\cO_X(1)$ be the restriction of the universal $\cO(1)$-sheaf from the weighted projective space. If $(X,\cO_X(1))$ is not covered by lines, does there exist a point $x\in X$ such that $\varepsilon(X,\cO_{X}(1);x)>1$?
\end{question}

Furthermore, another natural question is to ask if the analogue of Corollary \ref{Intro Seshadri=1 classification} holds in higher dimension. By Theorem \ref{Intro Lowerbound}, if $X$ is an $n$-dimensional Fano manifold of coindex at most $4$, an obvious necessary condition for $\varepsilon(X,-K_X;1)\leq 1$ is that the index $r_X$ of $X$ must equal to $1$. On the other hand, whereas the non-freeness of $\vert-K_X\vert$ implies $r_X=1$ for nonsingular Fano $4$-fold, it is no longer true for nonsingular Fano $5$-folds. Even in dimension $4$, there exist nonsingular Fano $4$-folds $X$ such that $\vert-K_X\vert$ is not base point free but $\varepsilon(X,-K_X;1)>1$.

\begin{example}\label{4-fold-5-fold}
	\begin{enumerate}
		\item Let $X\subset \bbP(1^{4},2,5)$ be a very general hypersurface of degree $10$. Then $X$ is a nonsingular Fano $4$-fold with index $1$. By \cite[Theorem 2.2]{IltenLewisPrzyjalkowski2013}, $(X,\cO_X(1))$ degenerates to a polarized toric variety $(X_P,L_P)$ as a $\bbQ$-polarized variety for 
		\[P\colon=\conv(e_1,e_2,e_3,e_4,-1/6(e_1+e_2+e_3+e_4))\subset\bbR^4.\] By \cite[Example 3.10 and Lemma 4.3]{Ito2014}, we obtain
		\[\varepsilon(X,\cO_X(1);1)\geq \varepsilon(X_P,L_P;1)\geq 10/9.\]
		
		\item Similarly, let $X\subset\bbP(1^5,2,5)$ be a nonsingular hypersurface of degree $10$. Then $X$ is a nonsingular Fano $5$-fold with index $2$ and $\vert-K_X\vert$ is not base point free. By \cite[V, 4.11]{Kollar1996}, the polarized pair $(X,\cO_X(1))$ is covered by lines. In particular, we have $\varepsilon(X,-K_X;1)=2$.
	\end{enumerate}
\end{example}

Thus the strict analogue of Corollary \ref{Intro Seshadri=1 classification} does not hold in higher dimension. However we can still ask the following question.

\begin{question}
	Let $X$ be a Fano manifold such that $\varepsilon(X,-K_X;1)\leq 1$. Is the base locus of the linear system $\vert-K_X\vert$ non-empty?
\end{question}

\subsection*{Acknowledgements} The author wishes to express his thanks to Andreas H\"oring and Christophe Mourougane their constant encouragements and supports. He also wound like to thank Ama\"el Broustet and St\'ephane Druel for useful comments. This paper was written while the author stayed at Institut de Recherche Math\'ematique de Rennes (IRMAR) and Laboratoire de Math\'ematiques J.\,A. Dieudonn\'e (LJAD) and he would like to thank both the institutions for the hospitality and support.

\section{Polarized manifolds covered by lines}\label{Section Lines Fano manifolds}

In this section, we study the existence of lines on Fano manifolds with large index and the main aim is to prove Theorem \ref{Intro Dimension at least four Seshadri}.

\begin{defn}\label{Lines}
	Let $(X,H)$ be a polarized projective manifold. A line (with respect to $H$) in $X$ is a rational curve $C\subset X$ such that $H\cdot C=1$. We say that $(X,H)$ is covered by lines if through every point $x$ of $X$ there is a line contained in $X$.  
\end{defn}

In general $X$ cannot be embedded into projective spaces in such a way that a line $C$ on $X$ becomes a projective line. If $(X,H)$ is covered by lines, then by definition it is easy to see $\varepsilon(X,H;x)\leq 1$ for every point $x\in X$.

\begin{lemma}\label{RCfamily}
	Let $(X,H)$ be a polarized projective manifold. Assume moreover that through a very general point $x$ there is a rational curve $C_x\subset X$ of degree $d$. Then there exists an irreducible closed subvariety $W$ of $\Chow{X}$ such that 
	\begin{enumerate}
		\item the universal cycle over $W$ dominates $X$, and
		
		\item the subset of points in $W$ parametrizing the rational curves $C_x$ (viewed as $1$-cycles on $X$) is dense in $W$. 
	\end{enumerate}
\end{lemma}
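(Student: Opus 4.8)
The plan is to extract the subvariety $W$ from the Chow variety by a standard spreading-out/closure argument. First I would set up the parameter space: let $\chow{X}$ denote the Chow variety of $1$-cycles on $X$, and inside it consider the locally closed subset
\[
Z \colon= \{\, [C_x] \in \chow{X} \,:\, x \text{ a very general point, } C_x \text{ the given rational curve of degree } d \,\}.
\]
Since "very general" means the complement of a countable union of proper Zariski-closed subsets, and since the curves $C_x$ all have the same degree $d$, they all lie in a finite union of connected (hence projective) components of $\chow{X}$; let $\chow{X}_d$ be that union. The first point to pin down is that the points $[C_x]$ are Zariski-dense in at least one irreducible component of $\chow{X}_d$: this follows because the union of the $C_x$ covers a very general point of $X$, so the union of the supports of the cycles parametrized by the Zariski closure $\overline{Z}$ must be all of $X$; if no irreducible component of $\chow{X}_d$ contained a dense subset of the $[C_x]$, then $\overline{Z}$ would be a proper closed subset contained in a countable union of such components' proper closed subsets, and the total family of curves over it would fail to dominate $X$ (the image in $X$ being a countable union of proper closed subsets, which cannot be all of $X$ over $\bbC$ — here uncountability of $\bbC$ is used). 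Thus I may choose an irreducible component $W_0$ of $\chow{X}_d$ in which the set $\{[C_x]\}$ is dense.

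Next I would pass to the universal cycle. Let $\Univ \to W_0$ be the universal family of $1$-cycles over $W_0$, with its evaluation morphism $\morp{e}{\Univ}{X}$. Since the cycles $C_x$ are dense in $W_0$ and their union covers a very general — hence a dense — subset of $X$, the image $e(\Univ)$ is dense in $X$; being also closed (properness of $\Univ \to W_0$ and of $W_0$), it equals $X$, so $e$ is dominant. This gives property (1). Property (2) is then property of $W_0$ by construction: the locus parametrizing the $C_x$ is dense in $W_0$. Finally set $W \colon= W_0$ with its reduced structure; it is irreducible by choice, closed in $\chow{X}$ since it is a component of the closed subset $\chow{X}_d$, and satisfies (1) and (2).

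The only genuinely delicate point is the density claim — step one — and in particular making precise that a countable union of proper Zariski-closed subsets of $X$ cannot cover $X$, which is where the uncountability of the ground field enters; everything else is formal manipulation with Chow varieties and their universal families. A minor technical nuisance is that $\chow{X}$ may have infinitely many components in total, but boundedness is restored once the degree $d$ of the curves (with respect to $H$) is fixed, so that only finitely many components are in play. I would also remark that one could equally work with $\RC{X}$ or with the normalization of $W$, but the Chow-variety formulation suffices for the applications in the next section.
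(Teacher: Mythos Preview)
Your overall strategy matches the paper's---both exploit that $\Chow{X}$ has only countably many irreducible components while over $\bbC$ there are uncountably many curves $C_x$, and the paper's own proof is in fact just a two-line sketch of this pigeonhole. There is, however, a genuine gap in your density argument. You seek $W_0$ among the irreducible \emph{components} of $\Chow{X}_d$ and claim that if the $[C_x]$ were dense in none of them, the family over $\overline{Z}$ would fail to dominate $X$. But this contradiction does not materialize: you yourself observed one paragraph earlier that the family over $\overline{Z}$ always dominates $X$ (its image is closed and contains every very general point), and this remains true whether or not $\overline{Z}$ fills up a whole component. Indeed the $[C_x]$ need not be dense in any component of $\Chow{X}_d$: take $X=\bbP^2$ and let $C_x$ be the unique conic through $x$ and four fixed general points---then $\{[C_x]\}$ is only a surface inside the $5$-dimensional space of plane conics.

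The fix is immediate and keeps your outline intact: take $W$ to be an irreducible component of $\overline{Z}$ rather than of $\Chow{X}_d$. Then (2) is automatic, since a subset of a Noetherian space is dense in each irreducible component of its own closure; and since the universal family over $\overline{Z}$ dominates $X$ and $\overline{Z}$ has only finitely many irreducible components, the family over at least one of them already dominates $X$, giving (1). The lemma asks only for an irreducible closed \emph{subvariety} of $\Chow{X}$, not a full component, so nothing is lost by this shrinking.
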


\begin{proof}
	Recall that $\Chow{X}$ has countably many irreducible
	components. On the other hand, since we are working over $\bbC$, we have uncountably many lines on $X$. Then the existence of $W$ is clear.
\end{proof}

\begin{remark}\label{covered by lines open property}
	Let $(X,H)$ be a polarized projective manifold. If through a very general point $x\in X$ there is a line $x\in\ell \subset X$, then through every point $x\in X$ there is a line. In fact, let us denote by $W$ the subvariety of $\Chow{X}$ provided in Lemma \ref{RCfamily}. We remark that every cycle $[C]$ in $W$ is irreducible and reduced as $H\cdot C=1$. Let $\cV$ be an irreducible component of $\RC{X}$ such that its image in $\Chow{X}$ contains $W$. Then $\cV$ is an unsplit covering family of minimal rational curves. Let $U$ be the universal family over $\cV$. Then the evaluation map $e\colon U\rightarrow X$ is surjective. Hence, $(X,H)$ is covered by lines. 
\end{remark}

The following fact is well-known for experts, but for the convenience of reader we give a complete proof.

\begin{prop}\label{Lines-Large-Index}
	Let $X$ be an $n$-dimensional Fano manifold such that $n\geq 3$, and let $H$ be the fundamental divisor. If either $r_X>n/2$, or $r_X=n/2$ and $\rho(X)\geq 2$, then $(X,H)$ is covered by lines.
\end{prop}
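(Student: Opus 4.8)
The plan is to use the theory of minimal rational curves together with the classification-type input coming from adjunction. First I would fix a covering family $\cV$ of minimal rational curves on $X$ (these exist since $X$ is Fano) and let $\cV_x$ be the subfamily of curves through a general point $x$. By the bend-and-break / deformation estimates, a general member of $\cV_x$ is a free rational curve, and the dimension count $\dim\cV_x \geq -K_X\cdot C - 2 = r_X(H\cdot C) - 2$ holds. The key is therefore to bound the anticanonical degree of a minimal rational curve from above: I would show that a minimal rational curve $C$ through a general point satisfies $-K_X\cdot C \leq r_X$, i.e. $H\cdot C = 1$, which immediately gives that $(X,H)$ is covered by lines by Remark \ref{covered by lines open property}.

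To get the degree bound I would argue by contradiction: suppose every minimal rational curve through a general point has $-K_X\cdot C \geq r_X+1$, hence $-K_X\cdot C \geq 2r_X$ if $r_X \mid -K_X\cdot C$ is used, but more carefully $H\cdot C \geq 2$. Then consider the variety of minimal rational tangents or, more elementarily, the locus $\mathrm{Locus}(\cV_x)$ swept out by curves through $x$. Since $\cV_x$ is unsplit (a general member has minimal degree), its locus has dimension at least $-K_X\cdot C - 1 = r_X(H\cdot C) - 1 \geq 2r_X - 1$. Now I would invoke the index hypothesis: when $r_X > n/2$ we get $2r_X - 1 \geq n$, forcing $\mathrm{Locus}(\cV_x) = X$, and then a standard argument (e.g. using that two general points of $X$ can be joined by a connected chain of at most one such curve, combined with the adjunction-theoretic structure of $X$) produces a curve of degree strictly smaller, contradicting minimality — or alternatively one directly deduces $\rho(X)=1$ and applies the known classification of Fano manifolds of large index (del Pezzo manifolds, Mukai varieties, etc.), all of which are covered by lines in the stated range. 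The boundary case $r_X = n/2$ with $\rho(X)\geq 2$ I would handle separately: here $\mathrm{Locus}(\cV_x)$ has dimension $\geq n-1$, so it is a divisor or all of $X$; if it is a divisor for general $x$ one obtains a covering family of divisors, and an analysis of the contraction associated to the extremal ray spanned by $[\cV]$ (using $\rho(X)\geq 2$ so that there is a nontrivial Mori contraction, which by the length bound $\ell(R) \geq r_X = n/2$ is of fiber type with low-dimensional general fiber, or birational with small exceptional locus) again forces the existence of lines.

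The main obstacle I anticipate is the boundary case $r_X = n/2$: there the naive dimension count gives only a divisorial locus, so one genuinely needs the hypothesis $\rho(X)\geq 2$ to run a Mori-theoretic argument on an extremal contraction $\varphi\colon X\to Y$ whose length is at least $r_X$. One must check that the general fiber $F$ of such a $\varphi$ (when it is of fiber type) is itself a Fano manifold of index $\geq r_X \geq \dim F$ or $\dim F + 1$ within its dimension — hence $\bbP^{\dim F}$ or a quadric — so that $F$, and therefore $X$, contains lines through a general point; and when $\varphi$ is birational one must bound the dimension of the exceptional locus using the length inequality to see that a general point lies outside it and is covered by lines coming from a fiber-type contraction of another ray (which exists since $\rho(X)\geq 2$). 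Packaging these subcases uniformly, rather than appealing piecemeal to the Fano classification, is where the real care is needed; I would structure the write-up so that the $r_X > n/2$ case is dispatched quickly by the locus dimension count and Remark \ref{covered by lines open property}, and devote the bulk of the argument to the equality case via extremal contractions.
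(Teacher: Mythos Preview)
Your proposal starts from the right object (a minimal covering family of rational curves) but misses the elementary bound that makes both cases short, and the ``standard argument'' you invoke in the $r_X>n/2$ case is not actually valid as stated.

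For $r_X>n/2$, the paper does not use any locus estimate. It simply quotes Mori's bound $-K_X\cdot C\le n+1$ for a general member of a minimal family, together with the Cho--Miyaoka--Shepherd-Barron characterization of $\bbP^n$ for the equality case. Excluding $\bbP^n$ (where the statement is trivial), one has $-K_X\cdot C\le n$, and then $H\cdot C=(-K_X\cdot C)/r_X\le n/r_X<2$, so $H\cdot C=1$ immediately. Your route via $\dim\mathrm{Locus}(\cV_x)\ge 2r_X-1\ge n$ does not, by itself, produce a curve of smaller degree: the equality $\mathrm{Locus}(\cV_x)=X$ is perfectly compatible with a minimal family of anticanonical degree $n+1$ (witness $\bbP^n$), so the contradiction you claim does not materialize without precisely the inputs (Mori's bound and CMSB) that already finish the argument directly.

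For the boundary case $r_X=n/2$ with $\rho(X)\ge 2$, the paper again avoids any case-by-case Mori-theoretic analysis. After the above, one only needs to rule out $\ell_X=n$ (equivalently $H\cdot C=2$ for every minimal family). The paper cites the Casagrande--Druel classification: a Fano $n$-fold with $\rho\ge 2$ and $\ell_X=n$ is the blow-up of $\bbP^n$ along a smooth degree-$d$ subvariety of a hyperplane, and such varieties have index $1$, contradicting $r_X=n/2\ge 2$. Your proposed analysis of extremal contractions and their fibers might be made to work, but it amounts to reproving a fragment of that classification from scratch, with several subcases (fiber type vs.\ birational, bounding fiber dimensions, etc.)\ that you have not controlled; the cleaner path is to invoke the existing result.
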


\begin{proof}
	Let $\cK$ be a family of minimal rational curves on $X$. By \cite[V, Theorem 1.6]{Kollar1996}, for a general member $[C]\in \cK$, we have $-K_X\cdot C\leq n+1$. Moreover, according to \cite{ChoMiyaokaShepherd-Barron2002}, the equality holds if and only if $X$ is isomorphic to $\bbP^n$. Thus, without loss of generality, we may assume that $-K_X\cdot C\leq n$ for a general member $[C]\in \cK$. As $r_X\geq n/2$, for a general member $[C]\in\cK$, we obtain
	\[H\cdot C=\frac{1}{r_X}(-K_X\cdot C)\leq \frac{n}{r_X}\leq 2\]
	with equality if and only if $r_X=n/2$ and $-K_X\cdot C=n$. Let $X$ be an $n$-dimensional Fano manifold with index $n/2$ and $\rho(X)\geq 2$ such that $-K_X\cdot \cK=n$ for all families $\cK$ of minimal rational curves on $X$. Then we get $\ell_X=n$. Thanks to \cite[Theorem 1.4]{CasagrandeDruel2015}, $X$ is the blow-up of $\bbP^n$ along a smooth subvariety $A$ of dimension $n-2$ and degree $d\in\{1,\dots,n\}$, contained in a hyperplane. Nevertheless, it is easy to see that such $X$ are Fano manifolds of index $1$, which is impossible by our assumption. Hence, if $X$ is an $n$-dimensional Fano manifold with index $n/2$ and $\rho(X)\geq 2$, then we have $\ell_X=n/2$. In particular, $(X,H)$ is covered by lines.
\end{proof}

\begin{remark}
	If $X$ is an $n$-dimensional Fano manifold with $\rho(X)=1$ such that $\ell_X=n$ and $n\geq 3$, Miyaoka proves in \cite{Miyaoka2004} that $X$ is isomorphic to a smooth quadric hypersurface. However, the proof there is incomplete (see \cite[Remark 5.2]{DedieuHoering2017}).
\end{remark}

\begin{cor}\label{Lines-Del-Pezzo-Mukai}
	Let $X$ be an $n$-dimensional Fano manifold such that $n\geq 3$, and let $H$ be the fundamental divisor. If $r_X\geq \max\{n-2,2\}$, then $(X,H)$ is covered by lines.
\end{cor}

\begin{proof}
	By Proposition \ref{Lines-Large-Index}, it remains to consider the case $r_X=2$, $n=4$ and $\rho(X)=1$; that is, $X$ is a $4$-dimensional Mukai manifold with $\rho(X)=1$. Then $X$ is a smooth complete intersection in either a weighted projective space or a rational homogeneous space (see \cite[\S\,5.2]{Shafarevich1999}). In the former case, $(X,H)$ is covered by lines by \cite[V,4.11]{Kollar1996}. In the latter case, it is easy to check that $(X,H)$ is also covered by lines (see \cite[Lemma 1]{ItoMiura2014}).	
\end{proof}

\begin{proof}[Proof of Theorem \ref{Intro Dimension at least four Seshadri}]
	 It follows directly from Corollary \ref{Lines-Del-Pezzo-Mukai} and Theorem \ref{Intro Lowerbound}.
\end{proof}

\section{Characterize Fano threefolds via Seshadri constants}\label{Fano threefolds with Picard number at least two Section}

\subsection{Fano threefolds with Picard number one} 

As mentioned in the introduction, the Seshadri constants $\varepsilon(X,-K_X;1)$ of very general nonsingular Fano threefolds $X$ with $\rho(X)=1$ are computed by Ito in \cite{Ito2014}. In the following result, we show that $\varepsilon(X,-K_X;1)$ is invariant in its deformation family if $-K_X$ is very ample. Recall that the \emph{genus} of a nonsingular Fano threefold $X$ is defined to be $(-K_X)^3/2+1$.

\begin{prop}\label{Threefolds-Picard-Number-One}
	Let $X$ be a nonsingular Fano threefold of genus $g$ with $\rho(X)=1$. If $-K_X$ is very ample, then the Seshardri constant $\varepsilon(X,-K_X;1)$ is invariant in its deformation family.
\end{prop}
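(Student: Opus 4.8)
I would first dispose of the case $r_{X}\ge 2$: Theorem \ref{Intro Lowerbound} gives $\varepsilon(X,-K_X;1)\ge r_X$, while by Corollary \ref{Lines-Del-Pezzo-Mukai} the pair $(X,H)$ is covered by lines, so a line $C$ with respect to $H$ through a general point yields $\varepsilon(X,-K_X;1)\le -K_X\cdot C=r_X$; hence $\varepsilon(X,-K_X;1)=r_X$, which depends only on the deformation family. So I may assume $r_X=1$, i.e.\ $-K_X=H$ and $X\hookrightarrow\bbP^{g+1}$ is the anticanonical embedding. I would realize the deformation family as a smooth projective morphism $\pi\colon\cX\to T$ over a smooth connected base with $\cX_{0}\cong X$. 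Very-ampleness of $-K_{\cX/T}$ and the vanishings $H^{1}(\cX_t,-mK_{\cX_t})=0$ being open conditions on $T$, after shrinking $T$ we may assume $-K_{\cX/T}$ is $\pi$-very ample, $\pi_{*}\cO_{\cX}(-mK_{\cX/T})$ is locally free of rank the deformation invariant $h^{0}(\cX_t,-mK_{\cX_t})$ (determined by $(-K_{\cX_t})^{3}=2g-2$ through Riemann--Roch and Kodaira vanishing), and $\cX\hookrightarrow\bbP^{g+1}_{T}$ induces on each fibre its anticanonical embedding; moreover $\rho(\cX_t)=1$ for every $t$, the Picard number being locally constant in smooth families of Fano manifolds.

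Next I would show that $\varepsilon(\cX_t,-K_{\cX_t};1)$ is determined by a bounded amount of discrete data. Since $\ell_{\cX_t}\ge 2$, the threefold $\cX_t$ is not covered by lines. If $\varepsilon(\cX_t,-K_{\cX_t};1)<\sqrt[3]{(-K_{\cX_t})^{3}}=\sqrt[3]{2g-2}$, then by the theory of Seshadri-submaximal curves on threefolds (see e.g.\ \cite{CasciniNakamaye2014}) it is attained by an irreducible curve $C$ through a very general point $x$, of anticanonical degree bounded by an explicit $D=D(g)$; since $x$ is very general, $C$ moves in a covering family, so $-K_{\cX_t}\cdot C\ge 2$, and being irreducible of degree $\ge 2$ it has $\mult_x C\le -K_{\cX_t}\cdot C-1$, whence $\varepsilon(\cX_t,-K_{\cX_t};1)\ge D/(D-1)>1$. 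In either case $\varepsilon(\cX_t,-K_{\cX_t};1)>1$; moreover $\varepsilon(\cX_t,-K_{\cX_t};1)$ equals $\sqrt[3]{2g-2}$ unless $\cX_t$ is covered by a family $\cV$ of rational curves of anticanonical degree $d$, with $2\le d\le D$, having multiplicity $m>d/\sqrt[3]{2g-2}$ at a general point of the sweep, in which case it equals the least such ratio $d/m$. Thus $\varepsilon(\cX_t,-K_{\cX_t};1)$ is determined by $g$ together with the set of ``types'' $(d,m)$, $d\le D$, of covering families occurring on $\cX_t$.

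It remains to prove this value is independent of $t$; since $\sqrt[3]{2g-2}$ is a deformation invariant, it is enough to control the occurring types. For each $(d,m)$, the property ``$\cX_t$ is covered by a family of rational curves of type $(d,m)$'' is governed by the relative Chow scheme $\cC_{\le D}(\cX/T)$ of $1$-cycles of anticanonical degree $\le D$, which is proper over $T$: the image in $\cX$ of the multiplicity-$\ge m$ sublocus of the relevant component of the universal cycle is closed, and hence is all of $\cX$ whenever it is dense. It follows that $t\mapsto\varepsilon(\cX_t,-K_{\cX_t};1)$ is constant, say equal to $\varepsilon_0$, on a dense open $U\subseteq T$, and that $\varepsilon(\cX_t,-K_{\cX_t};1)\le\varepsilon_0$ for every $t$. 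The crux is the reverse inequality at a special point $t_0$: one must exclude that $\cX_{t_0}$ carries an \emph{extra} covering family of some type $(d,m)$ with $d/m<\varepsilon_0$ that the general fibre lacks. Here I would invoke the classification of Fano threefolds with $\rho=1$ and $-K_X$ very ample: each such deformation family admits an explicit uniform description (complete intersections in weighted projective spaces, linear sections of rational homogeneous varieties, and so on), from which a dimension count --- uniform over the family --- determines which types $(d,m)$ of covering families can occur, and this does not depend on the member; in particular a covering family of type $(d,m)$ on $\cX_{t_0}$ would already occur on the general fibre, contradicting the definition of $\varepsilon_0$. Granting this, $\varepsilon(\cX_t,-K_{\cX_t};1)\equiv\varepsilon_0$ on $T$, which is the assertion. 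The very-ampleness of $-K_X$ is used precisely to bound $D$, to exclude $X$ from being covered by lines, and to realize the competing curves as bounded-degree space curves with proper parameter spaces over $T$; the main obstacle is the classification-dependent no-jump step, a purely semicontinuity-based argument being insufficient because the Seshadri constant may only drop, not rise, on special fibres.
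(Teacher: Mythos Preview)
Your treatment of the case $r_X\ge 2$ is fine and matches the paper (via Theorem \ref{Intro Dimension at least four Seshadri}). For $r_X=1$, however, the argument has a genuine gap precisely where you flag ``the main obstacle'': the no-jump step. You must show that a special fibre $\cX_{t_0}$ cannot acquire a covering family of type $(d,m)$ with $d/m<\varepsilon_0$ absent on the general fibre, but your justification is only that ``a dimension count --- uniform over the family --- determines which types $(d,m)$ of covering families can occur''. That is a restatement of what needs to be proved, not a proof; the relative Chow-scheme set-up you describe gives only upper semicontinuity of $t\mapsto\varepsilon(\cX_t,-K_{\cX_t};1)$, and nothing prevents a new covering family from appearing over a special point. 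Your own phrase ``Granting this'' signals that the argument is incomplete at its crux. (A secondary issue: the appeal to \cite{CasciniNakamaye2014} for attainment of a submaximal Seshadri constant by a curve of degree bounded by an explicit $D(g)$ is not something that paper provides.)

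The paper's proof is entirely different and bypasses the difficulty. Using the classification \cite[Table 12.2]{Shafarevich1999}, every Fano threefold with $\rho=1$, $r_X=1$ and $-K_X$ very ample is a complete intersection of hypersurfaces of degree $\le 2$ in a rational homogeneous space, except for $g=4$, where it is a $(2,3)$ complete intersection in $\bbP^5$. Since $r_X=1$, the pair $(X,-K_X)$ is not covered by lines, and then \cite[Theorem 3]{ItoMiura2014} computes $\varepsilon(X,\cO(1)\vert_X;1)$ directly: it equals $2$ in the first case and $3/2$ for $g=4$. The answer visibly depends only on $g$, hence only on the deformation family; no semicontinuity or deformation argument is needed, and the no-jump problem never arises.
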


\begin{proof}
	If the index of $X$ is at least $2$, it follows directly from Theorem \ref{Intro Dimension at least four Seshadri}. Now we consider the nonsingular Fano threefolds with index one. By the very ampleness of $-K_X$, the induced morphism $\Phi_{\vert-K_X\vert}\colon X\rightarrow \bbP^{g+1}$ is an embedding, where $g$ is the genus of $X$. Note that $(X,-K_X)$ is not covered by lines since the index of $X$ is $1$. After identifying $X$ with its image under $\Phi_{\vert-K_X\vert}$, the line bundle $\cO_X(-K_X)$ is isomorphic to $\cO_{\bbP^{g+1}}(1)\vert_X$. By the classification (see \cite[Table 12.2]{Shafarevich1999}), except the case $g=4$, $X$ is always a complete intersection of hypersurfaces of degrees at most two  in a rational homogeneous space. Let $x\in X$ be a very general point. If $(X,-K_X)$ is not covered by lines, by the construction, there are no lines in $\bbP^{g+1}$ lying in $X$ and passing through $x$. Thus, thanks to \cite[Theorem 3]{ItoMiura2014}, for a very general point $x\in X$, we have $\varepsilon(X,-K_X;x)=\varepsilon(X,\cO_{\bbP^{g+1}}(1)\vert_X;x)=2$. For the case $g=4$, $X$ is a complete intersection of a quadric and a cubic (see \cite[Table 12.2]{Shafarevich1999}), by \cite[Theorem 3]{ItoMiura2014} again, we get $\varepsilon(X,-K_X;1)=\varepsilon(X,\cO_{\bbP^5}(1)\vert_X;1)=3/2$.
\end{proof}

\begin{remark}\label{Remark-Weighted-Complete-Intersection}
	According to \cite[Proposition 4.1.11]{Shafarevich1999}, if $X$ is a nonsingular Fano threefold with $\rho(X)=1$ such that $-K_X$ is not very ample, then either $X$ is a weighted hypersurface of degree $6$ in $\bbP(1^4,3)$ (i.e., $\gimel(X)=1.1$), or $X$ is a complete intersection of two weighted quadric hypersurfaces in $\bbP(1^5,2)$ (i.e., $\gimel(X)=1.2$). Up to now I do not know if the Seshadri constant $\varepsilon(X,-K_X;1)$ is invariant in the deformation families of smooth Fano complete intersections in weighted projective spaces.
\end{remark}

\subsection{Splitting and free splitting}

The following concept plays a key role in the classification of Fano threefolds, and it is also the main ingredient of the proof of Theorem \ref{Intro Fano threefolds Picard number large Seshadri}.

\begin{defn}\label{Splitting}
	A Weil divisor $D$ on a projective manifold $X$ has a splitting if there are two non-zero effective divisors $D_1$ and $D_2$ such that $D_1+D_2\in\vert D\vert$. The splitting is called free if the linear systems $\vert D_1\vert$ and $\vert D_2\vert$ are base point free.
\end{defn}

The following criterion is frequently used in \cite{MoriMukai1986} to check the free splitting of anticanonical divisors.

\begin{defn-prop}\cite[Proposition 2.10]{MoriMukai1986}\label{intersection of members}
	Let $Y$ be a projective manifold. Assume that $C$ is a smooth proper closed subscheme of $Y$. Let $\cI_C$ be the sheaf of ideals of $C$ in $X$, and let $D$ be a divisor on $Y$. Let $f\colon X\rightarrow Y$ be the blow-up of $Y$ along $C$. We denote by $E$ the exceptional divisor of $f$. We say that $C$ is an intersection of members of $\vert D\vert$ when the equivalent conditions below are satisfied.
	\begin{enumerate}
		\item The map $H^0(Y,\cO_Y(D)\otimes\cI_C)\otimes\cO_Y\rightarrow \cO_Y(D)\otimes\cI_C$ is surjective.
		
		\item The linear system $\vert f^*D-E\vert$ is base point free.
	\end{enumerate}
\end{defn-prop}

\begin{lemma}\label{Criterion-Complete-Intersection}
	Under the situation of Definition-Proposition \ref{intersection of members}, if $C$ is a curve, then the linear system $\vert f^*D-E\vert$ is composed with a pencil of surfaces if and only if $C$ is a complete intersection of members of $\vert D\vert$.
\end{lemma}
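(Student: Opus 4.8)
The plan is to transport everything to $Y$ and to recognize ``composed with a pencil'' as the assertion that a suitable pencil of members of $|D|$ cuts out exactly $C$. First I would record the identification $H^{0}\bigl(X,\cO_{X}(f^{*}D-E)\bigr)\cong H^{0}\bigl(Y,\cO_{Y}(D)\otimes\cI_{C}\bigr)$, under which, by taking strict transforms, $|f^{*}D-E|$ becomes the sub-linear system $\Sigma\subseteq|D|$ of divisors through $C$; by Definition-Proposition~\ref{intersection of members} this system is base point free. Since $f$ is birational with a single exceptional divisor $E$ contracted onto the codimension $\ge 2$ subscheme $C$, the morphisms $\phi_{|f^{*}D-E|}$ and $\phi_{\Sigma}\circ f$ coincide on the dense open set $X\setminus E\cong Y\setminus C$, hence have the same image; therefore $|f^{*}D-E|$ is composed with a pencil of surfaces if and only if $\dim\phi_{\Sigma}(Y)=1$, and the question now lives entirely on $Y$.

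For the ``if'' direction, suppose $C=D_{1}\cap D_{2}$ scheme-theoretically with $D_{1},D_{2}\in|D|$, and set $\Lambda:=\langle D_{1},D_{2}\rangle$. The base scheme of a pencil is the scheme-theoretic intersection of any two of its members, so $\mathrm{Bs}\,\Lambda=C$; as $C$ is smooth, its blow-up $f$ resolves $\Lambda$, i.e.\ the strict transform of $\Lambda$ is a base-point-free pencil inside $|f^{*}D-E|$ and defines a morphism $X\to\bbP^{1}$. Taking the Stein factorization $X\xrightarrow{\,g\,}B\to\bbP^{1}$ with $B$ a smooth curve and $g$ of connected fibres, one has $f^{*}D-E\sim g^{*}A$ for a line bundle $A$ on $B$, hence $H^{0}(X,\cO_X(f^{*}D-E))=H^{0}(B,\cO_B(A))$ and $\phi_{|f^{*}D-E|}$ factors through $g$; its image is therefore a curve, so $|f^{*}D-E|$ is composed with a pencil of surfaces.

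For the ``only if'' direction, assume $\dim\phi_{\Sigma}(Y)=1$. As $|f^{*}D-E|$ is base point free, $\phi:=\phi_{|f^{*}D-E|}$ is a morphism of $X$ onto a curve; I would again pass to its Stein factorization $X\xrightarrow{\,g\,}B\to\phi(X)$ with $B$ a smooth curve and $g$ of connected fibres, and write $f^{*}D-E\sim g^{*}A$. Then $H^{0}(X,\cO_X(f^{*}D-E))=H^{0}(B,\cO_B(A))$, and base-point-freeness of $|f^{*}D-E|$ together with surjectivity of $g$ forces $|A|$ to be base point free of positive dimension on $B$. Picking two general sections $\alpha_{1},\alpha_{2}\in H^{0}(B,\cO_B(A))$ — which then have no common zero — and the corresponding general $\sigma_{1},\sigma_{2}\in H^{0}(Y,\cO_{Y}(D)\otimes\cI_{C})$, the defining equations satisfy $f^{*}\sigma_{i}=e_{E}\cdot g^{*}\alpha_{i}$, where $e_{E}$ is a local equation of $E$; hence $(f^{*}\sigma_{1},f^{*}\sigma_{2})\cdot\cO_{X}=(e_{E})\cdot g^{*}(\alpha_{1},\alpha_{2})\cdot\cO_{X}=(e_{E})=\cI_{E}=\cI_{C}\cdot\cO_{X}$. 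A local computation on the blow-up then yields $(\sigma_{1},\sigma_{2})=\cI_{C}$, that is, the two members $V(\sigma_{1}),V(\sigma_{2})\in|D|$ meet exactly along $C$, so $C$ is a complete intersection of members of $|D|$.

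The step I expect to be the crux is scheme-theoretic rather than set-theoretic: one must match ``complete intersection of members of $|D|$'' with ``base scheme of a pencil'' as closed subschemes. In one direction this is the fact that blowing up the smooth curve $C$ resolves the pencil $\langle D_{1},D_{2}\rangle$ with no residual base locus, and in the other it is the implication $(\sigma_{1},\sigma_{2})\cdot\cO_{X}=\cI_{C}\cdot\cO_{X}\Rightarrow(\sigma_{1},\sigma_{2})=\cI_{C}$, which I would verify by the local blow-up computation first at the generic point of $C$ (a regular local ring of dimension two, where $\cI_C$ is the maximal ideal) and then at every point of $C$. It is precisely here that the hypothesis that $C$ is a smooth curve is used: it makes $\cI_{E}$ an invertible ideal and $E\to C$ a projective bundle, so that the pullback and pushforward of these ideals are easy to control.
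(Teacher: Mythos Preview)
Your proof is correct but follows a genuinely different route from the paper's. For the ``only if'' direction, the paper gives a short intersection-theoretic argument: since $|f^*D-E|$ is composed with a pencil, the numerical dimension of $f^*D-E$ is $1$, so for any ample $A$ on $Y$ one has $(f^*D-E)^2\cdot f^*A=0$; expanding this yields $D^2\cdot A=-E^2\cdot f^*A=A\cdot C$. One then picks \emph{any} two members $D_1,D_2\in|D|$ containing $C$ and without common components (this is where condition~(1) of Definition--Proposition~\ref{intersection of members} is invoked), observes that $A\cdot(D_1\cdot D_2)=D^2\cdot A=A\cdot C$, and concludes from ampleness of $A$ that $C=D_1\cap D_2$ as $1$-cycles; smoothness of $C$ then upgrades this to a scheme-theoretic equality.

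By contrast, your argument is sheaf-theoretic and constructive: you pass to the Stein factorization $g\colon X\to B$, identify $f^*D-E\sim g^*A$, and pull back two sections of $A$ with disjoint zeros to produce two specific members of $|D|$ whose strict transforms are disjoint on $X$; the local blow-up computation then shows they generate $\cI_C$. The paper's approach is shorter and avoids any local calculation, reducing the whole question to a single degree comparison. Your approach, on the other hand, actually exhibits the two divisors as coming from the base curve of the fibration (rather than as arbitrary members through $C$), and makes the role of the smoothness hypothesis transparent in the determinant step of the local computation. Both arguments use the codimension-two hypothesis in the same essential way at the end: the paper to pass from cycle equality to scheme equality, and you to carry out the $2\times 2$ local computation.
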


\begin{proof}
	One implication is clear. Now we assume that $\vert f^*D-E\vert$ is composed with a pencil of surfaces. Let $A$ be an ample divisor over $Y$. Then the pull-back $f^*A$ is nef and big. Since $\vert f^*D-E\vert$ is composed with a pencil of surfaces, the numerical dimension of $f^*D-E$ is $1$. In particular, we have $(f^*D-E)^2\cdot f^*A=0$. Then it yields
	\[D^2\cdot A=(f^*D)^2\cdot f^*A=-E^2\cdot f^*A=A\cdot C.\]
	Since $H^0(Y,\cO_Y(D)\otimes\cI_C)\otimes\cO_Y\rightarrow \cO_Y(D)\otimes\cI_C$ is surjective, there exist $D_1, D_2\in \vert D\vert$ without common components such that $C\subset D_1\cap D_2$ as sets. On the other hand, we have
	\[A\cdot C=D^2\cdot A=D_1\cdot D_2\cdot A.\]
	As $A$ is ample, we obtain $C=D_1\cap D_2$ as $1$-cycles. Since $C$ is smooth, we get $C=D_1\cap D_2$ as scheme-theoretical complete intersections.
\end{proof}

The following theorem plays a key role in the proof of Theorem \ref{Intro Fano threefolds Picard number large Seshadri}. It is claimed in \cite{MoriMukai1981/82} and the proof is provided in \cite{MoriMukai1986}.

\begin{thm}\cite[Theorem 3]{MoriMukai1986}\label{existence of free splitting}
	Let $X$ be a nonsingular Fano threefold with $\rho(X)\geq 2$, then the anticanonical divisor $-K_X$ has a splitting. Furthermore, $-K_X$ has a free splitting if $\vert-K_X\vert$ is base point free.
\end{thm}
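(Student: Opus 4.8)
The plan is to prove this statement by induction on the structure of the minimal model program, using the Mori–Mukai classification tables as the organizing principle, but trying to extract the splitting in as conceptual a way as possible so as to minimize case analysis. The key input is that a Fano threefold $X$ with $\rho(X)\geq 2$ admits an extremal contraction $\varphi\colon X\to Y$ of one of three types (fiber type, divisorial, or small — but small contractions do not occur on smooth threefolds), and I would treat these in turn. In the divisorial case, $\varphi$ is the blow-up of a smooth Fano threefold $Y$ (or of $\bbP^3$, or of a quadric) along a smooth point or a smooth curve $C$; writing $E$ for the exceptional divisor and using $-K_X = \varphi^*(-K_Y) - aE$ for the appropriate discrepancy $a$, the task reduces to producing an effective decomposition of $-K_Y$ on $Y$ and pulling it back, then absorbing $E$ — and here Definition-Proposition \ref{intersection of members} together with Lemma \ref{Criterion-Complete-Intersection} is exactly the tool that says when $|\varphi^*D - E|$ is base point free, namely when $C$ is cut out by members of $|D|$. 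In the fiber-type case, $\varphi\colon X\to Y$ with $\dim Y \in \{0,1,2\}$; when $\dim Y\geq 1$ one gets a splitting essentially for free by pulling back a divisor from the base (a fiber of a conic bundle or a del Pezzo fibration, or the preimage of a hyperplane section), and the delicate sub-case is $\dim Y = 2$ where $Y$ is a del Pezzo surface and one must combine a pullback $\varphi^*(-K_Y)$-type class with the relative anticanonical class.

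For the free-splitting half of the statement I would argue as follows: if $|-K_X|$ is base point free, then in particular $-K_X$ is nef and big and semiample, so by Bertoni-type / Reider-type reasoning on the threefold, a general member $S \in |-K_X|$ is a (possibly mildly singular, but in the smooth-Fano case one checks smooth) K3 surface, and more importantly the base-point-freeness propagates: one shows that in the splitting $-K_X \sim D_1 + D_2$ produced above, each $D_i$ can be chosen with $|D_i|$ base point free. Concretely, I would run through the same trichotomy and at each stage note that base-point-freeness of $|-K_X|$ forces base-point-freeness of the relevant divisor on $Y$ (for instance, in the blow-up case, $|-K_X| = |\varphi^*(-K_Y) - aE|$ being bpf forces, via the projection formula and the structure of $\varphi$, that the pieces downstairs are bpf and that $C$ is scheme-theoretically cut out by the divisor classes being used), and then the decomposition assembled upstairs is automatically free.

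The heavy lifting is the explicit verification that in each row of the Mori–Mukai tables the prescribed $D_1, D_2$ actually exist and actually satisfy the base-point-freeness criterion. I would organize this by first disposing of the "easy" contractions (conic bundles and del Pezzo fibrations over curves, products, and blow-ups along curves that are obviously complete intersections such as lines, conics, and plane curves in $\bbP^3$) by the general arguments above, and only then going case by case through the remaining finite list — tracking for each $X$ an extremal contraction, the invariants $(-K_X)^3$ and $\rho(X)$, and a concrete pair $D_1, D_2$. Since the paper only needs the existence of a splitting (and a free splitting under the bpf hypothesis), I would not need to identify $|D_i|$ precisely, only to exhibit effective representatives and, when bpf is required, verify the criterion of Definition-Proposition \ref{intersection of members}.

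The main obstacle I expect is the $\dim Y = 2$ fiber-type case and, within the divisorial case, the blow-ups along curves of higher genus or degree where $C$ is \emph{not} obviously a complete intersection of members of the natural linear system: there Lemma \ref{Criterion-Complete-Intersection} tells us that $|\varphi^*(-K_Y)-E|$ will then be composed with a pencil rather than being a genuine free linear system, so one must instead manufacture the free splitting from a \emph{different} linear system on $Y$ (or use a different extremal ray of $X$), and checking that such an alternative always exists is precisely where the finiteness of the Mori–Mukai list, rather than a uniform argument, becomes indispensable. In other words, the conceptual part handles most families uniformly, but a genuinely exhaustive (if routine) table check is unavoidable for the residual cases.
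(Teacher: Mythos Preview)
The paper does not supply its own proof of this statement: it is quoted as \cite[Theorem 3]{MoriMukai1986}, with the accompanying remark that the result ``is claimed in \cite{MoriMukai1981/82} and the proof is provided in \cite{MoriMukai1986}.'' So there is no in-paper argument to compare your proposal against; the paper simply invokes the external reference and uses the conclusion.

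That said, your proposed strategy is in spirit the one carried out in the cited source: Mori and Mukai organize the proof around the extremal contractions of $X$ and then work systematically through the classification tables, exhibiting an explicit splitting (and, under the base-point-free hypothesis, a free splitting) in each deformation family. Your trichotomy into fiber-type and divisorial contractions, followed by a residual table check, is exactly the shape of their argument, and your observation that the finiteness of the Mori--Mukai list is genuinely needed (rather than a uniform argument) is correct.

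One point to correct: you have the direction of Lemma \ref{Criterion-Complete-Intersection} reversed. That lemma says $\vert f^*D - E\vert$ is composed with a pencil of surfaces precisely when $C$ \emph{is} a complete intersection of members of $\vert D\vert$, not when it fails to be. Moreover, being composed with a pencil is not an obstruction to freeness; it merely says the induced morphism has one-dimensional image. Base-point-freeness of $\vert f^*D-E\vert$ comes from Definition-Proposition \ref{intersection of members}, which only requires $C$ to be an \emph{intersection} (not a complete intersection) of members of $\vert D\vert$. So the obstacle you flag at the end is misdescribed: the real issue in the residual cases is whether the blown-up curve is cut out set-theoretically by the chosen linear system at all, and when it is not, one must switch to a different linear system or a different extremal ray --- which is precisely where the case-by-case work in \cite{MoriMukai1986} becomes indispensable.
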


\subsection{Morphisms induced by splittings} 

This subsection is devoted to study the relation between $\varepsilon(X,-K_X;1)$ and the maps induced by splittings of $-K_X$. We begin with a simple but useful observation.

\begin{lemma}\label{Contraction-Curve}
	Let $X$ be an $n$-dimensional Fano manifold, and let $g\colon X\rightarrow Y$ be a surjective morphism with connected fibers onto a normal projective variety $Y$. Let $x\in X$ be a very general point, and let $C$ be an irreducible curve passing through $x$ and contracted by $g$. If $\dim(Y)=n-1$, then $-K_X\cdot C=2\mult_x C$.
\end{lemma}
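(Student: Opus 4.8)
The plan is to identify the curve $C$ with the fibre of $g$ through $x$, and then to recognise that fibre as a smooth rational curve. Since $g$ is dominant and we work in characteristic zero, generic smoothness provides a dense open subset $V\subseteq Y$, which we may take inside the smooth locus of $Y$, over which $g$ is smooth. For $x$ in the dense open subset $g^{-1}(V)$ — and in particular for $x$ very general — the scheme-theoretic fibre $F:=g^{-1}(g(x))$ is then smooth of pure dimension $\dim X-\dim Y=1$; as all fibres of $g$ are connected by hypothesis, $F$ is a smooth irreducible curve.

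Next I would observe that $C$, being contracted by $g$, maps to a point, so that $C\subseteq g^{-1}(g(x))=F$ because $x\in C$; since $C$ and $F$ are both irreducible curves, this forces $C=F$. In particular $C$ is smooth at $x$, so $\mult_x C=1$, and the assertion reduces to the equality $-K_X\cdot C=2$.

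To finish, I would compute $-K_X\cdot F$ by adjunction. As $g$ is smooth along $F$ and contracts it to a smooth point $y$ of $Y$, the conormal bundle satisfies $N^*_{F/X}\cong g^*\Omega_Y|_F\cong\cO_F^{\oplus(n-1)}$, so $\det N_{F/X}\cong\cO_F$ and adjunction gives $K_F\cong K_X|_F\otimes\det N_{F/X}\cong K_X|_F$. Since $-K_X$ is ample, $-K_F=-K_X|_F$ is ample, hence $F$ is a smooth projective curve of genus $0$, i.e. $F\cong\bbP^1$; therefore
\[-K_X\cdot C=-K_X\cdot F=\deg(-K_F)=2=2\mult_x C.\]

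I do not expect a serious obstacle here. The only point that requires care is the first paragraph: one needs that for a general — hence also for a very general — point the scheme-theoretic fibre is a \emph{smooth irreducible} curve, so that every curve through $x$ contracted by $g$ must equal that fibre. This is precisely where generic smoothness and the connected-fibres hypothesis are used; the remaining adjunction computation is routine.
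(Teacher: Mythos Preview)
Your proof is correct and follows essentially the same approach as the paper: identify $C$ with the generic (hence smooth, irreducible) fibre, then use that $-K_X$ is ample together with adjunction to conclude the fibre is $\bbP^1$ and $-K_X\cdot C=2$. The paper's version is simply terser, compressing your adjunction computation into the single sentence ``As $-K_X$ is ample, $C$ is a rational curve.''
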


\begin{proof}
	By genericity assumption, we may assume that the fiber of $g$ passing through $x$ is irreducible and smooth. In particular, as $\dim(Y)=n-1$, $C$ is exactly the fiber of $f$ over $x$. As $-K_X$ is ample, $C$ is a rational curve. Hence, we have $-K_X\cdot C=2=2\mult_x C$ by the smoothness of $C$. 
\end{proof}

Now we can describe the structure of nonsingular Fano threefolds with small $\varepsilon(X,-K_X;1)$.

\begin{thm}\label{Morphisms-Splittings}
	Let $X$ be a nonsingular Fano threefold with $\rho(X)\geq 2$. If $\vert-K_X\vert$ is base point free, then $\varepsilon(X,-K_X;1)>1$. Moreover, if $\varepsilon(X,-K_X;1)<2$, for any free splitting $-K_X=D_1+D_2$, after exchanging $D_1$ and $D_2$ if necessary, one of the following holds.
	\begin{enumerate}
		\item $\varepsilon(X,-K_X;1)=4/3$, and $\vert D_1\vert$ induces a del Pezzo fibration $X\rightarrow \bbP^1$ of degree $2$.
		
		\item $\varepsilon(X,-K_X;1)=3/2$, and $\vert D_1\vert$ induces a del Pezzo fibration $X\rightarrow \bbP^1$ of degree $3$.
	\end{enumerate}
\end{thm}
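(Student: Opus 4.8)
The plan is to produce, for a very general point $x\in X$, an explicit curve $C_x$ through $x$ realising a small anticanonical quotient, forcing $\varepsilon(X,-K_X;1)\le 1$ unless $|-K_X|$ is base point free; and conversely, when $\varepsilon(X,-K_X;1)<2$, to reconstruct the fibration structure from the free splitting guaranteed by Theorem \ref{existence of free splitting}. For the first assertion I would argue by contradiction: if $\varepsilon(X,-K_X;1)\le 1$, then through a very general $x$ there is an irreducible curve $C$ with $(-K_X\cdot C)\le \mult_x C$. Since $-K_X$ is ample, $-K_X\cdot C\ge 1$, so $\mult_x C\ge 1$ and in fact $-K_X\cdot C=\mult_x C=1$; thus $C$ is a line through a very general point. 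By Remark \ref{covered by lines open property}, $(X,-K_X)$ is then covered by lines, so $r_X=1$ and, more importantly, $|-K_X|$ cannot be base point free: a free anticanonical system on a Fano threefold covered by lines would contract the lines, contradicting that $-K_X$ is ample on them (this is where I would invoke the splitting machinery, or a direct degree computation $-K_X\cdot\ell=1$ against base-point-freeness). This proves $\varepsilon(X,-K_X;1)>1$ whenever $|-K_X|$ is base point free.

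For the main part, assume $1<\varepsilon(X,-K_X;1)<2$ and fix a free splitting $-K_X=D_1+D_2$ (Theorem \ref{existence of free splitting}). The strategy is to analyse the morphism $\phi\colon X\to \mathbb{P}^m$ attached to $|D_1|$ (and likewise $|D_2|$), distinguishing by $\dim\phi(X)$. If $\dim\phi(X)=1$, then $\phi$ is, up to Stein factorisation, a fibration $X\to\mathbb{P}^1$ whose general fibre $F$ is a smooth surface; since $D_1$ is (a multiple of) the fibre class, $-K_F=(-K_X-D_1-D_2)|_F+(\text{fibre corrections})$ — more precisely $-K_F=-K_X|_F-D_1|_F=D_2|_F$, so $F$ is a del Pezzo surface of degree $d=D_2^2\cdot F=D_2^2\cdot D_1$. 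Running a fibre curve $C$ through a very general $x$ and applying Lemma \ref{Contraction-Curve} gives $-K_X\cdot C=2\mult_x C$; combining with the sub-adjunction/degree bookkeeping on $F$ and the universal bound $\varepsilon(X,-K_X;1)\le\ell_X$, together with Broustet's del Pezzo values from Theorem \ref{Broustet's theorem} applied inside $F$ (a minimal rational curve on $F$ meeting $D_1|_F$ appropriately pulls back), pins $\varepsilon(X,-K_X;1)$ to $4/3$ when $d=2$ and $3/2$ when $d=3$; the cases $d=1$ would give $\varepsilon=1$, excluded, and $d\ge 4$ would give $\varepsilon\ge 2$, also excluded. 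If instead $\dim\phi(X)\ge 2$, I would show this is incompatible with $\varepsilon(X,-K_X;1)<2$: a generic fibre of $\phi$ is then a curve or a point, and one extracts a curve $C$ through a very general $x$ with $-K_X\cdot C\ge 2\mult_x C$ on the one hand while the splitting gives $D_i\cdot C\ge\mult_x C$ for $i=1,2$ (using base-point-freeness of both $|D_i|$, so each $D_i\cdot C\ge \mult_x C$ whenever $C$ is not contracted), forcing $-K_X\cdot C\ge 2\mult_x C$ with the reverse inequality coming from $\varepsilon<2$ — hence $\varepsilon(X,-K_X;1)\ge 2$, a contradiction, unless a fibre is contracted, which loops back to the fibration case.

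The delicate points are two. First, I must be careful that the free splitting is essentially unique up to the swap $D_1\leftrightarrow D_2$ and that the only free splittings on such $X$ are of ``del Pezzo fibration'' type — this requires the classification input of Mori–Mukai (Theorem \ref{existence of free splitting} and the structure theory behind it), so I would phrase the argument as: \emph{any} free splitting with $\dim\phi_{|D_1|}(X)=1$ produces a del Pezzo fibration of degree $D_1\cdot D_2^2$, and then check that degrees $\ge4$ are numerically excluded by $\varepsilon<2$. Second, the sharp evaluation of the Seshadri constant on the del Pezzo fibre — i.e. that a curve minimising $(-K_X\cdot C)/\mult_x C$ through a very general $x$ can be taken inside the fibre $F$ — needs a short argument that any competing curve not contained in $F$ has $-K_X\cdot C\ge D_1\cdot C+D_2\cdot C\ge 1+\mult_x C\ge 2\mult_x C$ when $\mult_x C=1$, and $\ge\mult_x C+\mult_x C$ in general via base-point-freeness, so that the infimum is governed by fibre curves, where Broustet's computation applies verbatim to $-K_F=D_2|_F$. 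I expect this last reduction — showing the minimising curve lies in a fibre — to be the main obstacle, and it is exactly where base-point-freeness of \emph{both} halves of the splitting is essential.
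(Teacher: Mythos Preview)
Your treatment of the main clause (when $\varepsilon(X,-K_X;1)<2$) is essentially the paper's argument, reorganised: you split according to $\dim\phi_{|D_i|}(X)$, while the paper splits according to whether the test curve $C$ is contracted by $g_1$, by $g_2$, or by neither. In both versions the point is that if $C$ is contracted by neither map then base-point-freeness gives $-K_X\cdot C\ge 2\mult_x C$, if it is contracted to a curve Lemma~\ref{Contraction-Curve} gives equality, and if it lies in a del Pezzo fibre $S$ then $-K_X\cdot C=-K_S\cdot C\ge\varepsilon(S,-K_S;1)\mult_x C$ and Broustet's values take over. Your worry about uniqueness of the splitting is misplaced (the statement concerns an \emph{arbitrary} free splitting), and your worry about whether the minimising curve lies in a fibre is precisely what the ``neither contracted'' case disposes of.

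Your argument for $\varepsilon(X,-K_X;1)>1$, however, has a real gap. First, from $\varepsilon\le 1$ you cannot conclude $-K_X\cdot C=\mult_x C=1$: even granting a curve with $-K_X\cdot C\le\mult_x C$, nothing forces both numbers to equal~$1$ (e.g.\ $-K_X\cdot C=2$, $\mult_x C=2$ is compatible). Second, and more seriously, the claim that a base-point-free $|-K_X|$ ``would contract the lines'' is simply false: a line $\ell$ has $-K_X\cdot\ell=1>0$, so $\Phi_{|-K_X|}$ is finite on $\ell$, not contracting it. There is no contradiction between being covered by lines and $|-K_X|$ being free. The paper instead obtains $\varepsilon>1$ from the \emph{same} three-case analysis you already set up: the three inequalities give $\varepsilon(X,-K_X;1)\ge\min\bigl(2,\,\varepsilon(S,-K_S;1)\bigr)\ge 1$, with equality forcing some $g_i$ to be a del Pezzo fibration of degree~$1$. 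But then for the general fibre $S$ one has $-K_S=D_j|_S$ with $j\ne i$, and since $|D_j|$ is free so is $|D_j|_S|\subseteq|-K_S|$, contradicting that $|-K_S|$ has a base point on a degree-$1$ del Pezzo surface. Replace your first paragraph with this argument and the proof goes through.
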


\begin{proof}
	Fix a free splitting $-K_X=D_1+D_2$ (cf. Theorem \ref{existence of free splitting}). Let $g_i\colon X\rightarrow Y_i$ be the morphism induced by the free linear system $\vert D_i\vert$. Moreover, by Stein factorization and generic smoothness, we shall assume that the general fiber of $g_i$ is irreducible and smooth. Let $x\in X$ be a very general point, and let $C$ be an irreducible curve passing through $x$. There are three different possibilities for $C$.
	
	\begin{enumerate}
		\item The curve $C$ is not contracted by $g_1$ nor $g_2$.
		
		\item The curve $C$ is contracted by $g_1$ (resp. $g_2$) and $g_1$ (resp. $g_2$) is a fibration in curves.
		
		\item The curve $C$ is contracted by $g_1$ (resp. $g_2$) and $g_1$ (resp. $g_2$) is a fibration in surfaces.
	\end{enumerate}
	
	In case (1), by the freeness of $\vert D_1\vert$ and $\vert D_2\vert$, we can find $\widetilde{D}_1\in\vert D_1\vert$ and $\widetilde{D}_2\in\vert D_2\vert$ both passing through $x$ and not containing $C$. In particular, we have 
	\begin{equation}\label{Non-Contraction}
	-K_X\cdot C=(\widetilde{D}_1+\widetilde{D}_2)\cdot C\geq 2 {\rm{mult}_x C}.
	\end{equation}
	
	In case (2), without loss of generality, we may assume that $C$ is contracted by $g_1$. By Lemma \ref{Contraction-Curve}, we have 
	\begin{equation}\label{Contraction-To-Curve}
	-K_X\cdot C=2\rm{mult}_x C.
	\end{equation}
	
	In case (3), without loss of genericity, we shall assume that $C$ is contracted by $g_1$. Let $S$ be the fiber of $g_1$ passing through $x$. Since $x$ is very general, we can assume that $S$ is a general fiber of $g_1$. As $-K_X$ is ample, $S$ is a smooth del Pezzo surface and we have 
	\begin{equation}\label{Contraction-To-Surface}
	-K_X\cdot C=-K_S\cdot C \geq \varepsilon(S,-K_S;1)\rm{mult}_x C.
	\end{equation}
	
	As a consequence, $\varepsilon(X,-K_X;1)<2$ only if one of $g_1$ and $g_2$ is a fibration in del Pezzo surfaces of degree at most $3$ (cf. Theorem \ref{Broustet's theorem}). Moreover, if $\varepsilon(X,-K_X;1)=1$, then one of $g_1$ and $g_2$ is a fibration in del Pezzo surfaces of degree $1$. However, the anticanonical linear system $\vert-K_S\vert$ of a del Pezzo surface $S$ of degree $1$ is not base point free, we get a contradiction. Thus we have always $\varepsilon(X,-K_X;1)>1$. 
	
	On the other hand, note that $-K_X$ is ample and $-K_X=D_1+D_2$, so there are no curves contracted by both $g_1$ and $g_2$. In particular, at most one of $g_1$ and $g_2$ is a fibration in surfaces. Combined with \eqref{Non-Contraction}, \eqref{Contraction-To-Curve} and \eqref{Contraction-To-Surface}, Theorem \ref{Broustet's theorem} implies that if $\varepsilon(X,-K_X;1)<2$, then either $\varepsilon(X,-K_X;1)=4/3$ and one of $g_1$ and $g_2$ is a fibration of del Pezzo surfaces of degree $2$, or $\varepsilon(X,-K_X;1)=3/2$ and one of $g_1$ and $g_2$ is a fibration of del Pezzo surfaces of degree $3$.
\end{proof}

In the case where $\vert-K_X\vert$ is not base point free, we have the following well-known classification result.

\begin{thm}\cite[Theorem 2.4.5]{Shafarevich1999}\label{Non-Basepoint-Free}
	Let $X$ be a nonsingular Fano threefold. Then the linear system $\vert-K_X\vert$ is base point free except for the following two cases.
	\begin{enumerate}
		\item The blow-up of $V_1$ along an elliptic curve which is an intersection of two divisors from $\vert -\frac{1}{2}K_{V_1}\vert$, where $V_1$ is a smooth del Pezzo threefold of degree $1$, i.e. $\gimel(X)=2.1$.
		
		\item $\bbP^1\times S_1$, where $S_1$ is a smooth del Pezzo surface of degree $1$, i.e. $\gimel(X)=10.1$.
	\end{enumerate}
\end{thm}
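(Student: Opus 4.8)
The plan is to locate $\Bs\vert-K_X\vert$ by combining Shokurov's smoothness theorem for a general anticanonical member with the classification of linear systems on K3 surfaces, and then to recognise the two exceptional families through the Mori--Mukai classification of extremal contractions (which also underlies Theorem~\ref{existence of free splitting}). The case $\rho(X)=1$ produces no exception: by the classification of Fano threefolds with Picard number one, if $r_X\geq 2$ then $X$ is $\bbP^3$, a quadric, or a del Pezzo threefold $V_d$ with $1\leq d\leq 5$, and $\vert-K_X\vert=\vert r_XH\vert$ is base point free in each case (for $V_1$ this is seen on the model $V_1\subset\bbP(1^3,2,3)$, where the degree-$2$ monomials in the weight-$1$ coordinates together with the weight-$2$ coordinate have no common zero on $V_1$); and if $r_X=1$ the anticanonical model realises $-K_X$ as either very ample ($g\geq 3$) or, for $g=2$, the pull-back of $\cO_{\bbP^3}(1)$ under a double cover, hence base point free in all cases. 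So assume $\rho(X)\geq 2$.

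Recall $h^0(X,-K_X)=\tfrac12(-K_X)^3+3\geq 3$ and that $H^i(X,\cO_X)=H^i(X,\cO_X(-K_X))=0$ for $i>0$. By Shokurov's theorem a general member $S\in\vert-K_X\vert$ is a smooth K3 surface with $K_S\sim 0$; from $0\to\cO_X\to\cO_X(-K_X)\to\cO_S(-K_X\vert_S)\to 0$ and $H^1(X,\cO_X)=0$ the restriction $H^0(X,-K_X)\twoheadrightarrow H^0(S,-K_X\vert_S)$ is surjective, so $\Bs\vert-K_X\vert\subset S$ and $\Bs\vert-K_X\vert=\Bs\vert D\vert$ where $D:=-K_X\vert_S$ is ample on $S$ with $D^2=(-K_X)^3>0$. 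By Saint-Donat's description of linear systems on a K3 surface, $\vert D\vert$ is base point free unless $D\sim aE+\Gamma$, where $E$ is the class of an elliptic pencil, $\Gamma$ a smooth rational $(-2)$-curve, $E\cdot\Gamma=1$ and $a=\tfrac12 D^2+1$, in which case $\Bs\vert D\vert=\Gamma$. Thus either $\vert-K_X\vert$ is base point free and we are done, or $\Bs\vert-K_X\vert$ is a smooth rational curve $B$ lying on every anticanonical surface, with $-K_X\cdot B=a-2=\tfrac12(-K_X)^3-1\geq 1$, so the genus of $X$ is at least $3$.

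It remains to show that the non-free case forces $\gimel(X)\in\{2.1,10.1\}$. A free splitting $-K_X=D_1+D_2$ would give $\Bs\vert-K_X\vert\subseteq\Bs\vert D_1\vert\cup\Bs\vert D_2\vert=\emptyset$, whereas Theorem~\ref{existence of free splitting} supplies a free splitting whenever $\vert-K_X\vert$ is base point free; hence a nonempty base locus is equivalent to: \emph{no} splitting of $-K_X$ is free, that is, every splitting $-K_X=D_1+D_2$ has one of $\vert D_1\vert,\vert D_2\vert$ not base point free, so that the base curve $B$ is a base curve of one of the factors. Analysing the maps attached to a splitting as in the proof of Theorem~\ref{Morphisms-Splittings}, and running through Mori's list of $K_X$-negative extremal contractions of a smooth threefold under the constraint that such a base curve $B$ exists, one finds that this happens only when $X$ carries a del Pezzo fibration whose general fibre $S_1$ has degree $1$ — the decisive point being that $\vert-K_{S_1}\vert$ fails to be base point free exactly in degree $1$ (cf.\ Theorem~\ref{Broustet's theorem}), so that $\vert-K_X\vert$ inherits a base curve over the base point of $\vert-K_{S_1}\vert$ in the fibres. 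Inspecting the second extremal ray of $X$ then pins down $X$: if it is again of fibre type one gets $X\cong\bbP^1\times S_1$, i.e.\ $\gimel(X)=10.1$; if it is divisorial one gets the blow-down $X\to V_1$ onto a del Pezzo threefold of degree $1$ with centre an elliptic curve $C$, and for the fibre of the exceptional divisor over the base point of $\vert-\tfrac12K_{V_1}\vert$ to lie in $\Bs\vert-K_X\vert$ the curve $C$ must be an intersection of two members of $\vert-\tfrac12K_{V_1}\vert$, i.e.\ $\gimel(X)=2.1$.

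The genuinely hard part is this last step: excluding every other type of extremal contraction and verifying that the two surviving configurations are precisely the families in the statement. This is essentially the Mori--Mukai classification of Fano threefolds with $\rho\geq 2$ (see \cite{MoriMukai1986}) applied to the anticanonical system, and I do not see how to avoid a case-by-case analysis — an honest write-up would either reproduce the relevant portion of that classification or quote the list of the $88$ deformation families and check non-freeness on each. One should also verify that Shokurov's smoothness theorem genuinely applies to $\bbP^1\times S_1$ and to the blow-up of $V_1$ along such a curve $C$ (it does: the general anticanonical member is a smooth K3 in both cases), since the entire reduction rests on the smoothness of $S$.
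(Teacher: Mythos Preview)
The paper does not prove this theorem: it is quoted verbatim from \cite[Theorem 2.4.5]{Shafarevich1999} and used as a black box. So there is no ``paper's own proof'' to compare against.

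Your outline is the standard route to this result (Shokurov's smoothness of a general $S\in|-K_X|$, Saint-Donat's analysis of $|D|$ on the K3 surface $S$ to identify the base curve, then Mori theory to pin down $X$), and the numerical bookkeeping you give is correct. Two points deserve comment. First, invoking Theorem~\ref{existence of free splitting} here is logically delicate: Mori--Mukai's proof of that theorem in \cite{MoriMukai1986} is intertwined with their classification, and the identification of the non-free cases $\gimel(X)\in\{2.1,10.1\}$ is essentially part of the same package; so quoting it to deduce those cases risks circularity rather than providing an independent argument. Second, you are candid that the final step --- ruling out every other extremal configuration --- is the substantive content and that you do not supply it; that is accurate, and it is precisely why the paper (and most papers) cite this theorem rather than prove it. In short: your sketch is the right strategy and honestly flags its own gap, but as written it is an outline of the literature proof rather than a self-contained argument.
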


\begin{cor}\label{1-Non-Basepoint-Freeness}
	Let $X$ be a nonsingular Fano threefold with $\rho(X)\geq 2$. Then the following three statements are equivalent.
	\begin{enumerate}
		\item The linear system $\vert-K_X\vert$ is not base point free.
		
		\item There is a del Pezzo fibration $X\rightarrow \bbP^1$ of degree $1$.
		
		\item $\varepsilon(X,-K_X;1)=1$.
	\end{enumerate}
\end{cor}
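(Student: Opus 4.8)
The plan is to prove the cycle of implications $(1)\Rightarrow(2)\Rightarrow(3)\Rightarrow(1)$, assembling the structural results collected above. For $(1)\Rightarrow(2)$, I would invoke Theorem \ref{Non-Basepoint-Free}, which forces $X$ to be one of two explicit threefolds: if $\gimel(X)=10.1$, that is $X\cong\bbP^1\times S_1$ with $S_1$ a del Pezzo surface of degree $1$, then the first projection is already a del Pezzo fibration of degree $1$.

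In the remaining case $\gimel(X)=2.1$, $X$ is the blow-up $f\colon X\to V_1$, with exceptional divisor $\mathcal E$, of a del Pezzo threefold $V_1$ of degree $1$ along an elliptic curve $E$ that is the complete intersection of two members $H_1,H_2$ of $|H|$, where $H=-\frac{1}{2}K_{V_1}$, so that $-K_{V_1}\sim 2H$ and $H^3=1$. Since $E$ is this complete intersection, $|f^*H-\mathcal E|$ is base point free by Definition-Proposition \ref{intersection of members} and is composed with a pencil of surfaces by Lemma \ref{Criterion-Complete-Intersection}; hence the strict transforms of the members of the pencil $\langle H_1,H_2\rangle\subset|H|$ are the fibers of a fibration $\pi\colon X\to\bbP^1$. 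A general member $S$ of $\langle H_1,H_2\rangle$ is smooth and contains $E$ as a Cartier divisor, so its strict transform is isomorphic to $S$, and adjunction on $V_1$ gives $-K_S=(2H-H)|_S=H|_S$, which is ample with $(-K_S)^2=H^3=1$. Thus $\pi$ is a del Pezzo fibration of degree $1$.

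For $(2)\Rightarrow(3)$, I would take a del Pezzo fibration $\pi\colon X\to\bbP^1$ of degree $1$ and a very general point $x\in X$; the fiber $S$ through $x$ is then a general, hence smooth, del Pezzo surface with $(-K_S)^2=1$, and adjunction gives $-K_X|_S=-K_S$ because the fiber has trivial normal bundle. Every irreducible curve $C\subset S$ through $x$ is also an irreducible curve of $X$ through $x$ with $-K_X\cdot C=-K_S\cdot C$ and the same multiplicity at $x$, so
\[\varepsilon(X,-K_X;x)\le\varepsilon(S,-K_S;x)\le\varepsilon(S,-K_S;1)=1\]
by Theorem \ref{Broustet's theorem}(1) (or by the trivial bound $\varepsilon(S,-K_S;x)\le\sqrt{(-K_S)^2}=1$). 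On the other hand $\varepsilon(X,-K_X;1)\ge r_X\ge 1$ by Theorem \ref{Intro Lowerbound}, and $\varepsilon(X,-K_X;1)=\varepsilon(X,-K_X;x)$ since $x$ is very general; hence $\varepsilon(X,-K_X;1)=1$. Finally, $(3)\Rightarrow(1)$ is the contrapositive of the first assertion of Theorem \ref{Morphisms-Splittings}: if $|-K_X|$ is base point free then $\varepsilon(X,-K_X;1)>1$, so $\varepsilon(X,-K_X;1)=1$ implies $|-K_X|$ is not base point free.

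The main obstacle is the case $\gimel(X)=2.1$ in the implication $(1)\Rightarrow(2)$: one must correctly read off the degree-one del Pezzo fibration from the blow-up, which relies on a few standard but not purely formal facts about del Pezzo threefolds of degree $1$ (that $\langle H_1,H_2\rangle$ is a pencil with base scheme exactly $E$, that its general member is smooth, and that $E$ is then an elliptic curve of $H$-degree $1$). Everything else is routine: $(2)\Rightarrow(3)$ needs only the observations that a very general point of $X$ lies on a general fiber of $\pi$ and that the Seshadri constant cannot increase upon restricting to the surface $S$, and $(3)\Rightarrow(1)$ is immediate from the already-established Theorem \ref{Morphisms-Splittings}.
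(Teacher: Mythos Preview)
Your proof is correct and follows essentially the same route as the paper: both prove the cycle $(1)\Rightarrow(2)\Rightarrow(3)\Rightarrow(1)$ using Theorem~\ref{Non-Basepoint-Free} and the pencil $|f^*H-\mathcal E|$ for $(1)\Rightarrow(2)$, restriction to a general fiber together with Broustet's lower bound for $(2)\Rightarrow(3)$, and Theorem~\ref{Morphisms-Splittings} for $(3)\Rightarrow(1)$. Your write-up is slightly more explicit (you spell out the $\gimel(X)=10.1$ case and the adjunction computation for the fiber), but the ingredients and logic are the same.
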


\begin{proof}
	$(1) \implies (2)$. It is enough to consider case (1) of Theorem \ref{Non-Basepoint-Free}. Let $E$ be the exceptional divisor of the blow-up $\pi\colon X\rightarrow V_1$. Let $H$ be the fundamental divisor of $V_1$. Then $C$ is an intersection of members from $\vert H\vert$. Thus the linear system $\vert \pi^*H-E\vert$ is base point free. Moreover, since $C$ is a smooth complete intersection, $\vert \pi^*H-E\vert$ is composed with a pencil of del Pezzo surfaces of degree $H^3=1$.
	
	$(2) \implies (3)$. Let $x\in X$ be a very general point. Thanks to \cite[Th\'eor\`eme 1.4]{Broustet2009}, it suffices to show $\varepsilon(X,-K_X;x)\leq 1$. Let $S$ be the fiber of the fibration $X\rightarrow \bbP^1$ containing $x$. By the genericity of $x$ and generic smoothness, $S$ is a smooth del Pezzo surface of degree $1$. By definition, we get
	\[\varepsilon(X,-K_X;x)\leq \varepsilon(S,-K_X\vert_S;x)=\varepsilon(S,-K_S;x)\leq \varepsilon(S,-K_S;1)=1.\]
	The last inequality follows from the lower semi-continuity of Seshadri constant.
	
	$(3) \implies (1)$. It follows directly from Theorem \ref{Morphisms-Splittings}.
\end{proof}

\begin{remark}\label{Splitting-Nonfreeness}
	If $X$ is a nonsingular Fano threefold with $\rho(X)\geq 2$ such that $\vert-K_X\vert$ is not base point free, then there exists a splitting $-K_X=D_1+D_2$ such that $\vert D_1\vert$ is base point free and $D_2$ is nef and big. In fact, we can set $D_1=\pi^*H-E$ for $\gimel(X)=2.1$ and $D_1=p^*\cO_{\bbP^1}(1)$. Then the nefness and bigness of $D_2$ can be easily checked. 
\end{remark}

\subsection{A structure theorem} 

The following example shows that a smooth Fano threefold may be released as two del Pezzo fibrations of different degree. 

\begin{example}
	Let $X=S_d\times\bbP^1$, where $S_d$ is a smooth del Pezzo surface of degree $d$ such that $S_d$ is isomorphic to neither $\bbP^2$ nor a smooth quadric surface $Q^2$. Then there is natural fibration $S_d\rightarrow \bbP^1$ with general fiber $\bbP^1$. Denote the induced fibration $X\rightarrow\bbP^1$ by $p_1$ and the second projection $S_d\times\bbP^1\rightarrow\bbP^1$ by $p_2$. Then the general fiber of $p_1$ is a smooth quadric surface, while the fiber of $p_2$ is a del Pezzo surface of degree $d$.
\end{example}

\begin{lemma}\label{Fibration-Different}
	Let $X$ be a nonsingular Fano threefold such that $\bs\vert -K_X\vert\not=\emptyset$. If $X$ admits a del Pezzo fibration of degree $d\geq 2$, then $d\geq 4$.
\end{lemma}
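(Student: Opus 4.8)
The plan is to combine the non-freeness of $\vert-K_X\vert$ with elementary intersection theory on the threefold. First, since $\bs\vert-K_X\vert\neq\emptyset$, Theorem \ref{Non-Basepoint-Free} forces $\gimel(X)\in\{2.1,10.1\}$; in particular $\rho(X)\geq 2$, so Remark \ref{Splitting-Nonfreeness} applies and provides a splitting $-K_X=D_1+D_2$ with $\vert D_1\vert$ base point free and $D_2$ nef and big. Examining the two families exactly as in the proof of Corollary \ref{1-Non-Basepoint-Freeness}, one sees that $\vert D_1\vert$ is actually a base point free pencil whose associated morphism $q\colon X\to\bbP^1$ is a del Pezzo fibration of degree $1$; write $F=D_1$ for the class of a general (smooth) fibre $S$ of $q$, so that $S$ is a smooth del Pezzo surface of degree $1$ and, by adjunction, $-K_X\vert_S=-K_S$.

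Next, let $p\colon X\to\bbP^1$ be the given del Pezzo fibration of degree $d\geq 2$, with general fibre $S'$; set $F'=[S']$, so that $(F')^2=0$ and, by adjunction, $(-K_X)^2\cdot F'=(-K_{S'})^2=d$. Since $d\neq 1$, the fibration $p$ cannot factor through $q$ (both have connected fibres, so any factorization would make the two fibrations coincide and force $1=d$). Hence for a general fibre $S$ of $q$ the restriction $p\vert_S\colon S\to\bbP^1$ is non-constant, so $\Gamma:=S\cap S'\neq\emptyset$ is a curve, equal for general $S,S'$ to the smooth general fibre of $p\vert_S$. Taking the Stein factorization $S\to C\to\bbP^1$ of $p\vert_S$, the curve $\Gamma$ is a disjoint union of $k\geq 1$ general fibres $\Gamma_1,\dots,\Gamma_k$ of $S\to C$, each with $\Gamma_j^2=0$ in $S$; adjunction on the del Pezzo surface $S$ then gives $-K_S\cdot\Gamma_j=2-2p_a(\Gamma_j)=2$, using that $-K_S$ is ample. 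Therefore $-K_X\cdot F\cdot F'=-K_X\cdot\Gamma=2k\geq 2$.

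Finally, writing $D_2=-K_X-F$ and using $F^2=(F')^2=0$, expand
\[
d=(-K_X)^2\cdot F'=(F+D_2)^2\cdot F'=2\,(F\cdot D_2\cdot F')+D_2^2\cdot F'=2\,(-K_X\cdot F\cdot F')+D_2^2\cdot F'.
\]
Since $D_2$ is nef and $S'$ is an irreducible surface, $D_2^2\cdot F'=D_2^2\cdot[S']\geq 0$, and we conclude $d\geq 2\,(-K_X\cdot F\cdot F')\geq 4$, as claimed.

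The intersection-number expansion and the adjunction bookkeeping are routine; the step that genuinely needs care is the estimate $-K_X\cdot F\cdot F'\geq 2$, i.e. that a general fibre of $q$ and a general fibre of $p$ cannot meet along a curve of anticanonical degree $1$. This is precisely where one uses that the fibres of $q$ are del Pezzo surfaces — so that $-K_S$ is ample and the adjunction inequality forces every component of the intersection to be a $(-K_S)$-conic — together with the fact that $F=D_1$ is the distinguished splitting component of $-K_X$ coming from $\bs\vert-K_X\vert\neq\emptyset$.
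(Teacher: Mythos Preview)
Your proof is correct and takes a genuinely different route from the paper's. Both arguments start from the splitting $-K_X=D_1+D_2$ of Remark~\ref{Splitting-Nonfreeness} with $\vert D_1\vert$ free (giving the degree-$1$ del Pezzo fibration $q$) and $D_2$ nef and big, but they then diverge. The paper restricts the splitting to a general fibre $S'$ of the degree-$d$ fibration $p$, obtaining $-K_{S'}=D_1\vert_{S'}+D_2\vert_{S'}$; it then argues, via the freeness of $\vert D_1\vert_{S'}\vert$, Broustet's lower bound $\varepsilon(S',D_2\vert_{S'};1)\geq 1$ for nef and big divisors, and Lemma~\ref{Contraction-Curve}, that $\varepsilon(S',-K_{S'};1)\geq 2$, whence $d\geq 4$ by Theorem~\ref{Broustet's theorem}. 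You instead work on a general fibre $S$ of $q$ and expand $d=(-K_X)^2\cdot F'$ directly as $2(-K_X\cdot F\cdot F')+D_2^2\cdot F'$, bounding the first term via adjunction on $S$ (each component of $F'\vert_S$ is a $(-K_S)$-conic) and the second via nefness of $D_2$.

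Your argument is more elementary: it avoids Seshadri constants altogether, does not invoke Broustet's Proposition~4.12 or Theorem~\ref{Broustet's theorem}, and in fact uses only the nefness of $D_2$, not its bigness. The paper's approach, on the other hand, fits naturally into the surrounding narrative (the whole section is organised around comparing $\varepsilon(X,-K_X;1)$ with the Seshadri constants of fibres) and feeds directly into the proof of Theorem~\ref{fibration iff Seshadri}(3), where the same ``restrict the splitting and estimate $\varepsilon$ on the fibre'' idea is reused. One small stylistic point: your phrase ``$p$ cannot factor through $q$'' is slightly imprecise---what you actually establish (and use) is that the fibres of $p$ and $q$ are not the same, so that $p\vert_S$ is non-constant for general $S$; the argument you give for this is correct.
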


\begin{proof}
	As explained in Remark \ref{Splitting-Nonfreeness}, there exists a splitting $-K_X=D_1+D_2$ such that $\vert D_1\vert$ is base point free and $D_2$ is nef and big. Moreover, it is easy to see that $\vert D_1\vert$ is composed with a pencil of del Pezzo surfaces of degree $1$. Denote by $g\colon X\rightarrow \bbP^1$ a del Pezzo fibration of degree $d\geq 2$ and let $S$ be a general fiber of $g$. Then $-K_S=D_1\vert_S+D_2\vert_S$ is a splitting. As $\vert D_1\vert$ is base point free, $\vert D_1\vert_S\vert$ is also base point free. Denote by $\pi\colon X\rightarrow Y$ the induced surjective morphism with connected fibers. Then $Y$ is a curve. Let $s\in S$ be a very general point and let $C\subset S$ be an irreducible curve passing through $s$. If $C$ is not contracted by $\pi$, then we can find $D\in \vert D_1\vert_S\vert$ passing through $s$ but not containing $C$. Since $D_2\vert_S$ is a nef and big divisor, by \cite[Proposition 4.12]{Broustet2009}, we get
	\[-K_S\cdot C\geq D\cdot C+\varepsilon(S,D_2\vert_C;1)\textrm{mult}_xC\geq 2\textrm{mult}_x C.\]
	If $C$ is contracted by $\pi$, then we have $-K_S\cdot C=2\mult_x C$ by Lemma \ref{Contraction-Curve}. As a consequence, we obtain $\varepsilon(S,-K_S;s)\geq 2$. Then Theorem \ref{Broustet's theorem} shows $d\geq 4$.
\end{proof}

Now we are in the position to prove the main technique theorem in this paper. It gives the classification of Fano threefolds with $\rho(X)\geq 2$ via Seshadric constant $\varepsilon(X,-K_X;1)$.

\begin{thm}\label{fibration iff Seshadri}
	Let $X$ be a nonsingular Fano threefold with $\rho(X)\geq 2$. 
	\begin{enumerate}
		\item $\varepsilon(X,-K_X;1)=1$ if and only if there is a fibration in del Pezzo surfaces $X\rightarrow\bbP^1$ of degree $1$.
		
		\item $\varepsilon(X,-K_X;1)=4/3$ if and only if there is a fibration in del Pezzo surfaces $X\rightarrow\bbP^1$ of degree $2$.
		
		\item $\varepsilon(X,-K_X;1)=3/2$ if and only if there is a fibration in del Pezzo surfaces $X\rightarrow\bbP^1$ of degree $3$.
		
		\item $\varepsilon(X,-K_X;1)=3$ if and only if $X$ is isomorphic to the blow-up of $\bbP^3$ along a smooth curve $C$ of degree $d$ at most $3$ which is contained in a hyperplane.
		
		\item $\varepsilon(X,-K_X;1)=2$ otherwise.
	\end{enumerate}
\end{thm}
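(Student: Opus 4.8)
The plan is to combine Theorem~\ref{Morphisms-Splittings}, Corollary~\ref{1-Non-Basepoint-Freeness}, and an analysis of which threefolds admit del Pezzo fibrations of small degree, together with the upper bound $\varepsilon(X,-K_X;1)\leq\ell_X$ and a direct computation of $\ell_X$ for the remaining cases. First I would dispose of the case $\bs|-K_X|\neq\emptyset$: by Theorem~\ref{Non-Basepoint-Free} this forces $\gimel(X)\in\{2.1,10.1\}$, and by Corollary~\ref{1-Non-Basepoint-Freeness} this is exactly the class with $\varepsilon(X,-K_X;1)=1$ and with a degree-$1$ del Pezzo fibration; moreover Lemma~\ref{Fibration-Different} tells us that in this case no del Pezzo fibration of degree $2$ or $3$ exists, so statement (1) and the ``$\Leftarrow$'' parts in (2),(3) are consistent on this locus. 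So henceforth assume $|-K_X|$ is base point free, hence (Theorem~\ref{Morphisms-Splittings}) $\varepsilon(X,-K_X;1)>1$, and fix a free splitting $-K_X=D_1+D_2$ (Theorem~\ref{existence of free splitting}).

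Next I would run the trichotomy of Theorem~\ref{Morphisms-Splittings}. If $\varepsilon(X,-K_X;1)<2$ it yields directly that $\varepsilon=4/3$ with a degree-$2$ del Pezzo fibration, or $\varepsilon=3/2$ with a degree-$3$ one; conversely, if $X$ admits a degree-$2$ (resp.\ degree-$3$) del Pezzo fibration $g\colon X\to\bbP^1$, then restricting to a general fibre $S$ and using $\varepsilon(X,-K_X;x)\leq\varepsilon(S,-K_S;x)\leq\varepsilon(S,-K_S;1)=4/3$ (resp.\ $3/2$) from Theorem~\ref{Broustet's theorem}, together with the lower bound $\varepsilon(X,-K_X;1)>1$ and Theorem~\ref{Morphisms-Splittings} which excludes all values in $(1,4/3)\cup(4/3,3/2)\cup(3/2,2)$, pins the value down. (One must check a degree-$2$ and a degree-$3$ fibration cannot coexist on the same $X$; this follows because the two values $4/3,3/2$ they would force are distinct, so it is automatic once the ``only if'' directions are established.) This settles (1),(2),(3).

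It remains to separate $\varepsilon=2$ from $\varepsilon=3$ among the threefolds with $|-K_X|$ free admitting no del Pezzo fibration of degree $\leq 3$. Here the key invariant is $\ell_X$: we always have $\varepsilon(X,-K_X;1)\leq\ell_X\leq 3$ (since $r_X=1$ forces $\ell_X\leq n+1=4$, but a degree-$4$ covering family would give a fibration structure one rules out, or one invokes Theorem~\ref{Intro Dimension at least four Seshadri}-style index bounds). So $\varepsilon(X,-K_X;1)\in\{2,3\}$, and it equals $3$ only if $(X,-K_X)$ carries a covering family of conics, i.e.\ $\ell_X=3$ and no lines through a general point. The blow-up of $\bbP^3$ along a smooth plane curve $C$ of degree $d\leq 3$ (the families $2.28,2.30,2.33$) is exactly the list of Fano threefolds with $\rho\geq 2$ that are not covered by lines and whose minimal covering family has anticanonical degree $3$: on such an $X$ one computes $-K_X\cdot\ell\geq 3$ for every curve $\ell$ through a general point (the strict transforms of lines in $\bbP^3$ not meeting $C$ give conics of anticanonical degree $3$, and anything of lower degree would be contracted by the blow-down to a line meeting $C$, impossible through a general point), while for all other families one exhibits a line through the general point, giving $\varepsilon=2$. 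This last verification — going through the Mori--Mukai tables to confirm that $\{2.28,2.30,2.33\}$ are precisely the $\rho\geq2$ families not covered by lines yet with $\ell_X=3$, and that every other family is covered by lines — is the main obstacle; it is essentially a finite but substantial case-check using the classification, for which I would quote the covering-family data recorded in \cite{MoriMukai1986} and the structure of extremal contractions.
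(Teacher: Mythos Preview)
Your treatment of (1) and (2) matches the paper: Corollary~\ref{1-Non-Basepoint-Freeness} for (1), and for (2) the upper bound $\varepsilon\leq 4/3$ from a degree-$2$ fibre plus base-point-freeness via Lemma~\ref{Fibration-Different} plus the discreteness from Theorem~\ref{Morphisms-Splittings}.

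There is a genuine gap in your argument for (3). From a degree-$3$ fibration you only obtain $\varepsilon\in\{4/3,3/2\}$; to pin down $3/2$ you must exclude the coexistence of a degree-$2$ and a degree-$3$ fibration. Your sentence ``this follows because the two values $4/3,3/2$ they would force are distinct'' is circular: the degree-$3$ fibration does not by itself force $3/2$ until coexistence has been ruled out, so you cannot use the distinctness of the forced values to rule it out. The paper breaks the circle by a direct geometric argument: assuming both fibrations exist, take a free splitting $-K_X=D_1+D_2$ with $\vert D_1\vert$ the degree-$2$ pencil (Theorem~\ref{Morphisms-Splittings}), restrict to a general degree-$3$ fibre $S$, and run the proof of Lemma~\ref{Fibration-Different} on $S$ to get $\varepsilon(S,-K_S;1)\geq 2$, contradicting Theorem~\ref{Broustet's theorem} for a cubic surface.

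For (4) and (5) your route diverges from the paper and is substantially harder than necessary. The paper does not go through the Mori--Mukai tables at this point; it invokes \cite[Theorem 1.4]{CasagrandeDruel2015} to get that $\ell_X\geq 3$ with $\rho\geq 2$ forces $X$ to be a blow-up of $\bbP^3$ along a smooth plane curve of degree $\leq 3$, and then \cite[Theorem 3]{LiuZhuang2018} to get $\varepsilon(X,-K_X;1)=3$ in those three families. Two further issues with your sketch: (i) your bound $\ell_X\leq 3$ follows simply from $\rho\geq 2\Rightarrow X\not\simeq\bbP^3$ and \cite{ChoMiyaokaShepherd-Barron2002}, not from the vaguer reasons you give; (ii) the inequality ``$-K_X\cdot\ell\geq 3$ for every curve $\ell$ through a general point'' does not yield $\varepsilon\geq 3$, since the Seshadri constant involves $\mult_x C$ in the denominator --- you need a genuine lower bound on $-K_X\cdot C/\mult_x C$, which is exactly what the Liu--Zhuang result supplies.
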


\begin{proof}
	The statement (1) follows directly from Corollary \ref{1-Non-Basepoint-Freeness}. To prove (2), by Corollary \ref{1-Non-Basepoint-Freeness} and Theorem \ref{Morphisms-Splittings}, it is enough to show that if $X$ admits a del Pezzo fibration of degree $2$, then $\varepsilon(X,-K_X;1)=4/3$. By definition, we have $\varepsilon(X,-K_X;1)\leq 4/3$ if $X$ admits a del Pezzo fibration of degree $2$. Then Lemma \ref{Fibration-Different} shows that $\vert-K_X\vert$ is base point free. Therefore, according to Theorem \ref{Morphisms-Splittings}, we have $\varepsilon(X,-K_X;1)\geq 4/3$, and consequently $\varepsilon(X,-K_X;1)=4/3$.
	
	To prove (3), by Corollary \ref{1-Non-Basepoint-Freeness} and Theorem \ref{Morphisms-Splittings}, it suffices to show that if $X$ admits a del Pezzo fibration $f\colon X\rightarrow \bbP^1$ of degree $3$, then it does not admit a del Pezzo fibration of degree $\leq 2$. By Lemma \ref{Fibration-Different}, $\vert -K_X\vert$ is base point free. Thus, by Theorem \ref{Morphisms-Splittings}, it remains to exclude the case in which $X$ admits a del Pezzo fibration of degree $2$. If so, then we have $\varepsilon(X,-K_X;1)=4/3$ by (2). Let $-K_X=D_1+D_2$ be a free splitting of $-K_X$. By Theorem \ref{Morphisms-Splittings}, we may assume that $\vert D_1\vert$ is composed with a pencil of del Pezzo surfaces of degree $2$. Let $S$ be a general fiber of $f$. By assumption, $-K_{S}$ has a free splitting of the following form
	\[-K_{S}=-K_X\vert_{S}=D_1\vert_{S}+D_2\vert_{S}.\]
	The slightly modified argument as in the proof of Lemma \ref{Fibration-Different} shows that we have $\varepsilon(S,-K_{S};1)\geq 2$. This contradicts Theorem \ref{Broustet's theorem} and the fact that $S$ is a del Pezzo surface of degree $3$. Hence, $X$ does not admit a del Pezzo fibration of degree $2$.
	
	To prove (4) and (5), if $\ell_X\geq 3$, \cite[Theorem 1.4]{CasagrandeDruel2015} shows that $X$ is isomorphic to the blow-up of $\bbP^3$ along a smooth plane curve of degree $\leq 3$. On the other hand, we always have $\varepsilon(X,-K_X;1)=3$ for such $X$ (see \cite[Theorem 3]{LiuZhuang2018}). If $\ell_X<3$, then we have $\varepsilon(X,-K_X;1)\leq 2$ and the theorem now follows from Theorem \ref{Morphisms-Splittings}, Corollary \ref{1-Non-Basepoint-Freeness} and (1)-(3).
\end{proof}

\begin{remarks}\label{Special-Spliting-Check}
	\begin{enumerate}
		\item The same argument as in the proof of (3) can be modified to show that the del Pezzo fibration in (2) and (3) is unique.
		
		\item According to Theorem \ref{Morphisms-Splittings} and Theorem \ref{fibration iff Seshadri}, to see if a nonsingular Fano threefold $X$ admits a del Pezzo fibration of degree $2$ or $3$, it suffices to check it for the morphisms induced by an arbitrary free splitting of $-K_X$. 
	\end{enumerate}
\end{remarks}

\section{Del Pezzo fibrations of small degree}

This section is devoted to complete the proof of Theorem \ref{Intro Fano threefolds Picard number large Seshadri}. According to Theorem \ref{fibration iff Seshadri}, we need to find out all Fano threefolds admitting a del Pezzo fibration of degree $2$ or $3$. Nevertheless, as explained in Remark \ref{Special-Spliting-Check}, it suffices to consider the morphisms induced by an arbitrary free splitting of $-K_X$.

\subsection{General results}

Following \cite[Proposition 2.12, 2.13 and 2.14]{MoriMukai1986}, we introduce the following notion for the convenience.
\begin{enumerate}
	\item[--] Let $X$ be a nonsingular Fano threefold. We say that $X$ satisfies ($\clubsuit$) if $X$ can be obtained from blowing-up of a nonsingular threefold $Y$ along a smooth (but possibly disconnected) curve $C$, where $C$ is an intersection of members of a complete linear system $\vert L\vert$.
\end{enumerate}

We start by collecting and reformulating some import properties of the morphisms induced by free splittings of $-K_X$. 

\begin{thm}\cite[\S\,7]{MoriMukai1986}\label{Intersetions-Properties}
	Let $X$ be a nonsingular Fano threefold with $\rho(X)\geq 2$. If 
	\begin{multline*}
	\gimel(X)\not\in\{2.2,2.6,2.8,2.18,2.24,2.32,2.34-2.36,3.1-3.3,3.8,\\
	3.17,3.19,3.27,3.28,3.31,4.1,4.10,5.3,6.1-10.1\},
	\end{multline*}
	then $X$ satisfies ($\clubsuit$). Moreover, let $E$ be the exceptional divisor of the blow-up $f\colon X\rightarrow Y$ and set $D_1=f^*L-E$. Then the following statements hold.
	\begin{enumerate}
		\item There exists a divisor $D_2$ such that $-K_X=D_1+D_2$ is a splitting, which is free if $\vert-K_X\vert$ is base point free. Moreover, if $\vert-K_X\vert$ is base point free, then the linear system $\vert D_2\vert$ is composed with a pencil of surfaces if and only if $\gimel(X)=3.5$.
		
		\item The smooth curve $C$ is a complete intersection of members of $\vert L\vert$ if and only if 
		\begin{multline*}
		\gimel(X)\in\{2.1,2.3-2.5,2.7,2.10,2.14,2.15,2.23,2.25,2.28-2.30,2.33,\\
		3.4,3.7,3.11,3.24,3.26,4.4,4.9,5.1\}.
		\end{multline*}
	\end{enumerate} 
\end{thm}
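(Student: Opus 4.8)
This is essentially a repackaging of the case-by-case analysis in \cite[\S\,7]{MoriMukai1986}, and the plan is to combine that analysis with the Mori--Mukai classification of Fano threefolds with $\rho(X)\geq 2$ \cite{MoriMukai1981/82,MoriMukai2003}. For every deformation family $\gimel(X)$ outside the exceptional list, \cite[\S\,7]{MoriMukai1986} exhibits an explicit birational model: a smooth threefold $Y$, a line bundle $L$ on $Y$, a smooth (possibly disconnected) curve $C\subset Y$, and a blow-up $f\colon X\to Y$ along $C$ with exceptional divisor $E$ for which the evaluation map $H^0(Y,\cO_Y(L)\otimes\cI_C)\otimes\cO_Y\to\cO_Y(L)\otimes\cI_C$ is surjective. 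By Definition-Proposition \ref{intersection of members} this says exactly that $C$ is an intersection of members of $\vert L\vert$ and that $\vert f^*L-E\vert$ is base point free, so ($\clubsuit$) holds; the families appearing in the exceptional list (products $\bbP^1\times S$, various scrolls and conic bundles over surfaces, and all families with $\rho(X)\geq 6$) admit no such presentation and are excluded from the statement, so no further cases arise.

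To prove (1), I would put $D_1=f^*L-E$ and use the blow-up formula $K_X=f^*K_Y+E$ to write $-K_X=f^*(-K_Y)-E=D_1+f^*(-K_Y-L)$, so that the natural candidate is $D_2=f^*(-K_Y-L)$. It then remains to check, family by family using the data of \cite[\S\,7]{MoriMukai1986}, that $-K_Y-L$ is a nonzero effective divisor — so that $-K_X=D_1+D_2$ is a genuine splitting in the sense of Definition \ref{Splitting} — and that, when $\vert-K_X\vert$ is base point free, the divisor $-K_Y-L$ is base point free on $Y$, whence $\vert D_2\vert=\vert f^*(-K_Y-L)\vert$ is base point free; together with the base point freeness of $\vert D_1\vert$ noted above, this yields a free splitting. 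Finally, since $D_2$ is pulled back along the birational morphism $f$, the linear system $\vert D_2\vert$ is composed with a pencil of surfaces if and only if $\vert-K_Y-L\vert$ is, and inspecting the list one checks that this happens exactly for $\gimel(X)=3.5$.

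For (2), Lemma \ref{Criterion-Complete-Intersection} reduces the question of whether $C$ is a scheme-theoretic complete intersection of members of $\vert L\vert$ to whether $\vert f^*L-E\vert=\vert D_1\vert$ is composed with a pencil of surfaces; as in the proof of that lemma this is in turn equivalent to the numerical identity $L^2\cdot A=A\cdot C$ for an ample divisor $A$ on $Y$. So I would run through the relevant families, in each case computing $L^2$ and $\deg C$ — or simply reading off from \cite[\S\,7]{MoriMukai1986} whether the constructed curve is the complete intersection of two members of $\vert L\vert$ — and verify that the families for which equality holds are precisely those listed in (2).

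The difficulty is not conceptual but organizational: one must run through the roughly sixty deformation families not excluded by the hypothesis, in each case pin down the auxiliary data $(Y,L,C)$ and confirm the base point freeness of $\vert f^*L-E\vert$, the effectivity of $-K_Y-L$, its base point freeness when $\vert-K_X\vert$ is, and the complete-intersection dichotomy. All of the geometric content is already present in \cite[\S\,7]{MoriMukai1986}; the work here is to organize it into the uniform statement above and to isolate the two features — the $\gimel(X)=3.5$ exception in (1) and the list in (2) — that will be used in the proof of Theorem \ref{fibration iff Seshadri}.
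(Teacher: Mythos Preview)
Your proposal is correct and follows essentially the same approach as the paper: both treat the theorem as a reorganization of the case analysis in \cite[\S\,7]{MoriMukai1986}, and both obtain the splitting by setting $D_2=-K_X-D_1$, which via the blow-up formula is your $f^*(-K_Y-L)$. The paper is terser in that it invokes \cite[Propositions 2.12--2.14]{MoriMukai1986} as a ready-made structural trichotomy---these propositions already package the freeness of $\vert D_1\vert$ and $\vert D_2\vert$ and single out the pencil case (Proposition~2.14) corresponding to $\gimel(X)=3.5$---whereas you reconstruct this by hand from the blow-up formula and the tables. One small overreach: you assert that the families on the exceptional list ``admit no such presentation''; the theorem does not claim this, and several of those families (e.g.\ $2.6$, $2.8$, $2.18$) are not products or scrolls, so that parenthetical should be dropped.
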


\begin{proof}
	By definition, if $X$ lies in one of the situations of Proposition 2.12, Proposition 2.13 and Proposition 2.14 in \cite{MoriMukai1986}, then $X$ satisfies $(\clubsuit)$. Moreover, set $D_1=f^*L-E$ and $D_2=-K_X-D_1$. If $\vert-K_X\vert$ is base point free, then $\vert D_2\vert$ is composed with a pencil of surfaces if and only if $X$ satisfies the assumptions of \cite[Proposition 2.14]{MoriMukai1986}. If $X$ is a nonsingular Fano threefold with $\rho(X)=2$, the theorem follows form \cite[(7.1)-(7.3), (7.8)]{MoriMukai1986}. The rest can be checked similarly.
\end{proof}

First we consider the nonsingular Fano threefolds with $\rho=2$. Recall that a nonsingular Fano threefold $X$ is said to be \emph{imprimitive} if it is isomorphic to the blow-up of another nonsingular Fano threefold with center along a smooth irreducible curve. Otherwise $X$ is said to be \emph{primitive}.

\begin{prop}\label{Picard-Number-Two-Cases}
	Let $X$ be a nonsingular Fano threefold with $\rho(X)=2$.
	\begin{enumerate}
		\item There exists a del Pezzo fibration $X\rightarrow \bbP^1$ of degree $2$ if and only if $\gimel(X)\in\{2.2,2.3\}$.
		
		\item There exists a del Pezzo fibration $X\rightarrow \bbP^1$ of degree $3$ if and only if $\gimel(X)\in\{2.4,2.5\}$. 
	\end{enumerate}
\end{prop}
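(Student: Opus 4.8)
The plan is to run the Seshadri-constant dictionary of Theorem~\ref{fibration iff Seshadri} and Theorem~\ref{Morphisms-Splittings} together with Remark~\ref{Special-Spliting-Check}(2): to decide whether $X$ carries a del Pezzo fibration of degree $2$ (resp.\ $3$) it suffices to examine the two morphisms $g_1,g_2\colon X\to Y_i$ attached to \emph{one} free splitting $-K_X=D_1+D_2$ and to check whether one of them is a del Pezzo fibration over $\bbP^1$ of degree $2$ (resp.\ $3$); for the families satisfying $(\clubsuit)$ the natural choice is the distinguished splitting $D_1=f^{*}L-E$, $D_2=-K_X-D_1$ produced by Theorem~\ref{Intersetions-Properties}. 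I would first set aside $\gimel(X)=2.1$: here $\vert-K_X\vert$ is not base point free (Theorem~\ref{Non-Basepoint-Free}), so by Lemma~\ref{Fibration-Different} there is no del Pezzo fibration of degree $2$ or $3$; for every other family $\vert-K_X\vert$ is base point free and a free splitting exists (Theorem~\ref{existence of free splitting}).

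Next I would treat the families satisfying $(\clubsuit)$, i.e.\ all $\rho(X)=2$ families except $\gimel(X)\in\{2.2,2.6,2.8,2.18,2.24,2.32,2.34\text{--}2.36\}$. By Theorem~\ref{Intersetions-Properties}(1) the linear system $\vert D_2\vert$ is composed with a pencil of surfaces only for $\gimel(X)=3.5$, so for $\rho(X)=2$ the morphism $g_2$ always has image of dimension $\geq 2$ and is never a del Pezzo fibration over $\bbP^1$. By Lemma~\ref{Criterion-Complete-Intersection}, $\vert D_1\vert=\vert f^{*}L-E\vert$ is composed with a pencil of surfaces precisely when $C$ is a complete intersection of members of $\vert L\vert$, which by Theorem~\ref{Intersetions-Properties}(2) occurs exactly for
\[
\gimel(X)\in\{2.1,2.3,2.4,2.5,2.7,2.10,2.14,2.15,2.23,2.25,2.28,2.29,2.30,2.33\}.
\]
For a $(\clubsuit)$-family outside this list, neither $g_1$ nor $g_2$ fibres over a curve, so Theorem~\ref{Morphisms-Splittings} forces $\varepsilon(X,-K_X;1)\geq 2$ and $X$ carries no del Pezzo fibration of degree $2$ or $3$. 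For $\gimel(X)$ in the list and $\neq 2.1$, the pencil $\vert D_1\vert$ does define a del Pezzo fibration $g_1\colon X\to\bbP^1$, and I would compute its degree $(-K_X)^{2}\cdot S$ ($S$ a general fibre) from the explicit blow-up data of \cite{MoriMukai1986}, expecting degree $2$ for $\gimel(X)=2.3$, degree $3$ for $\gimel(X)\in\{2.4,2.5\}$, and degree $\geq 4$ for the remaining ten families (for $2.28,2.30,2.33$ this is immediate from $\ell_X=3$, cf.\ Theorem~\ref{fibration iff Seshadri}(4)).

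Finally I would dispose of the nine families failing $(\clubsuit)$ by inspecting their Mori--Mukai descriptions directly. The family $\gimel(X)=2.2$ is a double cover of $\bbP^1\times\bbP^2$ branched along a divisor of bidegree $(2,4)$, so composition with the first projection is a del Pezzo fibration of degree $2$ over $\bbP^1$; by Theorem~\ref{fibration iff Seshadri} it then admits no fibration of degree $3$. Each of the remaining eight families $\gimel(X)\in\{2.6,2.8,2.18,2.24,2.32,2.34\text{--}2.36\}$ has an explicit Mori--Mukai model --- a projectivised rank-two bundle over $\bbP^2$ (such as $\bbP(T_{\bbP^2})$ or $\bbP^1\times\bbP^2$), a divisor of bidegree $(2,2)$ or $(1,2)$ on $\bbP^2\times\bbP^2$, a double cover of $\bbP^1\times\bbP^2$, or a double cover of one of the former --- all of which carry only conic-bundle, $\bbP^1$-bundle, or large-degree del Pezzo fibration structures, so that $\varepsilon(X,-K_X;1)=2$ and no del Pezzo fibration of degree $2$ or $3$ exists.

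The main obstacle is the bookkeeping in the last two paragraphs: the ten degree computations for the ``large-degree'' entries of the complete-intersection list, and the direct verification for the nine families failing $(\clubsuit)$, above all confirming that $\gimel(X)=2.2$ is the unique such family with a small-degree del Pezzo fibration and that this degree is $2$. None of this requires ideas beyond Theorem~\ref{Intersetions-Properties} and the Mori--Mukai classification, but it is the part that demands genuine case analysis rather than formal deduction.
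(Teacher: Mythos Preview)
Your approach is logically sound, but the paper takes a markedly more direct route that sidesteps almost all of the case analysis you flag as the main obstacle. The key observation the paper exploits, and which you do not use, is that when $\rho(X)=2$ any del Pezzo fibration $X\to\bbP^1$ is automatically an \emph{extremal} contraction. The ``only if'' direction is then handled by the primitive/imprimitive dichotomy rather than by the Seshadri--splitting dictionary. If $X$ is primitive, \cite[Theorem 1.7]{MoriMukai1983} lists the extremal contractions outright and singles out $\gimel(X)=2.2$ as the unique case with a degree-$2$ del Pezzo fibration. If $X$ is imprimitive, then $X$ is the blow-up of a Fano threefold $Y$ with $\rho(Y)=1$ and index $r\geq 2$ along a smooth curve $C$; by \cite[Corollary 7.1.2]{Shafarevich1999} the second extremal contraction is induced by $\vert(r-1)\pi^{*}H-E\vert$, and when this is a pencil the fibre degree is $(r-1)H^{3}$. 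One then simply solves the numerical conditions $(r-1)H^{3}=2$ and $(r-1)H^{3}=3$ against the short list of $(r,H^{3})$ available for Fano threefolds of Picard rank one, obtaining $\gimel(X)=2.3$ and $\gimel(X)\in\{2.4,2.5\}$ respectively.

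Your route, by contrast, treats $\rho(X)=2$ on the same footing as higher Picard rank, funnelling everything through Theorems \ref{Morphisms-Splittings}, \ref{fibration iff Seshadri}, \ref{Intersetions-Properties} and Remark \ref{Special-Spliting-Check}(2). This is consistent with how the paper later handles the general $(\clubsuit)$ case in Proposition \ref{Intersections-Cases}, and it is not wrong; but it forfeits the shortcut that Picard rank two affords. Instead of solving a single Diophantine condition you are left with roughly ten fibre-degree computations for the complete-intersection list and eight further direct inspections for the non-$(\clubsuit)$ families. (Incidentally, your non-$(\clubsuit)$ list for $\rho=2$ coincides with the primitive families, so your two-step split is the same dichotomy as the paper's, just approached with heavier machinery.) The paper's argument is therefore substantially shorter here, while yours has the merit of uniformity with the $\rho\geq 3$ analysis that follows.
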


\begin{proof}
	If $\gimel(X)\in\{2.3,2.4,2.5\}$, $X$ is the blow-up of a nonsingular Fano threefold $Y$ with index $r$ along an irreducible smooth curve $C$. Let $H$ be the fundamental divisor of $X$. Then $C$ is a complete intersection of members of $\left\vert (r-1)H\right\vert$. Let $E$ be the exceptional divisor of the blow-up $f\colon X\rightarrow Y$. Then the linear system $\left\vert(r-1)f^*H-E\right\vert$ defines a del Pezzo fibration of degree $2$ (resp. $2$, $3$) if $\gimel(X)=2.3$ (resp. $\gimel(X)=2.4, 2.5$). 
	
	If $\gimel(X)=2.2$, $X$ is a double cover of $\bbP^1\times\bbP^2$ ramified along a divisor of bidegree $(2,4)$. Let $X\rightarrow \bbP^1\times\bbP^2\rightarrow \bbP^1$ be the composite. Then a straightforward computation shows that the general fiber is a del Pezzo surface of degree $2$.
	
	Now we assume that $X$ admits a del Pezzo fibration $f\colon X\rightarrow \bbP^1$ of degree $d\in\{2, 3\}$. As $\rho(X)=2$, $f$ is an extremal contraction. If $X$ is primitive, by the classification given in \cite[Theorem 1.7]{MoriMukai1983}, we have $\gimel(X)=2.2$. 
	
	If $X$ is imprimitive, then $X$ is the blow-up of another nonsingular Fano threefold $Y$ along an irreducible smooth curve $C$. As $\rho(X)=2$, $Y$ is a nonsingular Fano threefold with index $r$ and $\rho(X)=1$. Denote by $\pi\colon X\rightarrow Y$ the blow-up, by $E$ the exceptional divisor of $\pi$ and by $H$ the ample generator of $\pic(Y)$. Note that we have $r\geq 2$ by \cite[Proposition 7.1.5]{Shafarevich1999}. Thanks to \cite[Corollary 7.1.2]{Shafarevich1999}, $C$ should be a scheme-theoretical intersection of divisors from $\left\vert(r-1)H\right\vert$. In particular, the contraction $f$ is induced by $\left\vert(r-1)\pi^*H-E\right\vert$. The linear system $\left\vert(r-1)\pi^*H-E\right\vert$ is composed with a pencil of surfaces if and only if $C$ is a complete intersection of members of $\left\vert(r-1)H\right\vert$ and if so the general fiber of $f$ is a del Pezzo surface of degree $d=(r-1)H^3$. On the other hand, $(r-1)H^3=2$ if and only if $(r,H^3)\in\{(3,1),(2,2)\}$. Nevertheless, if $r=3$, then $Y$ is a quadric threefold and we have $H^3=2$. Therefore the case $(r,H^3)=(3,1)$ does not happen. Hence, if $f$ is a del Pezzo fibration of degree $2$, then $(r,H^3)=(2,2)$, i.e. $\gimel(X)=2.3$. The same argument can be applied to the case $(r-1)H^3=3$ to yield $\gimel(X)\in \{2.4, 2.5\}$.
\end{proof}

Next we consider the nonsingular Fano threefolds satisfying $(\clubsuit)$.

\begin{prop}\label{Intersections-Cases}
	Let $X$ be a nonsingular Fano threefold satisfying ($\clubsuit$). 
	\begin{enumerate}
		\item There exists a del Pezzo fibration $X\rightarrow \bbP^1$ of degree $2$ if and only if $\gimel(X)=2.3$.
		
		\item There exists a del Pezzo fibration $X\rightarrow \bbP^1$ of degree $3$ if and only if $\gimel(X)\in \{2.4, 2.5\}$. 
	\end{enumerate}
\end{prop}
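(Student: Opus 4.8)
The plan is to split the verification according to the Picard number of $X$, using Proposition~\ref{Picard-Number-Two-Cases} when $\rho(X)=2$ and, when $\rho(X)\geq3$, reducing via Remark~\ref{Special-Spliting-Check} to the two morphisms attached to the canonical splitting $-K_X=D_1+D_2$ of Theorem~\ref{Intersetions-Properties}. If $\rho(X)=2$, then Proposition~\ref{Picard-Number-Two-Cases} applies directly: $X$ carries a del Pezzo fibration of degree $2$ if and only if $\gimel(X)\in\{2.2,2.3\}$, and one of degree $3$ if and only if $\gimel(X)\in\{2.4,2.5\}$; since $2.2$ is excluded from ($\clubsuit$), this gives exactly the asserted statement when $\rho(X)=2$. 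So assume $\rho(X)\geq3$. By Theorem~\ref{Non-Basepoint-Free} the anticanonical system of $X$ is then base point free (the two exceptions $2.1$ and $10.1$ having $\rho=2$ and being excluded from ($\clubsuit$), respectively), so $-K_X=D_1+D_2$ with $D_1=f^*L-E$ is a \emph{free} splitting, and by Remark~\ref{Special-Spliting-Check} it suffices to examine the morphisms $g_1$ and $g_2$ defined by $\vert D_1\vert$ and $\vert D_2\vert$.

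The next step is to decide when $g_i$ has one-dimensional image. For $g_1$ this means $\vert f^*L-E\vert$ is composed with a pencil, which by Lemma~\ref{Criterion-Complete-Intersection} together with Theorem~\ref{Intersetions-Properties}(2) holds exactly when $\gimel(X)$ lies in the list appearing in part (2) of Theorem~\ref{Intersetions-Properties}; for $g_2$ it means $\vert D_2\vert$ is composed with a pencil of surfaces, which by Theorem~\ref{Intersetions-Properties}(1) holds exactly when $\gimel(X)=3.5$. Hence, for $\rho(X)\geq3$, if $\gimel(X)$ is neither $3.5$ nor in that list, then neither $g_1$ nor $g_2$ maps onto $\bbP^1$, so $X$ admits no del Pezzo fibration of degree $2$ or $3$ at all and there is nothing to prove. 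The only remaining cases are $\gimel(X)\in\{3.4,3.5,3.7,3.11,3.24,3.26,4.4,4.9,5.1\}$.

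For these nine families I would show that the del Pezzo fibration produced by $g_1$ (or by $g_2$ when $\gimel(X)=3.5$) has degree at least $4$, which then finishes the proof. When $C=L_1\cap L_2$ is a complete intersection of members of $\vert L\vert$, the general fibre of $g_1$ is the strict transform of a general member $S\in\vert L\vert$ containing $C$; since $C$ is a Cartier divisor in the smooth surface $S$, this strict transform is carried isomorphically onto $S$ by $f$, so by adjunction its anticanonical degree equals the intersection number $(-(K_Y+L))^2\cdot L$ computed on $Y$, which one evaluates from the explicit blow-up data $(Y,L,C)$ recorded in the Mori--Mukai tables; for $\gimel(X)=3.5$ one argues instead with the general fibre of $g_2$, whose anticanonical self-intersection equals $(-K_X)^2\cdot D_2$. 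In each of the nine cases the outcome is a degree $\geq4$, so none of these threefolds carries a del Pezzo fibration of degree $2$ or $3$. Combining this with the $\rho(X)=2$ discussion yields the proposition.

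The main obstacle is this last step: one has to run through the nine families with $\rho(X)\geq3$ individually, extract the correct explicit description of $X$ from the Mori--Mukai classification, check that the free splitting of Theorem~\ref{Intersetions-Properties} does correspond to the extremal contraction in question and that its general fibre is a smooth del Pezzo surface, and then carry out the intersection computation showing the degree is at least $4$. Everything else is bookkeeping with the lists of Theorem~\ref{Intersetions-Properties}, Proposition~\ref{Picard-Number-Two-Cases} and Theorem~\ref{Non-Basepoint-Free}.
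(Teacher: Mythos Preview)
Your proposal is correct and follows essentially the same route as the paper: reduce $\rho(X)=2$ to Proposition~\ref{Picard-Number-Two-Cases}, and for $\rho(X)\geq 3$ use Remark~\ref{Special-Spliting-Check} to test only the free splitting $-K_X=D_1+D_2$ of Theorem~\ref{Intersetions-Properties}, identify via Lemma~\ref{Criterion-Complete-Intersection} and Theorem~\ref{Intersetions-Properties}(1)--(2) exactly when $\vert D_i\vert$ is a pencil of surfaces, and then compute the fibre degree in the finitely many surviving cases. The paper carries out those nine degree computations in the Appendix (writing the degree as $D_1\cdot D_2^2$, which equals your $(-K_Y-L)^2\cdot L$ by the projection formula), and handles $\gimel(X)=3.5$ by directly identifying the general fibre of $g_2$ as $\bbP^2$ blown up in five points rather than computing $(-K_X)^2\cdot D_2$; otherwise the arguments coincide.
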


\begin{proof}
	Firstly we consider the case $\gimel(X)=3.5$. By \cite{MoriMukai1981/82}, $X$ is the blow-up of $\bbP^1\times\bbP^2$ along a curve $C$ of bidegree $(5,2)$ such that the composition $C\hookrightarrow \bbP^1\times\bbP^2\rightarrow\bbP^2$ is an embedding. Let $E$ be exceptional divisor of the blow-up $f\colon X\rightarrow Y=\bbP^1\times\bbP^2$, and let $g\colon\bbP^1\times\bbP^2$ be the first projection. Set $L=\cO_Y(-K_Y)\otimes g^*\cO_{\bbP^1}(1)$. Then $C$ is an intersection (not complete intersection) of members of $\vert L\vert$. Moreover, set $D_1=f^*L-E$ and $D_2=f^*g^*\cO_{\bbP^1}(1)$. By \cite[Proposition 2.14]{MoriMukai1986} or \cite[(7.17)]{MoriMukai1986}, $-K_X=D_1+D_2$ is a free splitting. Thus, $\vert D_1\vert$ is not composed with a pencil of surfaces and $\vert D_2\vert$ is composed with a pencil of surfaces. The induced morphism by $\vert D_2\vert$ is exactly $g\circ f\colon X\rightarrow \bbP^1$. Let $S$ be a general fiber of $g\circ f$. From the construction, $S$ is the blow-up of $\bbP^2$ at $5$ points. Therefore, $S$ is a del Pezzo surface of degree $4$.
	
	Secondly, we consider the case in which $\vert D_2\vert$ is not composed with a pencil of surfaces. By Lemma \ref{Criterion-Complete-Intersection}, $\vert D_1\vert$ is composed with a pencil of surfaces if and only if $C$ is a complete intersection. In this case, the general fiber $S$ of the morphism $\Phi_{\vert D_1\vert}\colon X\rightarrow\bbP^1$ is an irreducible del Pezzo surface of degree $D_1\cdot D_2^2$. As the case $\rho(X)=2$ is already considered in Proposition \ref{Picard-Number-Two-Cases}, it remains to consider the case $\rho(X)\geq 3$. Then the quantity $D_1\cdot D_2^2$ is computed in the Appendix and we see that $S$ is always a del Pezzo surface of degree $\geq 4$.
\end{proof}

Finally we consider the nonsingular Fano threefolds of product type.

\begin{lemma}\label{Products-Cases}
	Let $X$ be a nonsingular Fano threefold such that $\rho(X)\geq 6$ or $\gimel(X)\in \{3.27,3.28,4.10,5.3\}$. Then $X$ admits a del Pezzo fibration of degree $d\leq 3$ if and only if $X\simeq\bbP^1\times S$, where $S$ is a del Pezzo surface of degree $d$.
\end{lemma}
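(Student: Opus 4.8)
The plan is to handle the two implications separately. The ``if'' direction is immediate: if $X\cong\bbP^1\times S$ with $S$ a del Pezzo surface of degree $d\le 3$, then the first projection $X\to\bbP^1$ is a del Pezzo fibration of degree $d$. For the converse, I would first reduce to a product via the Mori--Mukai classification \cite{MoriMukai1981/82,MoriMukai2003}: every nonsingular Fano threefold with $\rho(X)\ge 6$ is isomorphic to $\bbP^1\times S_e$ with $S_e$ a del Pezzo surface of degree $e=11-\rho(X)$, while the four families $\gimel(X)\in\{3.27,3.28,4.10,5.3\}$ are respectively $\bbP^1\times S_8$, $\bbP^1\times S_8$, $\bbP^1\times S_7$ and $\bbP^1\times S_6$. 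Hence in all cases $X=\bbP^1\times S_e$ for a del Pezzo surface $S_e$ of some degree $e\ge 1$, and it is enough to show that a del Pezzo fibration $g\colon X\to\bbP^1$ of degree $d\le 3$ forces $e=d$; then $S:=S_e$ does the job.

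For that last step I would argue numerically. Let $p\colon X\to\bbP^1$ and $q\colon X\to S_e$ denote the two projections and let $F$ be a general fiber of $g$. Being a fiber of a morphism to a smooth curve, $F$ has trivial normal bundle in $X$, so adjunction gives $-K_F=-K_X|_F$ and hence $d=(-K_F)^2=(-K_X)^2\cdot[F]$. In $\pic(X)=\bbZ\,p^*\cO_{\bbP^1}(1)\oplus q^*\pic(S_e)$ write $[F]=g^*\cO_{\bbP^1}(1)=\alpha\,p^*\cO_{\bbP^1}(1)+q^*M$. Since $[F]$ is nef, intersecting it with curves $\{t\}\times C'$ (for $C'\subset S_e$ a curve) shows $M$ is nef on $S_e$, and intersecting it with a fiber of $q$ gives $\alpha\ge 0$; intersecting the relation $[F]^2=0$ with $p^*\cO_{\bbP^1}(1)$ gives $M^2=0$. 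Expanding the triple intersections on the product then yields
\[
d=(-K_X)^2\cdot[F]=4\bigl(-K_{S_e}\cdot M\bigr)+\alpha\,e.
\]
Because $d\le 3$, $\alpha\ge 0$ and $e\ge 1$, this forces $-K_{S_e}\cdot M=0$; as $-K_{S_e}$ is ample and $M$ is nef, the Hodge index theorem yields $M=0$, so $d=\alpha e$ and in particular $\alpha\ge 1$.

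To conclude, I would show $\alpha=1$. From $[F]=\alpha\,p^*\cO_{\bbP^1}(1)$ and $p_*\cO_X=\cO_{\bbP^1}$ one gets $H^0(X,\cO_X(F))\cong H^0(\bbP^1,\cO_{\bbP^1}(\alpha))$, so the base-point-free pencil defining $g$ is pulled back from $\bbP^1$; equivalently $g=\phi\circ p$ for a morphism $\phi\colon\bbP^1\to\bbP^1$ of degree $\alpha$. Since $g$ has connected fibers, so does $\phi$, whence $\alpha=1$. Therefore $d=e$, the fibration $g$ agrees up to an automorphism of $\bbP^1$ with the projection $p$, and $X\cong\bbP^1\times S_e$ with $S_e$ of degree $d$, as required. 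I expect the second paragraph to be the delicate point: one must exclude a priori both a ``tilted'' splitting class $M\ne 0$ and a fibration whose general fiber dominates $S_e$ under $q$ (the case $\alpha=0$, for which the general fiber would be a $\bbP^1$-bundle over $\bbP^1$), and it is precisely the hypothesis $d\le 3$ that rules these out, since either configuration would force the general fiber to have anticanonical degree at least $8$.
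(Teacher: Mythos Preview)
Your argument is correct, and it takes a genuinely different route from the paper's.

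The paper's proof leans entirely on the Seshadri-constant machinery built up in Section~3: after writing $X=\bbP^1\times S$ via the Mori--Mukai classification, it takes the obvious splitting $-K_X=p_1^*(-K_{\bbP^1})+p_2^*(-K_S)$ and then invokes Corollary~\ref{1-Non-Basepoint-Freeness} for $d=1$ and Theorem~\ref{fibration iff Seshadri} for $d\in\{2,3\}$ to force $\deg S=d$. In effect the paper argues that $\varepsilon(X,-K_X;1)$ is simultaneously determined by $d$ and by $\deg S$, so the two must agree. This is short on the page but presupposes the whole structure theorem.

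Your approach is self-contained intersection theory on the product and avoids Seshadri constants altogether. The key numerical identity $d=(-K_X)^2\cdot[F]=\alpha e+4(-K_{S_e}\cdot M)$ is correct (note $(p^*\cO(1))^2=0$ and $(q^*(-K_{S_e}))^2$ is $e$ copies of a $q$-fibre), and the bound $d\le 3$ kills the nonnegative multiple-of-$4$ term, giving $-K_{S_e}\cdot M=0$ with $M$ nef and $-K_{S_e}$ ample, hence $M\equiv 0$ by Hodge index; since $S_e$ is rational this upgrades to $M=0$ in $\pic(S_e)$. Your endgame $\alpha=1$ via $H^0(X,\cO_X(F))\cong H^0(\bbP^1,\cO(\alpha))$ is also fine: the two sections defining $g$ pull back from $\bbP^1$ and have no common zero (else $g$ would have base points), so $g=\phi\circ p$ with $\deg\phi=\alpha$, and connectedness of the general fibre of $g$ forces $\alpha=1$. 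One cosmetic point: you might state explicitly that $M\equiv 0$ implies $M=0$ because $\Pic^0(S_e)=0$, so that the equality $[F]=\alpha\,p^*\cO_{\bbP^1}(1)$ holds in $\pic(X)$ rather than merely numerically.

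What each approach buys: the paper's proof is two lines once Theorem~\ref{fibration iff Seshadri} is in hand, and it illustrates how that theorem organises the classification. Your proof is independent of all of Section~3, so it could in principle be placed earlier and used as an input to the Seshadri computation rather than as a consequence of it; it also makes transparent \emph{why} $d\le 3$ is the right threshold (the coefficient $4$ in front of $-K_{S_e}\cdot M$).
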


\begin{proof}
	One implication is clear. Now we assume that $X$ admits a del Pezzo fibration of degree $d\leq 3$. By the assumption, $X$ is isomorphic to $\bbP^1\times S$, where $S$ is a del Pezzo surface of degree $d$. Set $D_1=p_1^*(-K_{\bbP^1})$ and $D_2=p_2^*(-K_S)$. If $d=1$, then $\vert-K_X\vert$ is not base point free and consequently $S$ is a del Pezzo surface of degree $1$ by Corollary \ref{1-Non-Basepoint-Freeness}. If $d\geq 2$, then $-K_X=D_1+D_2$ is a free splitting, and we can conclude by Theorem \ref{fibration iff Seshadri}.
\end{proof}

\subsection{Remaining cases} 

As a consequence of Theorem \ref{Intersetions-Properties}, Proposition \ref{Picard-Number-Two-Cases}, Proposition \ref{Intersections-Cases} and Lemma \ref{Products-Cases}, it remains to consider the nonsingular Fano threefolds $X$ such that 
\[\gimel(X)\in\{3.1-3.3,3.8,3.17,3.19,3.31, 4.1\}.\]

\begin{longtable}{|M{0.5cm}|M{11cm}|}
	\hline
	$\gimel$ &  brief description \\\hline
	
	$3.1$  &  a double cover of $\bbP^1\times\bbP^1\times\bbP^1$ ramified along a divisor of tridegree $(2,2,2)$ \\\hline
	
	$3.2$  &  a divisor from $\vert \xi^{\otimes 2}\otimes\cO(2,3)\vert$ on  the $\bbP^2$-bundle $\bbP(\cE)$ over $\bbP^1\times\bbP^1$, where $\cE=\cO\oplus \cO(-1,-1)^{\oplus 2}$ and $\xi$ is the tautological bundle $\cO_{\bbP(\cE)}(1)$\\\hline
	
	$3.3$  &   a divisor on $\bbP^1\times\bbP^1\times\bbP^1$ of tridegree $(1,1,2)$\\\hline
	
	$3.8$  & a divisor from $\vert p_1^*g^*\cO(1)\otimes p_2^*\cO(2)\vert$ on $\bbF_1\times \bbP^2$, where $p_i$ is the $i$-th projection and $g\colon \bbF_1\rightarrow \bbP^2$ is the blow-up \\\hline
	
	$3.17$  & a divisor on $\bbP^1\times\bbP^1\times\bbP^2$ of tridegree $(1,1,1)$  \\\hline
	
	$3.19$  & the blow-up of $Q^3\subset\bbP^4$ at two non-collinear points    \\\hline
	
	$3.31$  &  $\bbP(\cO\oplus\cO(1,1))$ over $\bbP^1\times\bbP^1$  \\\hline
	
	$4.1$  &  divisor on $\bbP^1\times\bbP^1\times\bbP^1\times\bbP^1$ of multidegree $(1,1,1,1)$ \\\hline
\end{longtable}

\begin{prop}
	If $\gimel(X)\in \{3.1,3.3,3.17,4.1\}$, then $\varepsilon(X,-K_X;1)=2$.
\end{prop}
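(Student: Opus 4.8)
The plan is to apply Theorem \ref{fibration iff Seshadri} together with the reduction in Remark \ref{Special-Spliting-Check}(2). None of the four families belongs to $\{2.1,10.1\}$, so $\vert-K_X\vert$ is base point free by Theorem \ref{Non-Basepoint-Free}; by Corollary \ref{1-Non-Basepoint-Freeness} this means $\varepsilon(X,-K_X;1)\not=1$ and $X$ has no del Pezzo fibration over $\bbP^1$ of degree $1$. Since $\rho(X)\geq 3$, the threefold $X$ is not the blow-up of $\bbP^3$ along a smooth plane curve of degree at most $3$ either. Hence, by Theorem \ref{fibration iff Seshadri}, it is enough to show that $X$ admits no del Pezzo fibration over $\bbP^1$ of degree $2$ or $3$; and since $\vert-K_X\vert$ is base point free, Remark \ref{Special-Spliting-Check}(2) (built on Theorem \ref{Morphisms-Splittings}) lets us verify this on the two contractions attached to one chosen free splitting $-K_X=D_1+D_2$.

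For each of the four families the description in the table realises $X$ either as a divisor in a product of projective spaces or as a double cover of such a product, with $-K_X$ the restriction (resp.\ pull-back) of a line bundle $\cO(a_1,\dots,a_k)$. Writing this multidegree as a sum of two effective multidegrees gives a splitting $-K_X=D_1+D_2$ which is automatically free, since the restriction of a globally generated line bundle is again globally generated. The two induced morphisms are then read off from the relevant projections (composed with Segre or Veronese maps), and their general fibres are described via adjunction and generic smoothness. In the case $3.1$, with $\pi\colon X\to(\bbP^1)^3$ the double cover branched along $B\in\vert\cO(2,2,2)\vert$ and $-K_X=\pi^*\cO(1,1,1)$, I would take $D_1=\pi^*\cO(1,1,0)$, $D_2=\pi^*\cO(0,0,1)$: then $\vert D_1\vert$ factors through the conic bundle $X\to\bbP^1\times\bbP^1$, while $\vert D_2\vert$ is the projection $X\to\bbP^1$ whose general fibre is the double cover of $\bbP^1\times\bbP^1$ branched over a smooth $(2,2)$-curve, a del Pezzo surface of degree $4$. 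In the case $4.1$, with $X\in\vert\cO(1,1,1,1)\vert$ on $(\bbP^1)^4$ and $-K_X=\cO(1,1,1,1)\vert_X$, I would take $D_1=\cO(1,1,1,0)\vert_X$, $D_2=\cO(0,0,0,1)\vert_X$: then $\vert D_1\vert$ induces the birational morphism given by the first three projections, and $\vert D_2\vert$ induces the projection $X\to\bbP^1$ whose general fibre is a $(1,1,1)$-divisor on $(\bbP^1)^3$, a del Pezzo surface of degree $6$. The cases $3.3$ and $3.17$, realised as divisors of tridegree $(1,1,2)$, resp.\ $(1,1,1)$, on $\bbP^1\times\bbP^1\times\bbP^2$, are handled the same way: the splitting $\cO(1,0,0)\vert_X+\cO(0,1,1)\vert_X$, resp.\ $\cO(1,0,0)\vert_X+\cO(0,1,2)\vert_X$, produces a projection $X\to\bbP^1$ whose general fibre is a $(1,2)$-divisor, resp.\ a $(1,1)$-divisor, on $\bbP^1\times\bbP^2$, a del Pezzo surface of degree $5$, resp.\ $8$, together with a morphism which is birational onto $\bbP^1\times\bbP^2$. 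In all four cases neither of the two contractions is a del Pezzo fibration over $\bbP^1$ of degree $2$ or $3$.

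It then follows that $X$ admits no del Pezzo fibration over $\bbP^1$ of degree at most $3$, so $\varepsilon(X,-K_X;1)=2$ by Theorem \ref{fibration iff Seshadri}(5). The actual work is the case-by-case analysis in the second step: correctly extracting the ambient variety and multidegree from the Mori--Mukai tables, choosing for each family a splitting whose two summands are visibly globally generated (so that freeness needs no separate check), computing the Stein factorisations of $\vert D_1\vert$ and $\vert D_2\vert$ from the product structure, and evaluating $K_S^2$ on the general fibre $S$ by adjunction. The only point one must be careful about is the right to test a single free splitting rather than every free splitting---this is exactly what Remark \ref{Special-Spliting-Check}(2) provides, and it is the reason for recording the base point freeness of $\vert-K_X\vert$ at the start.
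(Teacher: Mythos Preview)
Your proof is correct, but it takes a different route from the paper's. The paper exploits the fact that in all four families $-K_X$ splits as a sum of \emph{three} free divisors $D_1+D_2+D_3$ (pulled back from the factors of the ambient product), and argues directly on curves: for any irreducible $C$ through a very general point $x$, either two of the $D_i$ meet $C$ positively, giving $-K_X\cdot C\geq 2\,\mult_x C$, or two of them vanish on $C$, in which case $C$ lies in a fibre of the map to the corresponding product of two base factors; since that base has dimension $\geq 2$, this is a fibration in curves and Lemma \ref{Contraction-Curve} gives $-K_X\cdot C=2\,\mult_x C$. This yields $\varepsilon\geq 2$ uniformly, and equality then follows from Theorem \ref{fibration iff Seshadri}.

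Your approach instead stays within the two-term framework of Remark \ref{Special-Spliting-Check}(2): for each family you exhibit one free splitting $-K_X=D_1+D_2$, identify the Stein factorisations of $\Phi_{|D_1|}$ and $\Phi_{|D_2|}$, and check that the only del Pezzo fibration arising has degree $4$, $5$, $6$ or $8$. This is entirely valid and is in fact the method the paper uses for the remaining families in \S4, but it requires case-by-case degree computations and fibre descriptions. The paper's three-summand argument is slicker here precisely because the triple product structure lets one bypass all of that: no fibre degree ever needs to be computed.
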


\begin{proof}
	In these four cases, it is easily seen that we can write $-K_X=D_1+D_2+D_3$ with all $\vert D_i\vert$ free linear system. Let $x\in X$ be a very general point and let $C$ be an irreducible curve passing through $x$. If there exist $D_i$ and $D_j$ with $i\not=j$ such that $D_i\cdot C>0$ and $D_j\cdot C>0$, then we have $-K_X\cdot C\geq 2\mult_x C$ as before. If there exist $D_i$ and $D_j$ with $i\not=j$ such that $D_i\cdot C=D_j\cdot C=0$, then $C$ is contained in a fiber of the map $\pi=(\pi_i,\pi_j)\colon X\rightarrow Y_i\times Y_j$, where $\pi_i$ and $\pi_j$ are the morphisms induced by $\vert D_i\vert$ and $\vert D_j\vert$, respectively. As $\dim(Y_i\times Y_j)\geq 2$ and $x$ is very general, it follows that $\pi$ is a fibration in curves. By Lemma \ref{Contraction-Curve}, we get $\varepsilon(X,-K_X;1)=2\mult_x C$. Thus we have $\varepsilon(X,-K_X;x)\geq 2$ and we see that the equality holds from Theorem \ref{fibration iff Seshadri}.
\end{proof}

\begin{prop}
	If $\gimel(X)=3.2$, denote by $f$ the composite $X\rightarrow \bbP^1\times \bbP^1\rightarrow \bbP^1$, where the last morphism is the projection to the first factor, then $f$ is a del Pezzo fibration of degree $3$.
\end{prop}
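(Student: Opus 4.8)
The plan is to realise the general fibre of $f$ explicitly as a divisor in a $\bbP^2$-bundle over $\bbP^1$ and to compute its anticanonical degree by intersection theory on the ambient fourfold $V:=\bbP(\cE)$, where $\cE=\cO\oplus\cO(-1,-1)^{\oplus 2}$ over $\bbP^1\times\bbP^1$. Write $\xi=c_1(\cO_V(1))$ and let $h_1,h_2$ be the pullbacks of the point classes of the two factors, so that $X\in\vert 2\xi+2h_1+3h_2\vert$ (the coefficient $3$ of $h_2$ is the one responsible for the degree of the general fibre of $f$, since the fibre of $f$ over a point of the first factor sees the second $\bbP^1$ as its base curve). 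The proof then has two parts: first, check that $f$ is a genuine del Pezzo fibration; second, compute the degree.

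For the first part, let $p\colon V\to\bbP^1\times\bbP^1$ be the projection. The restriction $p\vert_X\colon X\to\bbP^1\times\bbP^1$ has fibres equal to the intersections of $X$ with the $\bbP^2$-fibres of $V$; since $\cO_V(2\xi)$ restricts to $\cO_{\bbP^2}(2)$ while $h_1,h_2$ restrict trivially, these fibres are conics, hence connected and nonempty, so $p\vert_X$ is surjective with $(p\vert_X)_*\cO_X=\cO_{\bbP^1\times\bbP^1}$. Composing with the first projection $q$, which also has connected fibres, gives $f_*\cO_X=\cO_{\bbP^1}$, so $f$ is surjective with connected fibres. For $t\in\bbP^1$ general, the fibre $S:=f^{-1}(t)=X\cap p^{-1}(\{t\}\times\bbP^1)$ is a smooth surface by generic smoothness in characteristic zero ($X$ being smooth), and it is a del Pezzo surface: as $S$ is a fibre of a morphism to a curve, $\cO_X(S)\vert_S$ is trivial, so adjunction gives $-K_S=(-K_X)\vert_S$, which is ample since $X$ is Fano.

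It remains to show $(-K_S)^2=3$. By adjunction $-K_X=(-K_V-X)\vert_X$, and the relative Euler sequence of $V\to\bbP^1\times\bbP^1$ yields $-K_V=3\xi+4h_1+4h_2$, hence $-K_X=(\xi+2h_1+h_2)\vert_X$; together with $[S]=h_1\vert_X$ this gives, as a degree on $V$,
\[
(-K_S)^2=(\xi+2h_1+h_2)^2\cdot(2\xi+2h_1+3h_2)\cdot h_1 .
\]
Now use $h_1^2=h_2^2=0$, the relation $\xi^2 h_1 h_2=1$ (the square of the hyperplane class on a $\bbP^2$-fibre), and the Grothendieck relation coming from $c(\cE)=(1-h_1-h_2)^2$, namely $\xi^3=-2(h_1+h_2)\xi^2-2h_1h_2\xi$, which specialises to $\xi^3 h_1=\xi^3 h_2=-2$. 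The product then collapses to $2\xi^3 h_1+7\xi^2 h_1 h_2=2(-2)+7=3$, so $S$ is a del Pezzo surface of degree $3$ and $f$ is a del Pezzo fibration of degree $3$.

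I do not expect a serious obstacle here; the only points demanding care are the bookkeeping of the Euler and Grothendieck relations with consistent sign conventions (note that $\cO_V(1)$ is not nef, so $\xi^3<0$ on the sub-fibres over $\bbP^1$, which is exactly what the computation produces) and, if one wants to be scrupulous, the connectedness of the general fibre, for which the conic-fibration description of $p\vert_X$ suffices. As a consistency check one may recompute, using $\xi^4=6$ and $\xi^3 h_i=-2$, that $(-K_X)^3=2\xi^4+14\xi^3h_1+9\xi^3h_2+48\xi^2h_1h_2=12-28-18+48=14$, matching the value in the Mori--Mukai tables for $\gimel(X)=3.2$.
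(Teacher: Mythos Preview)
Your proof is correct and follows essentially the same route as the paper: both compute the anticanonical degree of the general fibre by intersection theory on $V=\bbP(\cE)$ using the relation $\xi^3=-2(h_1+h_2)\xi^2-2h_1h_2\xi$ (which the paper records in the equivalent form $(\xi+h_1+h_2)^2\cdot\xi=0$, citing \cite[(7.14.2)]{MoriMukai1986}). The paper frames the computation via the free splitting $-K_X=D_1+D_2$ with $D_1=h_1\vert_X$, $D_2=(\xi+h_1+h_2)\vert_X$ and evaluates $(K_X+D_1)^2\cdot D_1$, which coincides with your $(-K_X)^2\cdot h_1\vert_X$ since $h_1^2=0$; your extra care with connectedness of the fibres and the consistency check $(-K_X)^3=14$ are welcome additions the paper omits.
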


\begin{proof}
	Denote by $H_1$ and $H_2$ the line bundles $\pi^*\cO(1,0)$ and $\pi^*\cO(0,1)$ respectively, where $\pi\colon\bbP(\cE)\rightarrow\bbP^1\times\bbP^1$ is the projection. Then we have $-K_X=(\xi+2H_1+H_2)\vert_X$. Set $D_2=(\xi+H_1+H_2)\vert_X$ and $D_1=H_1\vert_X$. Then $-K_X=D_1+D_2$ is a free splitting and $\vert D_1\vert$ is composed with a rational pencil of surfaces. As $X\sim 2\xi+2H_1+3H_2$, we have
	\begin{align*}
	(K_X+D_1)^2\cdot D_1	& = ((-\xi-H_1-H_2)\vert_X)^2\cdot H_1\vert_X\\
	& = (\xi^2\cdot H_1+2\xi\cdot H_1\cdot H_2)\cdot(2\xi+2H_1+3H_2)\\
	& = 2\xi^3\cdot H_1+5\xi^2\cdot H_1\cdot H_2
	\end{align*}
	By the relation $(\xi+H_1+H_2)^2\cdot\xi=0$ (see \cite[(7.14.2)]{MoriMukai1986}), we obtain 
	\[\xi^3=-2\xi^2\cdot (H_1+H_2)-2\xi\cdot H_1\cdot H_2.\]
	This yields
	\[(K_X+D_1)^2\cdot D_1=3\xi^2\cdot H_1\cdot H_2=3.\]
	Since $\vert-K_X\vert$ is base point free, the general fiber of $f$ is a smooth irreducible del Pezzo surface of degree $3$.
\end{proof}

\begin{prop}
	If $\gimel(X)\in\{3.8,3.19,3.31\}$, then there exists a free splitting $-K_X=D_1+D_2$ such that $\vert D_1\vert$ and $\vert D_2\vert$ are both not composed with a pencil of surfaces. In particular, $X$ does not admit a del Pezzo fibration of degree $\leq 3$.
\end{prop}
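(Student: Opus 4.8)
Here is how I would prove the proposition.

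The plan is to handle the three families $\gimel(X)\in\{3.8,3.19,3.31\}$ separately, using the descriptions in the table above, and in each case to write down a concrete splitting $-K_X=D_1+D_2$ in which $\vert D_1\vert$ and $\vert D_2\vert$ are base point free and neither is composed with a pencil of surfaces. Once this is done the proposition is immediate: since $\gimel(X)\notin\{2.1,10.1\}$, the anticanonical system is base point free by Theorem \ref{Non-Basepoint-Free}, so Theorem \ref{Morphisms-Splittings} applies; were $\varepsilon(X,-K_X;1)<2$, then for our free splitting one of $\vert D_1\vert,\vert D_2\vert$ would induce a del Pezzo fibration over $\bbP^1$ and hence be composed with a pencil of surfaces, a contradiction; thus $\varepsilon(X,-K_X;1)\geq 2$, and Theorem \ref{fibration iff Seshadri} then forbids a del Pezzo fibration of degree $\leq 3$. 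Throughout I use the elementary remark that a base point free $\vert D\vert$ which is composed with a pencil of surfaces satisfies $D^2\equiv 0$ numerically (then $D$ is numerically proportional to a fibre of the induced fibration $X\to\bbP^1$, and two general fibres are disjoint); so for each $D_i$ it is enough either to verify that $D_i$ is big, or to realise $D_i$ as the pullback of $\cO_{\bbP^2}(1)$ under a surjective morphism $X\to\bbP^2$ whose general fibre is a conic, so that $D_i^2$ is numerically this conic class and $D_i^2\cdot(-K_X)>0$ by ampleness.

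For $\gimel(X)=3.31$, write $X=\bbP(\cE)$ with $\cE=\cO\oplus\cO(1,1)$ over $S=\bbP^1\times\bbP^1$; let $\pi$ be the projection, $\xi=c_1(\cO_{\bbP(\cE)}(1))$ normalised by $\pi_*\cO_{\bbP(\cE)}(1)=\cE$, and $H_1,H_2$ the pullbacks of the two rulings of $S$. Since $\cE$ is globally generated, $\xi$ is base point free; from $c_1(\cE)=\cO(1,1)$ and $c_2(\cE)=0$ one gets $\xi^2=\xi\cdot(H_1+H_2)$ in the Chow ring, hence $\xi^3=2$, $\xi^2\cdot H_i=1$, and $-K_X=2\xi+H_1+H_2$. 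Take $D_1=\xi+H_1$ and $D_2=\xi+H_2$; both are nef and base point free, and $D_1^3=D_2^3=5>0$, so each is big and therefore not composed with a pencil.

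For $\gimel(X)=3.8$ and $3.19$ the two summands play asymmetric roles: one is big, the other is the pullback of $\cO_{\bbP^2}(1)$ under a morphism onto $\bbP^2$ with conic general fibre. For $\gimel(X)=3.8$, view $X$ as a smooth member of $\vert p_1^*g^*\cO(1)\otimes p_2^*\cO(2)\vert$ on $W=\bbF_1\times\bbP^2$; set $L=g^*\cO_{\bbP^2}(1)$, let $e$ be the $(-1)$-curve of $\bbF_1$, $f=L-e$ the ruling, $M=\cO_{\bbP^2}(1)$, so that adjunction gives $-K_X=(p_1^*(2L-e)+p_2^*M)\vert_X=(p_1^*L+p_1^*f+p_2^*M)\vert_X$. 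Take $D_1=(p_1^*f+p_2^*M)\vert_X$ and $D_2=p_1^*L\vert_X$; these are restrictions to $X$ of base point free classes on $W$, hence base point free. Now $D_2$ is the pullback of $\cO_{\bbP^2}(1)$ under the composite $X\xrightarrow{p_1}\bbF_1\xrightarrow{g}\bbP^2$, a surjective morphism whose general fibre is a conic, so $D_2^2\cdot(-K_X)>0$; and $D_1$ is the pullback of an ample class under the morphism $X\to\bbP^1\times\bbP^2$ assembled from the ruling $\bbF_1\to\bbP^1$ and $p_2$, which a one-point fibre computation shows is birational onto its image, so $D_1^3=3$ and $D_1$ is big. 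For $\gimel(X)=3.19$, let $h\colon X\to Q^3$ be the blow-up of the two non-collinear points $p_1,p_2$, with exceptional divisors $E_1,E_2$ and $H=h^*\cO_{Q^3}(1)$; since $-K_{Q^3}=3H$ we get $-K_X=3H-2E_1-2E_2$. Take $D_1=H-E_1-E_2$ and $D_2=2H-E_1-E_2=(H-E_1)+(H-E_2)$. As $\cO_{Q^3}(1)$ is very ample, each $\vert H-E_i\vert$ is base point free, so $\vert D_2\vert$ is nef and base point free, and $D_2^3=14>0$ (using $H^3=2$, $E_i^3=1$, and the vanishing of the mixed monomials), so $D_2$ is big. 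Non-collinearity of $p_1,p_2$ means precisely that the line $\ell=\overline{p_1p_2}$ lies off $Q^3$; hence the hyperplanes of $\bbP^4$ through $p_1$ and $p_2$ are exactly those through $\ell$, their base locus on $Q^3$ equals $\ell\cap Q^3=\{p_1,p_2\}$, and $h$ resolves it, so $\vert D_1\vert$ is base point free and defines the projection $X\to\bbP^2$ from $\ell$, whose general fibre is a conic, so $D_1^2\cdot(-K_X)>0$. In both cases neither half of the splitting is composed with a pencil, as required.

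The conceptual content is the reduction in the first paragraph, which is immediate from Theorems \ref{Morphisms-Splittings} and \ref{fibration iff Seshadri}; the work lies in the case-by-case verifications. I anticipate the most delicate points to be: fixing the conventions for $\bbP(\cE)$ and for the divisor $X\subset\bbF_1\times\bbP^2$ so that the relevant self-intersections come out visibly nonzero; carrying out the one-point fibre computation identifying the map $X\to\bbP^1\times\bbP^2$ as birational; and checking base-point-freeness of the chosen systems on these specific threefolds, notably $\vert H-E_1-E_2\vert$ in the $3.19$ case, which is exactly where the non-collinearity hypothesis enters.
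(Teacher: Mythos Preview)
Your argument is correct and follows the same overall strategy as the paper: in each of the three cases exhibit an explicit free splitting $-K_X=D_1+D_2$ and show that neither $\vert D_i\vert$ is composed with a pencil of surfaces, then conclude via Theorem \ref{Morphisms-Splittings} and Theorem \ref{fibration iff Seshadri}.

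The concrete splittings you choose, however, differ from the paper's in every case. For $3.31$ the paper takes $D_1=2\xi$, $D_2=\pi^*\cO(1,1)$; you take the symmetric pair $\xi+H_1$, $\xi+H_2$. For $3.19$ the paper (following \cite[(7.11.1)]{MoriMukai1986}) takes $D_1=H$, $D_2=2(H-E_1-E_2)$; you take $H-E_1-E_2$, $2H-E_1-E_2$. For $3.8$ the paper takes $D_1=(2H_1-E)\vert_X$, $D_2=H_2\vert_X$; you regroup as $(p_1^*f+p_2^*M)\vert_X$, $p_1^*L\vert_X$. The verification criteria also differ slightly: the paper checks $D_i^2\cdot A>0$ for some nef $A$ to force numerical dimension $\geq 2$, while you either compute $D_i^3>0$ directly (bigness) or identify $\vert D_i\vert$ with a conic bundle over $\bbP^2$. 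Your approach is a bit more geometric and makes the role of the non-collinearity hypothesis in $3.19$ more transparent, at the cost of the extra birationality check for $X\to\bbP^1\times\bbP^2$ in $3.8$; the paper's computations are more uniform but less illuminating. Both routes are equally valid.
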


\begin{proof}
	\textit{Case 1. $\gimel(X)=3.8$.} Denote by $C$ the exceptional curve of $g$ and set $E=p_1^{-1}(C)$. Let $H_1$ and $H_2$ be the line bundles $p_1^*g^*\cO_{\bbP^2}(1)$ and $p_2\cO_{\bbP^2}(1)$, respectively. Then we have
	\[-K_X=\left(2H_1+H_2-E\right)\vert_X\]
	We set $D_1=(2H_1-E)\vert_X$ and $D_2=H_2\vert_X$. Then $-K_X=D_1+D_2$ is a free splitting such that $\vert D_2\vert$ is not composed with a pencil of surfaces. On the other hand, we have
	\begin{align*}
	D_1^2\cdot (H_1+H_2)\vert_X = 8H_1^2H_2^2+2E^2H_2^2=6>0.
	\end{align*}
	Thus the numerical dimension of $D_1$ is at least $2$. In particular, $\vert D_1\vert$ is not composed with a pencil of surfaces.
	
	\textit{Case 2. $\gimel(X)=3.19$.} Denote by $L$ the line bundle $\cO_{Q^3}(1)$ and by $f\colon X\rightarrow Q^3$ the blow-up. By \cite[(7.11.1)]{MoriMukai1986}, we have a free splitting
	\[-K_X\sim f^*L+2(f^*L-E_1-E_2),\]
	where $E_1$ and $E_2$ are the exceptionl divisors of $f$ over $p$ and $q$, respectively. We set $D_1=f^*L$ and $D_2=2(f^*L-E_1-E_2)$. Then it is easily to see that $\vert D_1\vert$ is not composed with a pencil of surfaces. On the other hand, note that we have
	\[D_2^2\cdot f^*L=4(f^*L)^3=8>0.\]
	Thus the numerical dimension of $D_2$ is at least $2$, and consequently $\vert D_2\vert$ is not composed with a pencil of surfaces.
	
	\textit{Case 3. $\gimel(X)=3.31$.} Denote by $\cE$ the vector bundle $\cO\oplus(1,1)$ over $\bbP^1\times\bbP^1$. Let $\pi\colon \bbP(\cE)\rightarrow \bbP^1\times\bbP^1$ the projection. Set $D_1=\cO_{\bbP(\cE)}(2)$ and $D_2=\pi^*\cO(1,1)$. Then $-K_X\sim D_1+D_2$ is a free splitting. As $\cO(1,1)$ is very ample on $\bbP^1\times\bbP^1$, we see that $\vert D_2\vert$ is not composed with a pencil of surfaces. On the other hand, we have
	\[D_1^2(D_1+3D_2)=(-K_X)^3-3D_1\cdot D_2^2=40>0,\]
	Similarly as above, $\vert D_1\vert$ is not composed with a pencil of surfaces.
\end{proof}

\subsection{Conclusion}

We summarize the main results proved in the previous two subsections in the following theorem. Combining it with Theorem \ref{fibration iff Seshadri} will immediately yield Theorem \ref{Intro Fano threefolds Picard number large Seshadri}.

\begin{thm}
	Let $X$ be a nonsingular Fano threefold.
	\begin{enumerate}
		\item $X$ admits a del Pezzo fibration of degree $2$ if and only if $\gimel(X)\in \{2.2, 2.3, 9.1\}$.
		
		\item $X$ admits a del Pezzo fibration of degree $3$ if and only if $\gimel(X)\in\{2.4, 2.5, 3.2, 8.1\}$.
	\end{enumerate}
\end{thm}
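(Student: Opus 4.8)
The plan is to assemble the case analysis of the two preceding subsections; no genuinely new argument is needed. First I would reduce to the case $\rho(X)\ge 2$: a del Pezzo fibration $X\to B$ has $\dim B=1$, and pulling back an ample divisor on $B$ yields a nef but non-ample (indeed non-big) class on $X$, so $\rho(X)\ge 2$; moreover $B\cong\bbP^1$ because $X$ is rationally connected. Hence a Fano threefold with $\rho(X)=1$ admits no del Pezzo fibration and the statement holds vacuously for those. Note also that the property ``$X$ admits a del Pezzo fibration of degree $d$'' is intrinsic to $X$, so whenever two of the results below both apply to a given family they automatically give the same answer; this reduces the verification to pure bookkeeping.

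For $\rho(X)\ge 2$ I would partition the Mori--Mukai list into the following four (overlapping) classes and invoke the relevant result for each:
\begin{enumerate}
\item[(i)] $X$ satisfies $(\clubsuit)$: apply Proposition~\ref{Intersections-Cases}, which produces a del Pezzo fibration of degree $2$ only for $\gimel(X)=2.3$ and of degree $3$ only for $\gimel(X)\in\{2.4,2.5\}$;
\item[(ii)] $\rho(X)=2$: apply Proposition~\ref{Picard-Number-Two-Cases}, giving degree $2$ exactly for $\gimel(X)\in\{2.2,2.3\}$ and degree $3$ exactly for $\gimel(X)\in\{2.4,2.5\}$;
\item[(iii)] $\rho(X)\ge 6$, or $\gimel(X)\in\{3.27,3.28,4.10,5.3\}$: apply Lemma~\ref{Products-Cases}, so a del Pezzo fibration of degree $d\le 3$ occurs exactly for $X\cong\bbP^1\times S_d$, which has $\gimel(X)=9.1$ when $d=2$ and $\gimel(X)=8.1$ when $d=3$;
\item[(iv)] $\gimel(X)\in\{3.1,3.2,3.3,3.8,3.17,3.19,3.31,4.1\}$: apply the three propositions of the preceding subsection, by which the only family among these carrying a del Pezzo fibration of degree $\le 3$ is $\gimel(X)=3.2$, and that fibration has degree $3$.
\end{enumerate}

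Next I would check that (i)--(iv) exhaust the Mori--Mukai tables with $\rho(X)\ge 2$. By Theorem~\ref{Intersetions-Properties}, the families \emph{not} in class (i) are precisely those whose $\gimel$ lies in the explicit exceptional set appearing there. Running through that set: its nine entries with $\rho=2$ fall into (ii); the entries $3.27,3.28,4.10,5.3$ together with every entry of Picard number $\ge 6$ fall into (iii); and the remaining eight entries are exactly the ones listed in (iv). Hence every Fano threefold with $\rho(X)\ge 2$ lies in at least one of the four classes (with overlaps such as $\gimel(X)\in\{2.3,2.4,2.5\}$, which lies in both (i) and (ii), and $\bbP^1\times S_2$ and $\bbP^1\times S_3$, which lie in (iii)), and by the remark above the answers on the overlaps agree.

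Finally, collecting from (i)--(iv) the families carrying a del Pezzo fibration of degree $2$ gives $\{2.3\}\cup\{2.2,2.3\}\cup\{9.1\}\cup\varnothing=\{2.2,2.3,9.1\}$, and those carrying one of degree $3$ gives $\{2.4,2.5\}\cup\{2.4,2.5\}\cup\{8.1\}\cup\{3.2\}=\{2.4,2.5,3.2,8.1\}$, which is the assertion of the theorem; combined with Theorem~\ref{fibration iff Seshadri} this yields Theorem~\ref{Intro Fano threefolds Picard number large Seshadri}. I do not expect any single deep obstacle here: all the substantive content---the explicit blow-up presentations, the intersection-number and numerical-dimension computations (including the Appendix table)---has already been handled in the preceding propositions, so the one point requiring care is ensuring that the four classes cover the Mori--Mukai classification with no family slipping through and with no inconsistency on the overlaps.
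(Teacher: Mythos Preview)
Your proposal is correct and takes essentially the same approach as the paper: the theorem is stated there as a summary of the preceding two subsections without a separate proof, and your explicit bookkeeping---partitioning the Mori--Mukai list into the classes handled by Proposition~\ref{Picard-Number-Two-Cases}, Proposition~\ref{Intersections-Cases}, Lemma~\ref{Products-Cases}, and the three propositions on the remaining cases, then checking coverage via the exceptional set in Theorem~\ref{Intersetions-Properties}---is exactly the implicit argument. Your opening reduction to $\rho(X)\ge 2$ is a small addition the paper leaves unstated (the context of Section~4 being $\rho\ge 2$), but it is correct and makes the statement self-contained.
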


\begin{appendix}
	
\section*{Appendix}
	
	Let $X$ be a nonsingular Fano threefold with $\rho(X)\geq 3$ satisfying $(\clubsuit)$. If $C\subset Y$ is a complete intersection of members from $\vert L\vert$ such that $X$ is isomorphic to the blow-up of $Y$ along $C$ with natural morphism $f\colon X\rightarrow Y$, then $\vert f^*L-E\vert$ is composed with a pencil of del Pezzo surfaces of degree $d=L^2\cdot (-K_Y-L)$, where $E$ is the exceptional divisor of $f\colon X\rightarrow Y$. In the proof of Proposition \ref{Intersections-Cases}, we claim that we have always $d\geq 4$. In the following, we give the details of the calculation. 
	
	\begin{longtable}{|M{0.5cm}|M{5cm}|M{5cm}|M{0.5cm}|}
		\hline
		$\gimel$ &  $Y$    &   $L$  &  $d$  \\\hline
		
		$3.4$  &  $f\colon Y\rightarrow \bbP^1\times\bbP^2$ is a double cover whose branch locus is a divisor of bidegree $(2,2)$. &   $f^*p_2^*\cO_{\bbP^2}(1)$, where $p_2\colon \bbP^1\times\bbP^2\rightarrow\bbP^2$ is the projection to the second factor  &  $4$ \\\hline
		
		$3.7$  &  $W\subset \bbP^2\times\bbP^2$ a smooth divisor of bidegree $(1,1)$    &   $-\frac{1}{2}K_W$  &  $6$ \\\hline
		
		$3.11$  &  $\pi\colon V_7\rightarrow \bbP^3$ is the blow-up of $\bbP^3$ at a point $p$ with the exceptional divisor $E$  &   $-\frac{1}{2}K_{V_7}$  &  $7$ \\\hline
		
		$3.24$  &  $W\subset \bbP^2\times\bbP^2$ a smooth divisor of bidegree $(1,1)$    &   $\cO_W\otimes \cO_{\bbP^2\times\bbP^2}(0,1)$  &  $8$ \\\hline
		
		$3.26$  &  $\pi\colon V_7\rightarrow \bbP^3$ is the blow-up of $\bbP^3$ at a point $p$ with the exceptional divisor $E$    &   $\pi^*\cO_{\bbP^3}(1)$  &  $9$ \\\hline
		
		$4.4$  &  $\pi\colon Y\rightarrow Q^3$ is the blow-up of $Q^3\subset\bbP^4$ with center two points $x_1$ and $x_2$ on it which are not colinear  with exceptional divisors $E_1$ and $E_2$.      &  $\pi^*\cO_{Q^3}(1)\otimes\cO_Y(-E_1-E_2)$   &  6   \\\hline
		
		$4.9$  &  $f\colon Y\rightarrow\bbP^3$ is obtained by first blowing up along a line $\ell$ and then blowing-up an exceptional line of the first blowing-up &   $f^*\cO_{\bbP^3}(1)$  &  $8$  \\\hline
		
		$5.1$  &  $\pi\colon Y\rightarrow Q^3$ is the blow-up of $Q^3\subset\bbP^4$ three points $x_i$ on a conic on it with exceptional divisors $E_i$ ($1\leq i\leq 3$).    &   $\pi^*\cO_{Q^3}(1)\otimes\cO_Y(-E_1-E_2-E_3)$  &  $5$  \\\hline
	\end{longtable}
	
	\begin{enumerate}
		\item $\gimel(X)=3.4$. Denote by $H$ the line bundle $f^*\cO_{\bbP^1\times\bbP^2}(1,0)$. By ramification formula, we have $K_Y=-H-2L$. It follows
		\[(-K_Y-L)^2\cdot L=(H+L)^2\cdot L=2H\cdot L^2=4.\]
		
		\item $\gimel(X)=3.7$. Here $Y$ is the del Pezzo threefold $W$ of degree $6$. As $L=-1/2K_W$, we obtain
		\[(-K_Y-L)^2\cdot L=-\frac{1}{8}K_Y^3=\frac{1}{8}(-K_W)^3=6.\]
		
		\item $\gimel(X)=3.11$. Here $Y$ is a del Pezzo threefold $V_7$ of degree $7$. As $L=-1/2 K_{Y}$, we have
		\[(-K_{Y}-L)^2\cdot L=-\frac{1}{8}K_Y^3=\frac{1}{8}(-K_{V_7})^3=7.\]
		
		\item $\gimel(X)=3.24$. Denote the line bundles $\cO_{\bbP^2\times\bbP^2}(1,0)$ and $\cO_{\bbP^2\times\bbP^2}(0,1)$ by $H_1$ and $H_2$, respectively. Then $W=H_1+H_2$ and $K_W=(-2H_1-2H_2)\vert_W$ and we have
		\[(-K_W-L)^\cdot L=(2H_1\vert_W+H_2\vert_W)^2\cdot H_2\vert_W=8H_1^2\cdot H_2^2=8.\]
		
		\item$\gimel(X)=3.26$. Here $Y$ is a del Pezzo threefold $V_7$ of degree $7$. As $-K_{V_7}=-4L+2E$, we obtain
		\[(-K_{V_7}-L)^2\cdot L=(3L-2E)^2\cdot L=9L^3=9.\]
		
		\item $\gimel(X)=4.4$. Denote by $H$ the line bundle $\pi^*\cO_{Q^3}(1)$. Then $L=H-E_1-E_2$ and $K_Y=-3H+2E_1+2E_2$. It follows
		\[(-K_Y-L)^2\cdot L=(2H-E_1-E_2)^2\cdot (H-E_1-E_2)=6.\]
		
		\item $\gimel(X)=4.9$. Denote by $E_1$ the strict transform of the exceptional divisor of the first blowi-up and denote by $E_2$ the exceptional divisor of the second blow-up. Then we have $K_Y=-4L+E_1+2E_2$. It follows
		\[(-K_Y-L)^2\cdot L=(3L-E_1-2E_2)^2\cdot L=9L^3+L\cdot E_1^2=8.\]
		
		\item $\gimel(X)=5.1$. Let $H$ be the line bundle $\pi^*\cO_{Q^3}(1)$. Then $L=H-E_1-E_2-E_3$ and $K_Y=-3H+2E_1+2E_2+2E_3$. It follows
		\[(-K_Y-L)^2\cdot L=(2H-E_1-E_2-E_3)^2\cdot(H-E_1-E_2-E_3)=5.\]
	\end{enumerate}
	
\end{appendix}

\def\cprime{$'$}

\renewcommand\refname{Reference}
\bibliographystyle{alpha}
\bibliography{seshadriconstants}

\begin{thebibliography}{CMSB02}

\bibitem[Bro06]{Broustet2006}
Ama\"el Broustet.
\newblock Constantes de {S}eshadri du diviseur anticanonique des surfaces de
  del {P}ezzo.
\newblock {\em Enseign. Math. (2)}, 52(3-4):231--238, 2006.

\bibitem[Bro09]{Broustet2009}
Ama\"el Broustet.
\newblock Non-annulation effective et positivit\'e locale des fibr\'es en
  droites amples adjoints.
\newblock {\em Math. Ann.}, 343(4):727--755, 2009.

\bibitem[CD15]{CasagrandeDruel2015}
Cinzia Casagrande and St\'ephane Druel.
\newblock Locally unsplit families of rational curves of large anticanonical
  degree on {F}ano manifolds.
\newblock {\em Int. Math. Res. Not. IMRN}, (21):10756--10800, 2015.

\bibitem[Cha10]{Chan2010}
Kungho Chan.
\newblock A lower bound on {S}eshadri constants of hyperplane bundles on
  threefolds.
\newblock {\em Math. Z.}, 264(3):497--505, 2010.

\bibitem[CMSB02]{ChoMiyaokaShepherd-Barron2002}
Koji Cho, Yoichi Miyaoka, and Nicholas~I. Shepherd-Barron.
\newblock Characterizations of projective space and applications to complex
  symplectic manifolds.
\newblock In {\em Higher dimensional birational geometry ({K}yoto, 1997)},
  volume~35 of {\em Adv. Stud. Pure Math.}, pages 1--88. Math. Soc. Japan,
  Tokyo, 2002.

\bibitem[CN14]{CasciniNakamaye2014}
Paolo Cascini and Michael Nakamaye.
\newblock Seshadri constants on smooth threefolds.
\newblock {\em Adv. Geom.}, 14(1):59--79, 2014.

\bibitem[Dem92]{Demailly1992}
Jean-Pierre Demailly.
\newblock Singular {H}ermitian metrics on positive line bundles.
\newblock In {\em Complex algebraic varieties ({B}ayreuth, 1990)}, volume 1507
  of {\em Lecture Notes in Math.}, pages 87--104. Springer, Berlin, 1992.

\bibitem[DH17]{DedieuHoering2017}
Thomas Dedieu and Andreas H\"oring.
\newblock Numerical characterisation of quadrics.
\newblock {\em Algebr. Geom.}, 4(1):120--135, 2017.

\bibitem[EKL95]{EinKuechleLazarsfeld1995}
Lawrence Ein, Oliver K\"uchle, and Robert Lazarsfeld.
\newblock Local positivity of ample line bundles.
\newblock {\em J. Differential Geom.}, 42(2):193--219, 1995.

\bibitem[EL93]{EinLazarsfeld1993}
Lawrence Ein and Robert Lazarsfeld.
\newblock Seshadri constants on smooth surfaces.
\newblock {\em Ast\'erisque}, (218):177--186, 1993.
\newblock Journ\'ees de G\'eom\'etrie Alg\'ebrique d'Orsay (Orsay, 1992).

\bibitem[Flo13]{Floris2013}
Enrica Floris.
\newblock Fundamental divisors on {F}ano varieties of index {$n-3$}.
\newblock {\em Geom. Dedicata}, 162:1--7, 2013.

\bibitem[ILP13]{IltenLewisPrzyjalkowski2013}
Nathan~Owen Ilten, Jacob Lewis, and Victor Przyjalkowski.
\newblock Toric degenerations of {F}ano threefolds giving weak
  {L}andau-{G}inzburg models.
\newblock {\em J. Algebra}, 374:104--121, 2013.

\bibitem[IM14]{ItoMiura2014}
Atsushi Ito and Makoto Miura.
\newblock Seshadri constants and degrees of defining polynomials.
\newblock {\em Math. Ann.}, 358(1-2):465--476, 2014.

\bibitem[IP99]{Shafarevich1999}
Vasily~Alexeevich Iskovskikh and Yuri~G. Prokhorov.
\newblock {\em Algebraic geometry. {V}}, volume~47 of {\em Encyclopaedia of
  Mathematical Sciences}.
\newblock Springer-Verlag, Berlin, 1999.

\bibitem[Ito14]{Ito2014}
Atsushi Ito.
\newblock Seshadri constants via toric degenerations.
\newblock {\em J. Reine Angew. Math.}, 695:151--174, 2014.

\bibitem[Kol96]{Kollar1996}
J{\'a}nos Koll{\'a}r.
\newblock {\em Rational curves on algebraic varieties}, volume~32 of {\em
  Ergebnisse der Mathematik und ihrer Grenzgebiete. 3. Folge. A Series of
  Modern Surveys in Mathematics}.
\newblock Springer-Verlag, Berlin, 1996.

\bibitem[Laz04]{Lazarsfeld2004}
Robert Lazarsfeld.
\newblock {\em Positivity in algebraic geometry. {I}}, volume~48 of {\em
  Ergebnisse der Mathematik und ihrer Grenzgebiete. 3. Folge. A Series of
  Modern Surveys in Mathematics}.
\newblock Springer-Verlag, Berlin, 2004.
\newblock Classical setting: line bundles and linear series.

\bibitem[Lee03]{Lee2003}
Seunghun Lee.
\newblock Seshadri constants and {F}ano manifolds.
\newblock {\em Math. Z.}, 245(4):645--656, 2003.

\bibitem[Lee04]{Lee2004a}
Seunghun Lee.
\newblock Linear systems on {F}ano threefolds. {I}.
\newblock {\em Comm. Algebra}, 32(7):2711--2721, 2004.

\bibitem[Liu17]{Liu2017}
Jie Liu.
\newblock Second chern class of {F}ano manifolds and anti-canonical geometry.
\newblock {\em Math. Ann.}, to appear, 2017.
\newblock https://doi.org/10.1007/s00208-018-1702-z.

\bibitem[LZ18]{LiuZhuang2018}
Yuchen Liu and Ziquan Zhuang.
\newblock Characterization of projective spaces by {S}eshadri constants.
\newblock {\em Math. Z.}, 289(1-2):25--38, 2018.

\bibitem[Miy04]{Miyaoka2004}
Yoichi Miyaoka.
\newblock Numerical characterisations of hyperquadrics.
\newblock In {\em Complex analysis in several variables---{M}emorial
  {C}onference of {K}iyoshi {O}ka's {C}entennial {B}irthday}, volume~42 of {\em
  Adv. Stud. Pure Math.}, pages 209--235. Math. Soc. Japan, Tokyo, 2004.

\bibitem[MM81]{MoriMukai1981/82}
Shigefumi Mori and Shigeru Mukai.
\newblock Classification of {F}ano {$3$}-folds with {$B_{2}\geq 2$}.
\newblock {\em Manuscripta Math.}, 36(2):147--162, 1981.

\bibitem[MM83]{MoriMukai1983}
Shigefumi Mori and Shigeru Mukai.
\newblock On {F}ano {$3$}-folds with {$B_{2}\geq 2$}.
\newblock In {\em Algebraic varieties and analytic varieties ({T}okyo, 1981)},
  volume~1 of {\em Adv. Stud. Pure Math.}, pages 101--129. North-Holland,
  Amsterdam, 1983.

\bibitem[MM86]{MoriMukai1986}
Shigefumi Mori and Shigeru Mukai.
\newblock Classification of {F}ano {$3$}-folds with {$B_2\geq 2$}. {I}.
\newblock In {\em Algebraic and topological theories ({K}inosaki, 1984)}, pages
  496--545. Kinokuniya, Tokyo, 1986.

\bibitem[MM03]{MoriMukai2003}
Shigefumi Mori and Shigeru Mukai.
\newblock Erratum: ``{C}lassification of {F}ano 3-folds with {$B_2\geq 2$}''
  [{M}anuscripta {M}ath. {\bf 36} (1981/82), no. 2, 147--162; {MR}0641971
  (83f:14032)].
\newblock {\em Manuscripta Math.}, 110(3):407, 2003.

\bibitem[Nak05]{Nakamaye2005}
Michael Nakamaye.
\newblock Seshadri constants at very general points.
\newblock {\em Trans. Amer. Math. Soc.}, 357(8):3285--3297, 2005.

\end{thebibliography}

\end{document}